\newtheorem{dummy}{dummy}[section]
\newtheorem{lemma}[dummy]{Lemma}
\newtheorem{theorem}[dummy]{Theorem}
\newenvironment{customthm}[1]
{\innercustomthm}
  {\endinnercustomthm}
\newenvironment{customcor}[1]
{\innercustomcor}
  {\endinnercustomcor}
\newtheorem{corollary}[dummy]{Corollary}
\newtheorem{proposition}[dummy]{Proposition}
\theoremstyle{definition}
\newtheorem{definition}[dummy]{Definition}
\newtheorem*{definition*}{Definition}
\newtheorem{example}[dummy]{Example}
\newtheorem{remark}[dummy]{Remark}
\newcommand{\bA}{\mathbb{A}}
\newcommand{\bC}{\mathbb{C}}
\newcommand{\bG}{\mathbb{G}}
\newcommand{\bN}{\mathbb{N}}
\newcommand{\bP}{\mathbb{P}}
\newcommand{\bQ}{\mathbb{Q}}
\newcommand{\bZ}{\mathbb{Z}}
\newcommand{\cA}{\mathcal{A}}
\newcommand{\cB}{\mathcal{B}}
\newcommand{\cC}{\mathcal{C}}
\newcommand{\cD}{\mathcal{D}}
\newcommand{\cF}{\mathcal{F}}
\newcommand{\cI}{\mathcal{I}}
\newcommand{\cO}{\mathcal{O}}
\newcommand{\cP}{\mathcal{P}}
\newcommand{\cS}{\mathcal{S}}
\newcommand{\cT}{\mathcal{T}}
\newcommand{\cU}{\mathcal{U}}
\newcommand{\Spec}{\mathrm{Spec}\,}
\newcommand{\Hom}{\mathrm{Hom}}
\newcommand{\Perf}{\mathrm{Perf}}
\newcommand{\SCat}{\mathrm{Cat}^{\mathrm{perf}}_\infty}
\newcommand{\Div}{\mathrm{Div}}
\newcommand{\gp}{\mathrm{gp}}
\newcommand{\Qcoh}{\mathrm{Qcoh}}
\newcommand\radice[2][\relax]{\hspace{-1.5pt}\sqrt[\uproot{2}#1]{#2}}
\newcommand{\Coh}{\mathrm{Coh}}
\newcommand\Db[1]{\Perf(#1)}
\newcommand*\cocolon{%
        \nobreak
        \mskip6mu plus1mu
        \mathpunct{}%
        \nonscript
        \mkern-\thinmuskip
        {:}%
        \mskip2mu
        \relax
}
\newlength\shlength
\newcommand\xshlongvec[2][0]{\setlength\shlength{#1pt}%
  \stackengine{-5.6pt}{$#2$}{\smash{$\kern\shlength%
    \stackengine{7.55pt}{$\mathchar"017E$}%
      {\rule{\widthof{$#2$}}{.57pt}\kern.4pt}{O}{r}{F}{F}{L}\kern-\shlength$}}%
      {O}{c}{F}{T}{S}}
\begin{document}

\author[Scherotzke]{Sarah Scherotzke}
\address{Sarah Scherotzke, 
	Universit\'e du Luxembourg\\
	Maison du Nombre\\
	6, Avenue de la Fonte\\
	L-4364 Esch-sur-Alzette\\ Luxembourg}
\email{\href{mailto:sarah.scherotzke@uni.lu}{sarah.scherotzke@uni.lu}}

\author[Sibilla]{Nicol\`o Sibilla}
\address{Nicol\`o Sibilla, SMSAS\\ 
	University of Kent\\ 
	Canterbury, Kent CT2 7NF\\UK and SISSA\\ Via Bonomea 265 \\34136 Trieste (TS) \\Italy}
\email{\href{mailto:N.Sibilla@kent.ac.uk}{N.Sibilla@kent.ac.uk}}

\author[Talpo]{Mattia Talpo}
\address{Mattia Talpo, Dipartimento di Matematica\\ Universit\`{a} di Pisa \\ Largo Bruno Pontecorvo 5 \\ 56127 Pisa (PI) \\ Italy}
\email{\href{mailto:mattia.talpo@unipi.it}{mattia.talpo@unipi.it}}

\title[Parabolic semi-orthogonal decompositions 
and Kummer flat invariants]{Parabolic semi-orthogonal decompositions and Kummer flat  invariants of log schemes}

\subjclass[2010]{14F05, 14C15, 19L10}
\keywords{Log schemes, root stacks, semi-orthogonal decompositions, noncommutative motives, algebraic K-theory}

\begin{abstract} 
 We construct semi-orthogonal decompositions on   triangulated categories of parabolic sheaves on certain kinds of logarithmic schemes. This provides a categorification of   the decomposition theorems in Kummer flat K-theory due to Hagihara and Nizio{\l}. Our techniques allow us to generalize Hagihara and Nizio{\l}'s  results to a much larger class of invariants in addition to K-theory, and also to extend them to more general logarithmic  stacks. 
 \end{abstract}

\maketitle

\tableofcontents

\section{Introduction} 
 In this paper we carry forward the study of the derived category of parabolic sheaves we initiated   in \cite{scherotzke2016logarithmic}, where we established the Morita invariance of   parabolic sheaves under logarithmic blow-ups. Our main result is the construction of a special kind of  semi-orthogonal decompositions on derived categories of parabolic sheaves. This provides in particular a   categorification of  structure theorems for the Kummer flat  K-theory of log  schemes  
due to Hagihara \cite{hagihara}   and Nizio{\l} \cite{Ni1}. Additionally we generalize Hagihara  and Nizio{\l}'s result in two ways:
\begin{itemize}
\item  we obtain  uniform structure theorems which hold across all (Kummer flat) invariants of logarithmic  schemes, including Hochschild 
and cyclic homology;
\item our techniques  allow us to extend these 
results  to a much larger class of log  schemes (and log stacks)  than those considered by Hagihara  and Nizio{\l}.
\end{itemize}
Our main results, Theorem \ref{main1}, Theorem \ref{main2} and Corollary \ref{corC}, hold over an arbitrary ground ring.

\subsection{Parabolic sheaves, log schemes, and infinite root stacks}
\emph{Parabolic sheaves} were defined by Mehta and Seshadri in the 80's, for Riemann  surfaces with marked points, as coherent sheaves equipped with flags at the marked points. They 
are the key ingredient to extend  the Narasimhan--Seshadri correspondence to the non-compact setting. The theory over the last fifty years has undergone massive generalizations. It was extended first to pairs $(X, D)$ where $D$ is a normal crossing divisor in any dimension, and more recently to an even broader class of \emph{logarithmic schemes}. The work of Borne, Vistoli and the third author shows that parabolic structures are best viewed within the framework of log geometry \cite{borne-vistoli}, \cite{TV}, 
\cite{talpo2014moduli}  and this is the perspective that we will adopt throughout the paper.

Logarithmic (log)  geometry emerged in the 80's through the collective efforts of several authors  including Deligne, Faltings, Fontaine, Illusie and Kato \cite{kato}. The theory was initially designed 
for applications to arithmetic geometry, but over the last twenty years it has become  a key organizing principle in areas as diverse as algebraic geometry, symplectic geometry,   and homotopy theory. 
Log geometric techniques lie at the core of the Gross--Siebert program  in mirror symmetry \cite{gross2006mirror}, and feature prominently in recent approaches to 
Gromov-Witten theory, see \cite{GS} and references therein. 

One of the main hurdles in working with log schemes is that they encode both classical geometric 
and combinatorial data. For this reason 
transporting familiar geometric constructions to the log setting is often delicate: see for instance \cite{Ol}, \cite{hagihara2016structure} for the definition of the cotangent complex and the Chow groups of log schemes. A definition of K-theory for log schemes  was first proposed by Hagihara \cite{hagihara}   and Nizio{\l} \cite{Ni1}. We will refer to it as  \emph{Kummer flat (resp. \'etale)  K-theory}, and it is  the algebraic K-theory of the Kummer flat (resp.  \'etale) topos,  a logarithmic analogue  
of the classical flat (resp.  \'etale) topos.

Our main result is a construction of infinite   semi-orthogonal decompositions    on categories of parabolic sheaves. 
This can be viewed as a categorification of an important structure theorem due to Hagihara and Nizio{\l} for Kummer flat K-theory: if $X$ is a regular scheme   equipped with a simple normal crossings divisor $D \subset X$,   the Kummer flat   K-theory of $(X,D)$ splits as an an explicit  direct sum  indexed by the strata of $D$ \cite[Theorem 1.1]{Ni1}. We will show that our methods yield, in particular, substantial generalizations of Hagihara and Nizio{\l}'s results. Before stating our main result we review its two key ingredients: the \emph{infinite root stack}, and \emph{semi-orthogonal decompositions}.

\subsubsection*{\textbf{Infinite root stacks}} 
 The infinite root stack of a log scheme 
 was 
introduced in \cite{TV}: it is a limit of tame Artin 
stacks (Deligne--Mumford in characteristic $0$) which encodes 
log information as stacky data.  
The infinite root stack captures the geometry of 
the underlying log scheme, and this point of view informs 
several recent works by the authors and their collaborators  
\cite{CSST, scherotzke2016logarithmic, TaV, talpo2017parabolic}, 
see also  \cite{sala2017hall} for recent applications  
to Hall algebras and 
quantum groups. 
If $X$ is a log scheme, we denote 
its infinite root stack by $\radice[\infty]{X}$. One of the key properties of the infinite root stack is that the Kummer flat topos of $X$ is  equivalent, as a ringed topos, to the fppf topos of  $\radice[\infty]{X}$ \cite[Theorem 6.16]{TV}.  In particular  Hagihara and Nizio{\l}'s logarithmic algebraic K-theory of $X$ coincides with the ordinary algebraic K-theory of the infinite root stack $\radice[\infty]{X}.$ Thus we can study logarithmic algebraic K-theory by probing the geometry and the sheaf theory of infinite root stacks.  

\subsubsection*{\textbf{Semi-orthogonal decompositions}}
We view the algebraic K-theory of a stack as an invariant of its $\infty$-category of perfect complexes.  We study the category of perfect complexes of the infinite root stack, and show that Nizio{\l}'s direct sum decomposition of  Kummer flat K-theory   is the shadow of a factorization that holds directly at the categorical level. The appropriate concept of factorization for categories 
is given by semi-orthogonal decompositions (\emph{sod}-s, for short). These were   
introduced by Bondal and Orlov in \cite{bondal1990representable}.   
In the setting of $\infty$-categories, semi-orthogonal decompositions (of length two) were considered in  \cite{BGT} under the name of \emph{split-exact sequences} of 
$\infty$-categories. 
\subsubsection*{\textbf{The main theorem}}
Recent work of Ishii and Ueda \cite{ishii2011special}   and Bergh, Lunts, and Schn\"urer \cite{bergh2016geometricity} shows that the categories of perfect complexes of  finite root stacks  of pairs  $(X,D),$ where $X$ is a scheme (or stack) equipped with a simple normal crossings divisor $D \subset X,$ admit a canonical sod where the summands are labelled by the strata of $D.$   The infinite root stack is the limit of all finite root stacks, however the pull-back functors along root maps do not preserve  the canonical sod-s. This issue can be obviated via a recursion that gives rise a sequence of nested sod-s as the root index grows, and that ultimately yields an infinite sod  on $\Perf(\radice[\infty]{X})$.

Below we formulate our first main result   
for log scheme of the form $(X,D),$ where $X$ is a scheme and $D$ is a simple normal crossings divisor. We refer the reader to  Theorem \ref{mainsgncdiv} in the main text for  a sharper and  more general statement, that applies for instance to all finite type tame algebraic stacks.  Let $\{D_i\}_{i \in I}$ be the set of irreducible components of $D$. The divisor $D$ determines a  stratification of $X$ where strata are intersections of the irreducible components of $D$. Strata are in bijection with the subsets of $I$: if  $S$ is a stratum  $S=\cap_{j \in J} D_j$ for some $J \subset I$, and $\overline{S}$ is the closure, we set $|\overline{S}|:=|J|$. 
If $N$ is a natural number 
we set 
$
(\mathbb{Q}/\mathbb{Z})^{N,*} := (\mathbb{Q}/\mathbb{Z}\setminus \{0\})^N $.
\begin{customthm}{A}[Theorem \ref{mainsgncdiv}]
\label{main1}
The $\infty$-category of perfect complexes 
of the infinite root stack 
$\radice[\infty]{(X,D)}$ admits a semi-orthogonal decomposition 
$$
\Perf(\radice[\infty]{(X,D)}) = \langle   \cA_{\overline S},   \overline{S} \in S_D   \rangle 
$$
such that all objects in $ \cA_{\overline S}$ are supported on 
$\overline S$. 
Additionally, for all $\overline S$ the category $\cA_{\overline S}$ carries a semi-orthogonal decomposition 
indexed by $(\mathbb{Q}/\mathbb{Z})^{|\overline{S}|,*}$ whose factors are equivalent to $\Perf(\overline{S}).$ 
\end{customthm}

\subsection{\textbf{Additive invariants}}
Theorem \ref{main1} recovers in particular Hagihara and Nizio{\l}'s results, but is a much stronger statement. In order to clarify this point let us refer to the notion of   \emph{additive  invariants} of $\infty$-categories. Let us denote by  $\SCat$ the $\infty$-category of stable $\infty$-categories.  
A functor 
$ 
\mathrm{H}\colon \SCat \to \cP 
 $ , where $\cP$ is a stable presentable $\infty$-category,
is an \emph{additive invariant} if it preserves zero objects and filtered colimits, and it maps \emph{split exact sequences} to cofiber sequences (split exact sequences are the analogue in the $\infty$-setting of a  sod with two factors). Most homological invariants of algebras and categories are  additive: algebraic K-theory and non-connective K-theory, (topological) Hochschild homology and negative cyclic homology are all additive invariants.

The theory of \emph{non-commutative motives}  was developed by Tabuada  and others \cite{tabuada2008, cisinski2011non, BGT, robalo2015k} in analogy with the classical theory of motives. Non-commutative (additive) motives encode the universal additive invariant, exactly as classical motives are universal among Weil cohomologies. Noncommutative motives form a  presentable and stable $\infty$-category 
$ \,  
\mathrm{Mot}^{\mathrm{add}}$  
which is  the recipient  of  
the universal additive invariant  
$$
\cU\colon \SCat \longrightarrow \mathrm{Mot}^{\mathrm{add}}.      
$$
Every additive invariant $\, \mathrm{H}\colon \SCat \to \cP$  factors uniquely as a composition $$
 \xymatrix{
 \SCat \ar[r]^-{\mathrm{H}} \ar[d]_-{\cU} & \cP \\ 
 \mathrm{Mot}^{\mathrm{add}} \ar[ur]_-{\mathrm{\overline H}}
 }
 $$
If $\mathrm{H}$ is an additive  invariant  and $X$ is a stack  we set 
$ \,  
\cU(X):=\cU(\Perf(X)), 
\;
\mathrm{H}(X):=\mathrm{H}(\Perf(X)) $. 
Let $(X,D)$ be a scheme equipped with a normal crossings divisor. Then Theorem \ref{main1} yields   a 
direct product decomposition of the  non-commutative motive of $\radice[\infty]{X}$.
\begin{customthm}{B}[Corollary \ref{ncmotdirectsum}]
\label{main2}
There is a canonical direct sum decomposition  
$$
\cU (\radice[\infty]{(X,D)}) \simeq 
\bigoplus_{\overline{S} \in S_D} \Big (
\bigoplus_{(\mathbb{Q}/\mathbb{Z})^{|\overline{S}|,*}} \cU(\overline{S}) \Big ).
$$
\end{customthm}
The K-theory of root stacks was also studied in \cite{dhillon2015g}, although from a different perspective. 
\subsubsection*{\textbf{Kummer \'etale K-theory}}
Theorem \ref{main2} implies uniform direct product decompositions across all  additive invariants   
of infinite root stacks, and recovers in particular  Hagihara and Nizio{\l}'s structure theorems for Kummer flat K-theory.   
Let $X$ be a log scheme, and denote $X_{\mathrm{Kfl}}$ 
its Kummer flat topos. Let $\Perf(X_{\mathrm{Kfl}})$ be its $\infty$-category of perfect complexes. 
If $\mathrm{H}$ is an additive invariant, we set
$ \, 
\mathrm{H}_{\mathrm{Kfl}}(X):= \mathrm{H}(\Perf(X_{\mathrm{Kfl}})) \,$.  When $\mathrm{H}(-) = K(-)$ is  algebraic K-theory, this definition  recovers  Hagihara and Nizio{\l}'s Kummer flat K-theory. 

Work of Vistoli and the third author \cite{TV} 
identifies the Kummer 
flat topos with the ``small fppf topos'' of the infinite root stack. As a consequence, under suitable assumptions, there is an equivalence of stable $\infty$-categories
$
\Perf(X_{\mathrm{Kfl}}) \simeq \Perf(\radice[\infty]{X}). 
$
This, together with 
 Theorem \ref{main2}, yields the following immediate  
corollary. 
\begin{customcor}{C}
\label{corC}
If $\mathrm{H}\colon \SCat \to \cP$ is an additive  invariant then there is a direct sum decomposition 
$$
\mathrm{H}_{\mathrm{Kfl}}(X,D) \simeq 
\bigoplus_{\overline{S} \in S_D} \Big (
\bigoplus_{(\mathbb{Q}/\mathbb{Z})^{|\overline{S}|,*}} \mathrm{H}(\overline{S}) \Big ). 
$$
In particular, the Kummer flat  
K-theory of $(X,D)$ decomposes as a direct sum of spectra 
$$
K_{\mathrm{Kfl}}(X,D) \simeq 
\bigoplus_{\overline{S} \in S_D} \Big (
\bigoplus_{(\mathbb{Q}/\mathbb{Z})^{|\overline{S}|,*}} K(\overline{S}) \Big ). 
$$
\end{customcor}
The second half of the statement recovers the first part of Nizio{\l}'s  \cite[Theorem 1.1]{Ni1} (see Remark \ref{rmk:ket} for some comments about the second part). Nizio{\l}'s result holds under the  restrictive assumption that 
$(X, D)$ is a log smooth pair given by a regular scheme $X$ and a simple normal crossings divisor $D$. Corollary \ref{corC} holds with milder   smoothness assumption on $X$: indeed in the main body of the paper we work with a finite type algebraic stacks $X$  equipped with a simple normal crossings divisor $D$. In particular $X$ needs not be  regular outside of $D$.

In fact  we can  extend the the decomposition  given by Corollary \ref{corC} to even more general log stacks. We clarify this by  explaining three applications of our techniques. They  require working over a field $\kappa$ of characteristic zero. Additionally for 
the second one we need  to assume  $\kappa=\mathbb{C}$. 
\begin{itemize}[leftmargin=*]
\item \textbf{\emph{General normal crossing divisors}}. We extend the decomposition of Theorem \ref{main1} and Corollary \ref{corC} to \emph{general normal crossing divisors}, removing the simplicity assumption required by Hagihara and Nizio{\l}. An 
interesting new feature emerges in this setting. The  semi-orthogonal summands appearing in the analogue of Theorem \ref{main1} for general normal crossing log stacks $(X,D)$ are no longer equivalent to  the category of perfect complexes on the strata: instead, 
they are equivalent to perfect complexes on the \emph{normalization  of the strata} (see Theorem \ref{mainncdivsod} in the main text).   This is reflected by the   Kummer flat K-theory of a general normal crossing log stack: it  breaks up as a direct sum indexed by the strata, where the summands compute the K-theory of the normalization of the strata (see Theorem \ref{ncmotdirectsum1}).

\item \textbf{\emph{Simplicial log structures}}. We  generalize Corollary \ref{corC} to log smooth schemes with \emph{simplicial log structure}, i.e. pairs 
$(X, D)$ where $D$ 
is a divisor with \emph{simplicial singularities}.   
The decomposition formula holds only for the complexification of $K_{\mathrm{Kfl}}(X),$ and under suitable additional  assumptions on $(X, D).$ 
The main differences with the normal crossings case  
is that the formula  has additional summands keeping track of the singularities of $D$, and that   it depends on the \emph{G-theory}, rather than  the K-theory, of the strata. As the statement is somewhat technical   we do not include it in this  introduction, but refer the reader directly to Proposition \ref{propKKalg} in the main text.  
\item \textbf{\emph{Logarithmic Chern character.}} Having at our disposal a general definition of additive invariants of log schemes, we  introduce a construction of the \emph{logarithmic Chern character}. The availability of structure theorems valid across all additive invariants of log schemes allows us to  study some of its fundamental properties. This includes a 
Grothendieck--Riemann--Roch statement, which  we establish in the restrictive setting of strict maps of log schemes, leaving generalizations to future work. 
These results are contained in Section \ref{logchchar} of the text.  
\end{itemize} 

\subsection{\textbf{Towards logarithmic DT invariants}}
Donaldson--Thomas invariants are part of the rich array of enumerative invariants inspired by string theory.  One of the outstanding open questions in the area is to construct  a theory of log DT invariants, analogous to   the theory of log GW invariants developed in \cite{GS, C, AC}. This would have applications to  degeneration formulas  for DT invariants.  DT theory counts \emph{Bridgeland stable} objects in triangulated categories.  Thus building a theory of log DT invariants requires, first,  
 to introduce a viable concept of  derived category for log schemes; 
and, second, to define and study stability conditions over it. 
 For the first requirement, one of the viable options is to try to use parabolic sheaves on  $(X,D)$, which in turn are equivalent to sheaves on  $\radice[\infty]{X}$. 
Thus a first step towards defining log DT invariants consists in constructing \emph{Bridgeland stability condition} on $\Db{\radice[\infty]{X}}$. Results obtained in \cite{collins2010gluing} give a means to  glue  stability conditions  across semi-orthogonal decompositions. Adapting these techniques to the 
sod-s on $\Db{\radice[\infty]{X}}$ obtained in  Theorem \ref{main1}, we can already obtain stability conditions in some cases, such as many  toric log pairs $(X,D)$.  
It is too early to tell wether they will be relevant from the viewpoint of log DT theory, but this seems an interesting avenue for future investigation. 

\subsection{Relation to work of other authors}
Several different approaches to the definition of log motives 
and log invariants have been considered in the literature.
\begin{itemize}[leftmargin=*]
\item A definition of log motives has been proposed in  \cite{ito2017log}, and in  \cite{howell}. 
\item Constructions of Hochschild homology and topological Hochschild homology in the log setting have been proposed by  Hesselholt and Madsen \cite{hesselholt2003k} Rognes, Sagave, and Schlichtkrull \cite{rognes2015localization}, Leip \cite{leip} and Olsson \cite{ollsonhh}.
\end{itemize}
 It would be very interesting to compare these approaches to the one  we pursue in this paper, however there is a key difference in perspective between 
 some of these works and our own. The  constructions of log 
 Hochschild homology considered in \cite{hesselholt2003k}, \cite{rognes2015localization}, and \cite{leip} are closely related to the log de Rham complex, and therefore to the cohomology of the \emph{complement} of the locus where the log structure is concentrated. In this paper, on the other hand, we investigate log schemes through the lenses of their \emph{Kummer flat} topos and their infinite root stack.

\subsection*{Acknowledgments:} We thank Daniel Bergh, David Carchedi, Olaf Schn\"urer, Greg Stevenson and Angelo Vistoli for their interest in this project and for useful exchanges, and the anonymous referee for useful comments. M.T. was partially supported by EPSRC grant EP/R013349/1.

\subsection*{Conventions}

We work over an arbitrary noetherian commutative ring $\kappa$ (that could be the ring of integers $\bZ$). In later parts of the paper (Sections \ref{sec:simplicial.applications} and \ref{logchchar}) we will impose more restrictive assumptions on $\kappa$. All algebraic stacks (in the sense of \cite[Tag 026O]{stacks-project}) will be of finite type over $\kappa$. All monoids will be commutative and ``toric'', i.e. finitely generated, sharp, integral and saturated.

\section{Preliminaries}
\label{sec:outline}

\subsection{Log structures from boundary divisors and root stacks}\label{sec:logstr}

In this section we briefly recall how certain boundary divisors $D$ on a scheme or algebraic stack $X$ give rise to log structures and root stacks. 

 \begin{definition}\label{def:nc}
Let $X$ be a scheme over $\kappa$, and $D\subset X$ an effective Cartier divisor. Recall that the divisor $D$ is: \begin{itemize}
\item \emph{simple normal crossings} if for every $x\in D$ the local ring $\cO_{X,x}$ is regular, and there is a system of parameters $a_1,\hdots, a_n \in \cO_{X,x}$ such that the ideal of $D$ in $\cO_{X,x}$ is generated by $a_1,\hdots, a_k$ for some $1\leq k\leq n$, and
\item \emph{normal crossings} if every $x\in D$ has an \'etale neighbourhood where $D$ becomes simple normal crossings.
\end{itemize}
 \end{definition}
 
 Note that if $D$ is a normal crossings divisor on $X$, then every point of $D$ is a regular point of $X$, but away from $D$ the scheme $X$ can well be singular.

If $X$ is an algebraic stack and $D\subset X$ is an effective Cartier divisor, we say that $D$ is {(simple) normal crossings} if the pull-back of $D$ to some smooth presentation $U\to X$, where $U$ is a scheme, is a (simple) normal crossings divisor on $U$.

\begin{remark}
If $\kappa$ is a field, then the divisor $D\subset X$ is normal crossings if and only if \'etale locally around every point $x \in D$, the pair $(X,D)$ is isomorphic to the pair $(\bA^n,\{x_1\cdots x_k=0\})$ for some $n\in \bN$ and $0\leq k \leq n$.

This is not necessarily true if $\kappa$ is not a field. For example, if $\kappa=\bZ$ the divisor $V(p)\cup V(x)$ for a prime number $p$ is simple normal crossings on $X=\bA^1=\Spec \bZ[x]$, but X is not \'etale locally isomorphic to $\bA^2$ around the point $(p,x)$.
\end{remark}

Later on (Section \ref{reducetoncviacan}) we will consider a generalization of this notion, where the divisor $D$ is allowed to have \emph{simplicial singularities}. Over a field $\kappa$ one can define this by asking that \'etale locally around every point $x \in D$, the pair $(X,D)$ is isomorphic to the pair $(\Spec \kappa[P] \times \bA^n,\Delta_P\times \bA^n)$ for some simplicial monoid $P$ and $n\in \bN$. Recall that a sharp fine saturated monoid $P$ is \emph{simplicial} if the extremal rays of the rational cone $P_\bQ\subset P^\gp\otimes_\bZ \bQ$ are linearly independent, and we denote by $\Delta_P$ the toric boundary in the affine toric variety $\Spec \kappa[P]$, i.e. the complement of the torus $\Spec \kappa[P^\gp]$, equipped with the reduced subscheme structure. This weaker notion allows for some kinds of singularities along the divisor $D$ itself.

It is not clear to us how to formulate this notion in the ``absolute'' case (i.e. for schemes over $\Spec \bZ$), so we will circumvent this problem by using a canonically defined root stack of a log scheme with simplicial log structure, and reducing to Definition \ref{def:nc} on this root stack (see Definition \ref{def:simpl.sing}).

Next we recall how ((simple) normal crossings) divisors induce associated log structures and root stacks. We focus on the construction of root stacks, since the full formalism of logarithmic geometry will not play an important role in the paper. We refer the reader to \cite{ogus} for an extensive introduction on log geometry, and to the appendix of \cite{CSST} for a quick overview of the basic concepts. For more on root stacks, the reader can consult \cite{cadman, borne-vistoli, TV, talpo2014moduli}.

Any effective Cartier divisor $D$ 
in a scheme $X$ induces a log structure, usually called the \emph{compactifying log structure} of the open embedding $X\setminus D\subset X$, as follows. The sheaf
$
M_D=\{f\in \cO_X\mid f|_{X\setminus D}\in \cO_{X\setminus D}^\times\}
$
is a sheaf of submonoids of $\cO_X$ (seen as a sheaf on the small \'etale site of $X$), where the monoid operation is multiplication of regular functions. The inclusion $\alpha\colon M_D\to \cO_X$ gives rise to a log structure on $X$. If $D$ is normal crossings, this log structure admits local charts, and in fact will also be fine and saturated. Morally, the sheaf $M_D$ keeps track of how many branches of $D$ intersect at a point of $X$, and how their local equations fit together in the local ring $\cO_{X,x}$ (more precisely, in its strict henselization, since we are using the \'etale topology).
 
If $X$ is a stack rather than a scheme, the above procedure gives a log structure on any given smooth presentation of $X$, and descent for fine log structures \cite[Appendix]{Ols} gives a fine saturated log structure on $X$ itself. We will denote the resulting log scheme or stack by $(X,D)$.
 
\subsubsection{{Root stacks along a single regular divisor}}\label{sec:smooth}

Assume that $D\subset X$ is a simple normal crossings divisor with only one component, i.e. a regular connected divisor, and $X$ is an algebraic stack. Recall that the quotient stack $[\bA^1/\bG_m]$ functorially parametrizes pairs $(L,t)$ where $L$ is a line bundle and $t$ is a global section of $L$.

As specified above, the effective Cartier divisor $D$ on 
$X$ induces a canonical log structure, that in this simple case can be described as follows: the divisor $D$ determines a line bundle $\cO_X(D)$  on 
$X$ together with a section $\sigma \in \Gamma(\cO_X(D))$ having  
$D$ as its zero locus, and this yields a  tautological morphism
$
s\colon X \longrightarrow [\bA^1/ \mathbb{G}_m]. 
$
This equips $X$ with a log structure by pulling back the canonical log structure of $[\bA^1/ \mathbb{G}_m]$ (corresponding to the regular divisor $[\{0\}/\bG_m]\subset [\bA^1/\bG_m]$) via $s$. 

In this case, for $r\in \bN$ the $r$-th root stack of the pair $(X,D)$ can be described as the functor that associates to a scheme $T\to X$ over $X$ the groupoid $\radice[r]{(X,D)}(T)$ of pairs $(L,t)$ consisting of a line bundle on $T$ with a global section, and with an isomorphism $(\cO_X(D), \sigma)|_T\cong (L^{\otimes r},t^{\otimes r})$.

The stack $\radice[r]{(X,D)}$ fits in a fiber square 
\begin{equation}
\begin{gathered}
\label{rootsquare}
\xymatrix{
\radice[r]{(X,D)} \ar[r] \ar[d]_-{g_{r,1}} & [\bA^1/ \mathbb{G}_m] \ar[d]^-{(-)^ r}\\ 
X \ar[r]^-s & [\bA^1/ \mathbb{G}_m]
}
\end{gathered}
\end{equation}
where $(-)^r$ is induced by the $r$-th power maps on $\bA^1$ and on $ \mathbb{G}_m$, or equivalently is the functor sending a pair $(L,t)$ of a line bundle with a global section to the pair $(L^{\otimes r}, t^{\otimes r})$. The reason for the notation $g_{r,1}$ will be apparent later (see Section \ref{sec:irs}). The construction of the stack $\radice[r]{(X,D)}$ is compatible with respect to pull-back along smooth morphisms towards $X$ (in particular with respect to Zariski and \'etale localization on $X$), i.e. if $Y\to X$ is smooth, we have a canonical isomorphism $\radice[r]{(Y,D|_Y)}\simeq \radice[r]{(X,D)}\times_X Y$.

We denote by $D_r$ the effective Cartier divisor on $\radice[r]{(X,D)}$ obtained by taking the reduction of the 
closed substack $g_{r,1}^{-1}(D) \subset \radice[r]{(X,D)},$ and we denote 
$ 
i_r\colon D_r \to\radice[r]{(X,D)}
$ the inclusion. We will refer to $D_r$ as the \emph{universal effective Cartier divisor} on $\radice[r]{(X,D)}$, since it is the universal $r$-th root of the divisor $D$ on $X$.

Consider the commutative (non-cartesian) diagram
\begin{equation}
\begin{gathered}
\label{rootd}
\xymatrix{
\mathscr{B}\mathbb{G}_m \ar[d]_-{(-)^r} \ar[r] & [\mathbb{A}^1/\mathbb{G}_m] 
\ar[d]^-{(-)^r} \\ 
\mathscr{B}\mathbb{G}_m \ar[r] & [\mathbb{A}^1/\mathbb{G}_m] 
}
\end{gathered}
\end{equation}
where the left vertical arrow is induced by the $r$-th power map
$
(-)^r\colon \mathbb{G}_m \longrightarrow \mathbb{G}_m. 
$
  As explained in \cite{bergh2016geometricity}, $D_r$ can be defined equivalently as the top left vertex of the base change of diagram 
  (\ref{rootd}) along the tautological map 
  $s\colon X \to [\mathbb{A}^1/\mathbb{G}_m]$: in particular, there is a fiber product 
   \[ \xymatrix{ 
D_r \ar[r] \ar[d]_-{f_{r,1}} & \mathscr{B} \mathbb{G}_m \ar[d]^{(-)^r}\\
   D  \ar[r] & \mathscr{B} \mathbb{G}_m.  }
        \] 
This implies that $D_r \to D$ is a $\mu_r$-gerbe.

Zariski locally on $X$, where the line bundle $\cO_X(D)$ is trivial, the stack $\radice[r]{(X,D)}$ admits the following explicit description: assume also that $X=\Spec A$ is affine, and let $f\in A$ correspond to the section $\sigma$ of $\cO_X(D)$ (so that $D$ has equation $f=0$). Then we have an isomorphism
$$
\radice[r]{(X,D)}\simeq [\Spec ( A[x]/(x^r-f))\, /\, \mu_r]
$$
where $\mu_r$ acts by multiplication on $x$. The divisor $D_r\subset \radice[r]{(X,D)}$ is given by the global equation $x=0$, and is therefore isomorphic to $[\Spec (A/f) /\mu_r]\simeq D\times \mathscr{B}\mu_r$.

\subsubsection{Root stacks along a simple normal crossings divisor}\label{sec:preliminaries.NC}

Assume now that $D$ is a simple normal crossings divisor on $X$, and denote by $D_1,\hdots, D_N$ the irreducible components of $D$. In this case root stacks of $(X,D)$ are indexed by elements of $\bN^N$. For $\vec{r}=(r_1,\hdots, r_N)\in \bN^N$, the root stack $\radice[\vec{r}]{(X,D)}$ parametrizes tuples $((L_1,t_1),\hdots, (L_N,t_N))$, where $(L_i,t_i)$ is a $r_i$-th root of $(\cO_X(D_i),\sigma_i)$. Each pair $(\cO_X(D_i),\sigma_i)$ determines a morphism $s_i\colon X\to [\bA^1/\bG_m]$, and the stack $\radice[\vec{r}]{(X,D)}$ is the fiber product of the diagram
$$
\xymatrix{
\radice[\vec{r}]{(X,D)}\ar[d] \ar[r] & [\bA^1/ \mathbb{G}_m]^N \ar[d]^-{(-)^{\vec{r}}}\\ 
X \ar[r]^-s & [\bA^1/ \mathbb{G}_m]^N
}
$$
where $s\colon X\to [\bA^1/ \mathbb{G}_m]^N$ is determined by the $s_i$ and $(-)^{\vec{r}}$ is the map induced by $(-)^{r_i}\colon [\bA^1/\bG_m]\to [\bA^1/\bG_m]$ on the $i$-th component.

Equivalently, $\radice[\vec{r}]{(X,D)}$ can be constructed by iteration from the previous case: from $X$ we first construct the stack $\radice[r_1]{(X,D_1)}$ as in the previous section. The preimages $\widetilde{D_2}, \hdots, \widetilde{D_N}$ of $D_2,\hdots, D_N$ to this stack give a simple normal crossings divisor $\widetilde{D}$, and we can replace $(X,D)$ by the log stack $(\radice[r_1]{(X,D_1)}, \widetilde{D})$, and continue the process.

Finally, the stack $\radice[\vec{r}]{(X,D)}$ can also be identified with the fibered product of the diagram
$$
\xymatrix{
\radice[r_1]{(X,D_1)} \ar[rrd] & \radice[r_2]{(X,D_2)} 
\ar[rd] & \ldots & \radice[r_{N-1}]{(X,D_{N-1})} \ar[ld] & \radice[r_N]{(X,D_N)} 
\ar[lld] \\
&& X &&
}
$$
As in the case of one divisor, the construction of $\radice[\vec{r}]{(X,D)}$ is compatible with smooth base change. 

On the stack $\radice[\vec{r}]{(X,D)}$ there are $N$ universal effective Cartier divisors $D_{i, r_i}$, obtained as the reduction of the preimage of $D_i$ to $\radice[\vec{r}]{(X,D)}$  via the projection to $X$, or equivalently as the preimage of the corresponding divisor $D_{i, r_i}$ on $\radice[r_i]{(X,D_i)}$ constructed in the previous section (we abuse notation slightly and denote the two by the same symbol).

Zariski locally where $X=\Spec A$ is affine and each $D_i$ has a global equation $f_i=0$ we have an isomorphism
$$
\radice[\vec{r}]{(X,D)}\simeq [\Spec A[x_1,\hdots, x_N]/(f_1-x_1^{r_1},\hdots, f_N-x_N^{r_n})/\prod_i \mu_{r_i}]
$$
where each factor $\mu_{r_i}$ acts on $x_i$ by multiplication and trivially on the other variables.

\subsubsection{Root stacks for non-simple normal crossings
}\label{sec:pre.non.simple}
 
If $D$ is only normal crossings, then the ``correct'' notion of root stack of $X$ along $D$ is not the naive generalization of the simple normal crossings case, but rather 
the subtler one put forward in \cite{borne-vistoli}. We refer to that paper and to \cite[Section 2.2]{talpo2014moduli} for details about what follows.

We start by explaining with an example why the naive notion of root stacks along the lines of the previous two cases is not the one that we want to work with.

\begin{example}\label{ex:nodal.cubic}
Consider an irreducible nodal cubic  $D\subset \bP^2$, and the pair $(\bP^2,D)$. We could, as in Section \ref{sec:smooth}, start with the morphism $\bP^2\to[\bA^1/\bG_m]$ that classifies the divisor $D$ (i.e. determined by  the pair ($\cO_{\bP^2}(D),\sigma_D)$, where $\sigma_D$ is the tautological section of $\cO_{\bP^2}(D)$ cutting out $D$), and set $\radice[r]{(\bP^2,D)}$ to be the fibered product of diagram (\ref{rootsquare}).
The unpleasant feature of this construction is that it adds a stabilizer  $\mu_r$ along all of $D$, including at the node, where one would expect to  have a $\mu_r\times \mu_r$ accounting for the two branches of $D$.

The point here is that the morphism $\bP^2\to[\bA^1/\bG_m]$  does not induce to the ``correct'' logarithmic structure on $\bP^2$ corresponding to the pair $(\bP^2,D)$, which is the compactifying log structure briefly mentioned in Section \ref{sec:logstr}, and \emph{does} distinguish the branches of $D$ at the node.
\end{example}

The trouble with the previous example is remedied by working with sheaves of weights (as explained in \cite{borne-vistoli}), that allow to locally distinguish the branches of $D$ at the node, and to locally take roots separately along the branches.

We briefly recall what this entails: in general, the log structure of a fine saturated log scheme $X$ can be seen as a ``Deligne--Faltings'' structure, a symmetric monoidal functor $L\colon A\to \Div_X$ from a sheaf of saturated sharp monoids $A$ on the small \'etale site of $X$ to the symmetric monoidal stack $\Div_X$ of pairs $(L,t)$ of line bundles with global section. The monoidal operation of $\Div_X$ is given by tensor product.

In the case of \emph{simple} normal crossings, the canonical (compactifying) log structure $M_D\to \cO_X$ induced by $D$ can be completely described in the language of Deligne--Faltings structures by the symmetric monoidal functor $\bN^N\to \Div_X(X)$ sending the generator $e_i$ to the pair $(\cO_X(D_i),\sigma_i)$ (this is a ``chart'' for the log structure, in the sense of \cite[Section 3.3]{borne-vistoli}). If $D$ is only normal crossings though, the \emph{global} components of the divisor $D$ will not capture the geometry of the situation faithfully (as for the nodal cubic of Example \ref{ex:nodal.cubic}), and the compactifying log structure is \emph{not} described by the symmetric monoidal functor $\bN^N\to \Div_X(X)$ defined above.

In both cases (simple and non-simple normal crossings), the sheaf $A$ is the quotient $M_D/\cO_X^\times$ of the sheaf $M_D\subset \cO_X$ consisting of regular functions that only possibly vanish along $D$ (mentioned in Section \ref{sec:logstr}) by the subgroup $\cO_X^\times$ of invertible functions, and the symmetric monoidal functor $L\colon A\to \Div_X$ is induced by the inclusion $M_D\subseteq \cO_X$ by passing to (stacky) quotients by $\cO_X^\times$, obtaining $$A=M_D/\cO_X\to [\cO_X/\cO_X^\times]=[\bA^1/\bG_m]_X=\Div_X.$$ If $D$ has simple normal crossings, then the sheaf $A$ is isomorphic to the direct sum $\bigoplus_{i=1}^N {j_i}_*\bN_{D_i}$, where $D_i$ are the irreducible components of $D$, $j_i\colon D_i \to X$ is the inclusion and $\bN_{D_i}$ is the constant (\'etale) sheaf of monoids with stalk $\bN$ on $D_i$. The map $A\to \Div_X$ in this case sends the generator $1_{D_i}$ of the summand ${j_i}_*\bN_{D_i}$ to the pair $(\cO_X(D_i),\sigma_i)$ (where as usual $\sigma_i$ is the section that cuts out $D_i$). If $D$ is only normal crossings but non-simple, this description does not hold anymore (it only holds \'etale locally where $D$ becomes simple normal crossings). For the nodal cubic of Example \ref{ex:nodal.cubic}, the stalk of the sheaf $A$ at the node is $\bN^2$ (accounting for the two branches) rather than just $\bN$.


For a general log scheme $X$ and $r\in \bN$, the root stack $\radice[r]{X}$ (in the sense of Borne and Vistoli) parametrizes lifts of the symmetric monoidal functor $L\colon A\to \Div_X$ to a symmetric monoidal functor $\frac{1}{r}A\to \Div_X$, where we are embedding $A$ into $\frac{1}{r}A\cong A$ via the map $\cdot r\colon A\to A$. The image of the section $\frac{1}{r}a$ via this lift is an $r$-th root of the pair $L(a)=(L_a,s_a)$. In the case of the nodal cubic $D$ of Example \ref{ex:nodal.cubic}, i.e. of $\bP^2$ equipped with the compactifying log structure induced by $D$, this  results in trivial stabilizers outside of $D$, a stabilizer $\mu_r$ over points of $D$ that are not the node, and a stabilizer $\mu_r\times\mu_r$ at the node.

If $D\subset X$ is a normal crossings divisor and $U\to X$ is a surjective \'etale morphism such that the pull-back $D|_U$ is simple normal crossings, then the stack $\radice[{r}]{(X,D)}$ can be obtained from the root stack $\radice[\vec{r}]{(U, D|_U)}$ constructed in the previous section (where $\vec{r}$ is the vector $(r,\hdots, r)$) by descent.

Moreover, In place of $\frac{1}{r}A$ one can use an arbitrary \emph{Kummer extension} $A\to B$ of sheaves of monoids, an injective morphism such that every section of $B$ locally has a multiple coming from $A$. This more general construction of the root stack $\radice[B]{X}$  for Kummer extensions $A\to  B$, where $B$ is not of the form  $\frac{1}{r}A$, will come up only in Section \ref{reducetoncviacan}. In general, the stack $\radice[B]{X}$ is a tame Artin stack (Deligne--Mumford in characteristic $0$), and the projection $\radice[B]{X}\to X$ is a coarse moduli space morphism.

\subsubsection{The infinite root stack and the Kummer flat topos}\label{sec:irs}

In all the cases considered above, the various root stacks of $(X,D)$ form an inverse system. Let us temporarily use the letter $r$ to denote either a natural number, or a vector of natural numbers $\vec{r}\in \bN^N$, depending on the context that we are considering. We write $r\mid r'$ to denote divisibility in the first case, and that $r_1 \mid r_1', \ldots, r_N \mid r_N'$ in the second case. We also write   
$\frac{r'}{r}$ for the vector $\left(\frac{r'_1}{r_1}, \ldots, \frac{r'_N}{r_N}\right)$.

With these conventions, if $(X,D)$ is a pair where $D$ is normal crossings and $r\mid r'$, there is a natural projection $g_{r',r}\colon \radice[r']{(X,D)}\to \radice[r]{(X,D)}$, roughly defined by raising the roots parametrized by the source stack to the $r'/r$-th power. Since $\radice[1]{(X,D)}\simeq X$, the map $g_{r,1}\colon \radice[r]{(X,D)}\to X$ is the natural projection. If $D$ is a regular divisor, then the maps $g_{r',r}$ restrict to maps $f_{r',r}\colon D_{r'}\to D_r$ between the universal Cartier divisors on the two stacks (and analogously in the case where $D$ is simple normal crossings).

Moreover, $g_{r',r}$ is a ``relative root stack'' morphism, in the sense that for a morphism $T\to \radice[r]{(X,D)}$ where $T$ is a scheme, the pull-back of $g_{r',r}$ can be seen as the projection from a root stack of the scheme $T$. Consequently, the maps $g_{r',r}$ have all the properties of a projection to a coarse moduli space of a tame algebraic stack. More precisely, $\radice[r']{(X,D)}$ is  canonically isomorphic to the $\frac{r'}{r}$-th root stack of $\radice[r]{(X,D)}.$ Under this isomorphism $g_{r',r}$ is identified with the projection
$$
\radice[r']{(X,D)} \simeq \radice[\frac{r'}{r}]{(\radice[r]{(X,D)},D_r)} \to \radice[r]{(X,D)}. 
$$
The maps $g_{r',r}$ equip the stacks $\{\radice[r]{(X,D)}\}_r$ with the structure of an inverse system. The inverse limit
$
\radice[\infty]{(X,D)}:=\varprojlim_r \radice[r]{(X,D)}
$
is the \emph{infinite root stack} of $(X,D)$ \cite{TV}. This, contrarily to the finite root stacks, is not algebraic, but has a local description as a quotient stack, that allows some control over quasi-coherent sheaves (and in particular perfect complexes) on it.

It will be convenient for us to work with a directed subsystem of root stacks which is cofinal in $\{\radice[r]{(X,D)}\}_r.$ Namely we will consider the subsystem $\{\radice[n!]{(X,D)}\}_{n \in \mathbb{N}},$ where $\vec{n}! :=(n !, \hdots, n !)$ if $D$ has more than one component. Note that the restriction of the ordering given by divisibility on $\bN$ to the subset $\{n!\}_{n\in \bN}\subseteq \bN$ coincides with the usual ordering of the naturals (i.e. $r!\mid s!$ if and only if $r\leq s$), and that this subsystem is cofinal, since given an index $\vec{r}=(r_1,\hdots, r_N)$, we have $\vec{r} \mid \vec{M}!$ with $M=\mathrm{lcm}(r_i)$. Therefore we have a canonical isomorphism $\radice[\infty]{(X,D)} \simeq \varprojlim_{n} \radice[n!]{(X,D)}.$

Let us also recall that every log scheme $X$ has an associated Kummer flat topos $X_\mathrm{Kfl}$. We omit a discussion of this construction, since for the present paper it can be safely taken as a black box. We refer the reader to \cite[Section 6.2]{TV} or \cite[Section 2]{Ni1} for the definition and basic properties. One can also define a ``small fppf site'' of the infinite root stack $\radice[\infty]{X}$, and as proven in \cite[Theorem 6.16]{TV}, the resulting topos $\radice[\infty]{X}_\mathrm{fppf}$ is isomorphic  to the Kummer flat topos $X_\mathrm{Kfl}$ of the fine saturated log scheme $X$.
 
 We will also consider the Kummer flat topos $X_\mathrm{Kfl}$ for $X$ a log algebraic stack. To the best of our knowledge this has not been discussed in the literature before. The conscientious reader can take the equivalence with the small fppf topos of the infinite root stack $\radice[\infty]{X}$ as a \emph{definition} for $X_\mathrm{Kfl}$ in this setting. One can also write down a definition for the Kummer flat topos in analogy with the one for schemes, and use  \cite[Theorem 6.16]{TV} to prove that this is indeed equivalent to the small fppf topos of the infinite root stack.

\subsection{ $\infty$-categories and categories of sheaves}
\label{sec:dgcategories}
Throughout the paper we 
will use the formalism of 
$(\infty,1)$-categories, the standard reference is  Lurie's work \cite{lurie2009higher, Lu2}. 
The main reason for working with $\infty$-categories is that additive invariants, and in particular algebraic K-theory, cannot be computed from the underlying  triangulated categories  alone: they are not invariants of triangulated categories but rather of their enhancements. We will work with \emph{stable $(\infty, 1)$-categories} as an enhancement of triangulated categories.  We will need very little from the theory of $\infty$-categories, and the reader could replace stable $(\infty, 1)$-categories with $\kappa$-linear dg categories throughout.   

From now on we will refer to $(\infty,1)$-categories just as $\infty$-categories. 
If $\cC$ is an $\infty$-category, and $A$ and $B$ are objects in $\cC,$ we denote by $\Hom_\cC(A, B)$ the \emph{mapping space}  between $A$ and $B$. All limits and colimits of $\infty$-categories appearing in the paper 
are to be understood in the $\infty$-categorical sense.  We  say that a diagram of $\infty$-categories 
$$
\xymatrix{
\cC_1 \ar[r]^-{F} \ar[d]_-{G} \ar[d] & \cC_2 \ar[d]^-{H} \\
\cC_3 \ar[r]^-{K} & \cC_4}
$$
is \emph{commutative} if there 
is a natural transformation $\alpha\colon HF \Rightarrow KG$ which is an equivalence when passing to homotopy categories. 

We will be mostly interested in \emph{stable} \emph{idempotent-complete} $\infty$-categories. Stable $\infty$-categories are an enhancement of triangulated categories: if $\cC$ is a stable $\infty$-category its homotopy category  $\mathrm{Ho}(\cC)$ is  triangulated. An important source of stable $\infty$-categories are the $\infty$-categorical derived categories of Grothendieck abelian categories, see \cite{Lu2} Definition 1.3.5.8. We refer the reader to Section 2.2 of \cite{BGT} for a summary of the theory of stable $\infty$-categories which contains  most of the facts that we will need   in this paper. Small  stable idempotent-complete $\infty$-categories form a presentable  
$\infty$-category which is denoted $\SCat.$  In particular $\SCat$ has all small limits and colimits. As explained in \cite{BGT} there is a well-defined notion of tensor product of  stable idempotent-complete   $\infty$-categories. This endows $\SCat$ with a symmetric monoidal structure. 
If $\kappa$ is a commutative ring we denote by $\Perf(\kappa)$ the symmetric monoidal stable $\infty$-category of perfect $\kappa$-modules: if $\kappa-mod$ is the abelian category of 
$\kappa$-modules, $\Perf(\kappa)$ is the subcategory of  compact objects in the the $\infty$-categorical derived category $\mathrm{D}(\kappa-mod)$ (see Definition 1.3.5.8 of \cite{Lu2}). 
  We denote 
$\mathrm{Cat}_{\infty, \kappa}^{\mathrm{perf}}$ 
the symmetric monoidal $\infty$-category of idempotent-complete stable $\infty$-categories tensored over $\Perf(\kappa)$,  see Section 4.1 of \cite{HSS} for more details on this construction. We will refer to objects in $\mathrm{Cat}_{\infty, \kappa}^{\mathrm{perf}}$ as \emph{$\kappa$-linear}  $\infty$-categories. 

For later reference we recall from \cite{R} a general result on colimits of  $\infty$-categories.
\begin{proposition}\label{colim} 
Let $I$ be a filtered category, $\{\cC_i\}_{i \in I}$ be a filtered system of $\infty$-categories and assume that all the structure maps $\alpha_{i \to j}\colon\cC_i \to \cC_j$ are fully faithful. 
Then the colimit $\cC := \varinjlim \cC_i$ is the  $\infty$-category  with
\begin{itemize}
\item  objects given by 
the union $\bigcup_{i\in I} \mathrm{Ob}(\cC_i)$, and 
\item the Hom complex between  $A_i \in \cC_i$ and  $A_j\in \cC_j$ is given  by 
$$\Hom_\cC(A_i, A_j) = \Hom_{\cC_l} (\alpha_{l \to i}(A_i),\alpha_{l \to j}(A_j))$$ where $l$ is any object of $I$ that is the source of morphisms $ j \leftarrow l \rightarrow i$. 
\end{itemize}
\end{proposition}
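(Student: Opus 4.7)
The plan is to exhibit the $\infty$-category $\cC'$ described in the statement as a model for the colimit by checking the universal property, then invoking standard $\infty$-categorical technology (as in \cite{R}) to upgrade the set-theoretic description to one that captures the full simplicial structure.

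First I would verify that the formula for the mapping space is well-defined, independently of the choice of the auxiliary index $l$. Given two candidates $l$ and $l'$ both equipped with arrows from (or to) $i$ and $j$, filteredness of $I$ produces a common object $m$ receiving morphisms from $l$ and $l'$. Because the structure maps are fully faithful, the induced maps on mapping spaces
\[
\Hom_{\cC_l}(\alpha(A_i), \alpha(A_j)) \; \longrightarrow \; \Hom_{\cC_m}(\alpha(A_i), \alpha(A_j)) \; \longleftarrow \; \Hom_{\cC_{l'}}(\alpha(A_i), \alpha(A_j))
\]
are equivalences. Thus the candidate mapping space is canonically defined as a colimit of an essentially constant diagram.

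Second, I would promote this description from mapping spaces to full coherence data. Composition is defined by picking a common upper bound $l$ lying above the three source indices and composing in $\cC_l$; the full-faithfulness hypothesis again ensures that the operation is well-defined up to contractible choice, and higher coherences are controlled analogously. Concretely, one can work in the quasi-category model and check that the naive simplicial union $\cC'$ satisfies the inner horn lifting property, because each finite simplex involves only finitely many indices and so lives entirely inside some single $\cC_l$. This produces the fully faithful inclusions $\cC_i \hookrightarrow \cC'$ and assembles them into a cocone under the diagram.

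Finally, I would check the universal property. Given any other cocone $\{F_i \colon \cC_i \to \cD\}$, a factorization $F \colon \cC' \to \cD$ must send an object $A_i \in \cC_i \subseteq \cC'$ to $F_i(A_i)$, and on mapping spaces it must agree with $F_l$ applied to the common-index representative; the compatibility of the $F_i$ with the structure maps, together with the well-definedness established above, makes these prescriptions consistent. The expected main obstacle is managing the higher-simplex coherence data, since a functor of $\infty$-categories is not determined by its action on objects and $1$-morphisms alone; this is exactly where the full-faithfulness hypothesis buys us the most, since every finite simplex in $\cC'$ already lifts to a single $\cC_l$, and the choice of such a lift is contractible. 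This is the technical core that is carried out in \cite{R}.
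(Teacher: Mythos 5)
The paper does not actually prove Proposition~\ref{colim} --- it is stated as a recalled fact with a citation to \cite{R}, so there is no in-paper argument to compare against. Your sketch is a reasonable reconstruction of what such a proof would look like, and the two main ideas you isolate --- (i) fully-faithfulness plus filteredness forces the mapping-space diagram to be essentially constant once one sits above both $i$ and $j$, and (ii) in the quasi-category model every finite piece of simplicial data factors through some single $\cC_l$, which is what gives inner horn lifts and verifies the universal property on coherences --- are indeed the structural core. You also silently correct a typo in the statement as printed: the auxiliary index $l$ must be a common \emph{upper bound} of $i$ and $j$ (so that $\alpha_{i\to l}$ and $\alpha_{j\to l}$ are defined and the formula reads $\Hom_{\cC_l}(\alpha_{i\to l}(A_i),\alpha_{j\to l}(A_j))$), not a common source as written; good catch, whether intentional or not.

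One technical point you elide that is worth naming explicitly: to identify the $\infty$-categorical colimit with the \emph{levelwise} colimit of simplicial sets, you need the maps $\cC_i \to \cC_j$ to be monomorphisms of simplicial sets (cofibrations), not merely fully faithful functors of quasi-categories. A fully faithful functor need not be injective on simplices. The standard fix is to first replace the filtered diagram by a levelwise equivalent one in which all transition maps are monomorphisms (e.g.\ via a Reedy- or latching-style cofibrant replacement, or by using the mapping-cylinder trick iteratively along the filtered poset), and only then take the strict colimit of simplicial sets, observe it is already a quasi-category by your horn-filling argument, and check it computes the homotopy colimit. Without this replacement step, ``the naive simplicial union $\cC'$'' is not well defined and the horn-lifting argument has nothing to act on. Also, ``objects given by the union $\bigcup_i \mathrm{Ob}(\cC_i)$'' should more precisely be the filtered colimit of the sets of objects; this is a quotient of the disjoint union rather than a literal union, unless one has first arranged (via the same replacement) that the maps are genuine inclusions.
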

   
We will be  working with stable  categories of quasi-coherent sheaves on schemes and stacks. A survey of all basic facts and definitions on $\infty$-categories of quasi-coherent sheaves can be found in Section 2 and 3 of \cite{BFN}. Let $X$ be a  stack. We denote:
\begin{itemize}
\item by $\mathrm{qcoh}(X)$ the abelian category of quasi-coherent sheaves on $X$,    
and by $\Qcoh(X),$ the 
 stable $\infty$-category of quasi-coherent sheaves on $X$;
 \item by $\mathrm{coh}(X)$ the abelian category of coherent sheaves on $X$,    
and by $\Coh(X)$ the 
 stable $\infty$-category of coherent sheaves on $X$.
 \end{itemize}
 \begin{remark}
 \label{GR}
Recall  from \cite{BFN} that $\Qcoh(X)$ is  defined as  the limit
 $$
 \Qcoh(X) := \varprojlim_{\mathrm{Spec}(A) \to X}  \Qcoh(\mathrm{Spec}(A))
 $$
 where $\mathrm{Spec}(A) \to X$ runs over all maps from derived affine schemes with target $X$.  In general $\Qcoh(X)$ is different from the $\infty$-categorical derived category $\mathrm{D}(\mathrm{qcoh}(X))$.  By \cite[Proposition 2.4.3]{gaitsgory2017study},  however, if $X$ is a quasi-compact and quasi-separated  classical  Artin stack the respective 
  subcategories of objects that are \emph{bounded below} are equivalent
  $$ 
  \Qcoh(X)^+ \simeq \mathrm{D}(\mathrm{qcoh}(X))^+$$  
 \end{remark}
 
The tensor product of quasi-coherent sheaves equips  $\Qcoh(X)$ with a symmetric monoidal structure. We define $\Db{X}$, the $\infty$-category of perfect complexes,  as the full subcategory of dualizable objects in $\Qcoh(X)$. As proved in Proposition 3.6 of \cite{BFN} this is equivalent to  the ordinary definition of perfect complexes as objects that are locally equivalent to complexes of vector bundles.  
 Let $I$ be a small cofiltered category, and let $\{X_i\}_{i\in I}$ be a pro-object in stacks. 
\begin{definition}
\label{properf}
We set $\Db{\{X_i\}_{i\in I}}:=\varinjlim_i \Db{X_i}$ as an $\infty$-category.
\end{definition}
 
Let $X$ be a log scheme. We will apply Definition \ref{properf} 
to the pro-object in stacks 
given by the root stacks of 
$X$ together with the root maps between them, $\{\radice[r]{X}\}_{r \in \mathbb{N}}.$ 
As we prove in Proposition 2.25 of \cite{scherotzke2016logarithmic}, which we recall below, under appropriate assumptions there is an equivalence  
$\Db{\radice[\infty]{X}}\simeq \varinjlim_r \Db{\radice[r]{X}}.$ Although 
in \cite{scherotzke2016logarithmic} 
Proposition 2.25 was stated for the dg categories of perfect complexes,  the proof given there works without variations for $\infty$-categories of perfect complexes over an arbitrary ground ring.

\begin{proposition}[{\cite[Proposition 2.25]{scherotzke2016logarithmic}}]\label{prop.db.irs}
Let $X$ be a noetherian fine saturated log algebraic stack with locally free log structure over $\kappa$. Then there is an equivalence of $\infty$-categories
$$
\Db{\radice[\infty]{X}}\simeq \varinjlim_r \Db{\radice[r]{X}}.
$$
\end{proposition}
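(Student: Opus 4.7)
The plan is to construct a pullback functor $\Phi\colon \varinjlim_r \Db{\radice[r]{X}} \to \Db{\radice[\infty]{X}}$ and show it is an equivalence by reducing to a local model where both sides admit explicit descriptions in terms of graded modules. The natural projections $p_r\colon \radice[\infty]{X} \to \radice[r]{X}$ are compatible with the transition maps $g_{r',r}$, so the pullback functors $p_r^*$ assemble, via the universal property of the filtered colimit, into $\Phi$. Since the locally free log structure makes each $g_{r',r}$ flat and finite, pullback preserves perfectness throughout.

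Next I would reduce to a local/affine chart. The locally free hypothesis guarantees that smooth-locally on $X$ the log structure comes from a chart $\bN^N \to M_X$ whose generators map to a regular sequence $f_1,\ldots,f_N \in \cO_X$. Both $\radice[r]{-}$ and $\radice[\infty]{-}$ commute with smooth base change, and $\Perf$ satisfies smooth (in fact fppf) descent, so it suffices to establish the equivalence over such a chart $X = \Spec A$.

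In the local case we have the explicit presentation $\radice[r]{X} \simeq [\Spec A[x_1,\ldots,x_N]/(x_i^r-f_i)\,/\,\mu_r^N]$, and passing to the limit $\radice[\infty]{X} \simeq [\Spec A_\infty \,/\, \widehat{\mu}^N]$ where $A_\infty := \varinjlim_r A[x_1,\ldots,x_N]/(x_i^r - f_i)$ and $\widehat{\mu}^N := \varprojlim_r \mu_r^N$. Perfect complexes on these quotient stacks can be identified with equivariant perfect complexes, equivalently with perfect complexes graded by the character lattice: by $(\bZ/r)^N$ for $\radice[r]{X}$, and by $(\bQ/\bZ)^N = \varinjlim_r (\bZ/r)^N$ for $\radice[\infty]{X}$. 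Under these identifications, essential surjectivity is clear: any perfect complex on $\radice[\infty]{X}$ is compact and bounded, so only finitely many of its graded components are nonzero, forcing it to be supported on $(\tfrac{1}{r}\bZ/\bZ)^N$ for $r$ large enough, hence to descend to $\radice[r]{X}$. Fully faithfulness follows because Hom complexes between two objects supported on a common finite grading agree before and after passing to the limit, matching the colimit description given by Proposition \ref{colim}.

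The main obstacle is gluing the local equivalences into a global one: the infinite root stack is not algebraic, so one cannot directly apply standard descent results, and one must check that the local identifications are compatible with restriction between charts. This reduces to the naturality of $\Phi$ combined with the fact that, under the locally free hypothesis, the maps $g_{r',r}$ are flat and affine, so $p_r^*$ commutes with all restrictions and with taking sections of $\cO$ on each chart. A subtlety is that the colimit over $r$ and the limit defining descent data do not commute in general; however, because compact objects on $\radice[\infty]{X}$ have finite support in the grading, one can fix a single $r$ on which to test descent, at which point the usual flat descent for $\Perf$ on the algebraic stack $\radice[r]{X}$ takes over and completes the argument.
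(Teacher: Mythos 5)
Your local-to-global strategy is close in spirit to the argument of the cited Proposition 2.25, but there is a genuine error at the heart of your essential surjectivity step. You assert that a perfect complex on $\radice[\infty]{X}$, in the local model $[\Spec A_\infty/\widehat{\mu}^N]$, has only finitely many nonzero graded components. This is false: the structure sheaf, viewed as the $(\bQ/\bZ)^N$-graded $A_\infty$-module $A_\infty$ itself, has a nonzero component in every degree of $(\bQ/\bZ)^N$, and hence so does every nonzero perfect complex. What is true, and what you actually need, is that a perfect complex on $[\Spec A_\infty/\widehat{\mu}^N]$ is a retract of a finite complex whose terms are finite direct sums of character twists $\cO(\chi_1),\ldots,\cO(\chi_k)$ of the structure sheaf; since the finitely many $\chi_i$ all lie in $(\tfrac{1}{r}\bZ/\bZ)^N$ for $r$ sufficiently large, the complex lies in the essential image of $g_{\infty,r}^*$. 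The conclusion you want is correct, but the intermediate ``finite support in the grading'' claim is not, and it is this same false claim that you invoke again to repair the gluing step.

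On the gluing: you rightly observe that fppf descent for $\Perf$ is not available off the shelf on the non-algebraic stack $\radice[\infty]{X}$, so your reduction to an affine chart is not yet justified; making the ``fix a single $r$ on which to test descent'' idea rigorous would further require a uniform bound on $r$ across the full cosimplicial descent diagram and its higher coherences, which you do not address. A cleaner route is to begin from the global identification $\mathrm{qcoh}(\radice[\infty]{X})\simeq\varinjlim_r \mathrm{qcoh}(\radice[r]{X})$ coming from \cite{TV}, so that the two sides are compared as categories without any hand gluing; the proposition then reduces to showing that dualizable objects in the colimit coincide with the union of the images of the fully faithful pullbacks $g^*_{\infty,r}$, where your graded-module analysis, corrected as above, supplies exactly the right local input without being asked to carry a descent argument.
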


\begin{proposition}
\label{kflinf}
Let $X$ be a noetherian fine saturated log algebraic stack with locally free log structure over $\kappa$ (this holds in particular if the log structure comes from a normal crossings divisor). Then there is an equivalence of $\infty$-categories
$
\Perf(X_{\mathrm{Kfl}}) \simeq \Db{\radice[\infty]{X}}.
$
\end{proposition}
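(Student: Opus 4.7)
The plan is to chain the ringed-topos equivalence of \cite[Theorem 6.16]{TV} with Proposition \ref{prop.db.irs}, with the core work being the identification of perfect complexes on the small fppf topos of $\radice[\infty]{X}$ with perfect complexes on the pro-system of finite root stacks. First I would invoke \cite[Theorem 6.16]{TV} (in the algebraic-stack case this is the extension flagged at the end of Section \ref{sec:irs}) to get an equivalence of ringed topoi $X_{\mathrm{Kfl}} \simeq \radice[\infty]{X}_{\mathrm{fppf}}$, which in turn yields an equivalence $\Perf(X_{\mathrm{Kfl}}) \simeq \Perf(\radice[\infty]{X}_{\mathrm{fppf}})$ on $\infty$-categories of perfect complexes. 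Combined with Proposition \ref{prop.db.irs}, the proposition then reduces to producing a natural equivalence
$$
\Perf(\radice[\infty]{X}_{\mathrm{fppf}}) \simeq \varinjlim_{r} \Db{\radice[r]{X}}.
$$

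To build the comparison functor, I would use that each $\radice[r]{X}$ is a tame algebraic stack, so by fppf descent of perfect complexes the $\infty$-category $\Db{\radice[r]{X}}$ (defined via the lisse-étale site) is equivalent to perfect complexes on the small fppf topos $\radice[r]{X}_{\mathrm{fppf}}$. The transition morphisms $g_{r',r}$ induce pull-back functors on these fppf perfect complex categories compatible with the inverse system of topoi, and therefore a canonical functor from the colimit into $\Perf(\radice[\infty]{X}_{\mathrm{fppf}})$. Full faithfulness should be a direct computation using Proposition \ref{colim}: Hom-spaces in $\Perf(\radice[\infty]{X}_{\mathrm{fppf}})$ are computed by derived global sections of an internal Hom sheaf, and the structure sheaf of the fppf topos of $\radice[\infty]{X}$ is the filtered colimit of the structure sheaves pulled back from finite levels, so Hom-spaces are correctly computed from any sufficiently large level.

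The main obstacle, as I see it, is essential surjectivity: showing that every perfect complex on the fppf topos of $\radice[\infty]{X}$ descends to some $\radice[r]{X}$. Here I would work Zariski-locally on $X$ and use the explicit quotient presentation of the infinite root stack recalled in Section \ref{sec:smooth}--\ref{sec:pre.non.simple}: locally $\radice[\infty]{(X,D)}$ is the quotient of $\Spec A_\infty$ by a profinite diagonalizable group, where $A_\infty = \varinjlim_n A[x_1,\ldots,x_N]/(f_i - x_i^{n!})$, and perfect complexes on this quotient are equivariant perfect complexes on $\Spec A_\infty$. A perfect complex over a filtered colimit of rings is, by a standard finite-presentation argument, the base change of a perfect complex from a finite stage of the colimit; the equivariance structure can likewise be descended to a finite level after possibly enlarging $r$. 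Gluing these local descents on an affine cover of $X$ (using full faithfulness to match overlaps) yields a global perfect complex at some finite root level whose pull-back recovers the original, completing the proof.
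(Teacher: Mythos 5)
Your route is genuinely different from the paper's, and the contrast is instructive. The paper does not go through the ringed-topos equivalence of \cite[Theorem 6.16]{TV} at all. Instead it invokes \cite[Corollary 6.17]{TV}, which directly provides an equivalence of abelian categories $\mathrm{coh}(X_{\mathrm{Kfl}}) \simeq \mathrm{coh}(\radice[\infty]{X})$, hence an equivalence of stable $\infty$-categories $\Coh(X_{\mathrm{Kfl}}) \simeq \Coh(\radice[\infty]{X})$. It then observes, using coherence of the structure sheaves (\cite[Proposition 4.9]{TV}), that both ringed topoi are eventually coconnective in the sense of \cite[Section~1.5]{Ga}, so that $\Perf \subseteq \Coh$ on each side and, moreover, $\Perf$ is characterized as the full subcategory of dualizable objects inside $\Coh$. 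The desired equivalence is then a purely formal consequence: an equivalence of stable $\infty$-categories preserves dualizable objects. Notice that this argument never passes through the colimit description $\varinjlim_r \Db{\radice[r]{X}}$; it is intrinsic to the topoi.

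Your approach instead reduces to the colimit description and then essentially re-proves a small-fppf-topos variant of Proposition~\ref{prop.db.irs}: the comparison functor, full faithfulness via filtered colimits of structure sheaves, and essential surjectivity via a finite-presentation argument on local quotient presentations. This is plausible in outline, but it carries more technical burden than the paper's argument, and several steps are asserted rather than proved. In particular: (i)~the assertion that the small fppf topos of $\radice[\infty]{X}$ in the sense of \cite{TV} is compatible with the inverse system of small fppf topoi of the finite root stacks, in a way that lets you form the comparison functor from the colimit, deserves care --- the site in \cite{TV} is built from Kummer flat covers and is not a priori the inverse limit topos; (ii)~the claim that derived global sections of internal $\mathrm{Hom}$-sheaves commute with the relevant filtered colimit needs the noetherian hypothesis and a compactness argument, which you should make explicit; (iii)~descending the profinite-group equivariance structure to a finite stage, and then gluing over a Zariski cover while increasing $r$ uniformly, is reasonable under quasi-compactness but is not spelled out. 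None of these are fatal, but they are precisely the technical issues that the paper's shortcut --- via the coherence equivalence and dualizability --- sidesteps. If you want to keep your route, you should either cite the relevant compatibility statements from \cite{TV} for point~(i), or observe (as the paper does) that the needed equivalence on perfect complexes follows formally from the abelian-category statement \cite[Corollary~6.17]{TV} and the characterization of $\Perf$ as dualizable objects, in which case the topos-theoretic descent argument becomes unnecessary.
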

\begin{proof}
The proof is very similar to the proof of 
Proposition \ref{prop.db.irs} given in \cite{scherotzke2016logarithmic}. By  Corollary 6.17 of \cite{TV} 
there is an equivalence of abelian categories  $
\mathrm{coh}(X_{\mathrm{Kfl}}) \simeq \mathrm{coh}(\radice[\infty]{X}),
$
which yields an equivalence between the stable 
$\infty$-categories 
$
 \Coh(X_{\mathrm{Kfl}})   \simeq 
 \Coh(\radice[\infty]{X})  . 
$
Also, the structure sheaves $\cO_{X_{\mathrm{Kfl}}}$ 
and $\cO_{\radice[\infty]{X}}$ are coherent, see Proposition 4.9 of \cite{TV}. Thus, in the terminology of 
\cite[Section 1.5]{Ga}, the ringed topoi $X_{\mathrm{Kfl}}$ and $\radice[\infty]{X}$ are eventually coconnective. As in the proof 
of  Proposition 1.5.3 in \cite{Ga}, this implies that there are fully-faithful inclusions
$$
\Perf(X_{\mathrm{Kfl}}) \subseteq 
 \Coh(X_{\mathrm{Kfl}}), \quad \Perf(  \radice[\infty]{X}) \subseteq 
 \Coh(\radice[\infty]{X}).
$$ Further, the categories of perfect complexes are the full subcategories of dualizable objects. 
In formulas, we can write 
$$
\Perf(X_{\mathrm{Kfl}}) \simeq  
 \Coh(X_{\mathrm{Kfl}}) ^{\mathrm{dual}}, \quad 
\Perf(\radice[\infty]{X}) \simeq  
 \Coh(\radice[\infty]{X}) ^{\mathrm{dual}}. 
$$
We obtain the following chain of equivalences, which implies the statement  
$$
\Perf(X_{\mathrm{Kfl}}) \simeq  
 \Coh(X_{\mathrm{Kfl}})  ^{\mathrm{dual}} \simeq \Coh(\radice[\infty]{X})  ^{\mathrm{dual}} 
\simeq  
\Perf(\radice[\infty]{X}).
$$
 \end{proof}

\begin{remark}
Let $X$ be a noetherian fine saturated log algebraic stack with locally free log structure over $\kappa$. In  this paper we take $\Perf(\radice[\infty]{X})$ as our chosen model for the category of perfect complexes over the log scheme $X$. By Proposition \ref{kflinf} this coincides with the category of perfect complexes on the Kummer flat topos of $X$ and is therefore well suited to our applications to Kummer flat invariants of $X$.  
However we could have obtained an a priori different model of perfect complexes on $X$ by working with the abelian category of quasi-coherent parabolic sheaves with rational weights on $X$, 
$\mathrm{Par}(X)$, and then taking compact objects inside its $\infty$-categorical derived category $\mathrm{D}(\mathrm{Par}(X))$. It turns out that this actually gives an equivalent category. Let us  briefly explain why.
\begin{itemize}
\item By  Theorem 7.3 of \cite{TV} the abelian categories $\mathrm{Par}(X)$ and $\mathrm{qcoh}(\radice[\infty]{X})$ are equivalent.
\item  From Remark \ref{GR} and Proposition \ref{prop.db.irs} it easily follows that the subcategories of bounded-below objects in $\mathrm{D}(\mathrm{Par}(X)) \simeq \mathrm{D}( \mathrm{qcoh}(\radice[\infty]{X}))$ and in 
$\mathrm{QCoh}(\radice[\infty]{X})$ are equivalent. Since compact objects are bounded, this implies that the subcategory of compact objects in 
$\mathrm{D}(\mathrm{Par}(X))$ coincides with $\Perf(\radice[\infty]{X})$, as desired. 
\end{itemize} 
\end{remark}

\subsection{Exact sequences of $\infty$-categories}
\label{esoinfca}
 Let $\cC$ be a stable $\infty$-category.  We say that two objects $A$ and $A'$ are \emph{equivalent} if there is a   map $A\to A'$ that becomes an isomorphism in the homotopy category of $\cC$.  If $\iota\colon \cC' \to \cC$ is a fully faithful functor, we often view $\cC'$  as a subcategory of $\cC$: accordingly, we will usually denote the image under  $\iota$ of an object $A$ of $\cC'$  simply by $A$ rather than $\iota(A).$ 
We will always assume that subcategories are closed under equivalence. That is, if $\cC'$ is a full subcategory of $\cC,$ $A$ is an object of $\cC',$ and $A'$ is an object of $\cC$ which is equivalent to $A,$ we will always assume that $A'$ lies in $\cC'$ as well.

Recall that if $\cD$ is a full subcategory of $\cC,$ $(\cD)^\bot$ denotes the \emph{right orthogonal} of $\cD$, i.e. the full subcategory of $\cC$ consisting of the objects $B$ such that the Hom-space $\Hom_\cC(A,B)$ is contractible for every object $A \in \cD$. 
Let 
$\{\cC_1, \ldots, \cC_n\}$ be a finite collection of stable subcategories of $\cC$ such that, for all $1 \leq i \leq n-1,$ $\, \cC_{i} \subseteq (\cC_{i+1})^\bot.$ 
Then we denote  by 
$
\langle \cC_1  , \ldots , \cC_n \rangle
$
 the smallest 
 stable subcategory 
of $\cC$ containing all the  subcategories $\cC_i$. 

An \emph{exact sequence} of stable $\infty$-categories is a sequence  
\begin{equation}
\label{dgverdier}
\cA \stackrel{F} \longrightarrow \cB \stackrel{G} \longrightarrow \cC \end{equation}
which is both a fiber and a cofiber sequence in $\SCat.$  This concept  captures  the classical notion of Verdier localization of triangulated categories in the setting of $\infty$-categories: as shown in Section 5.1 of \cite{BGT}, (\ref{dgverdier}) is an exact sequence  if and only if 
the sequence of homotopy categories
$$
\mathrm{Ho}(\cA) \xrightarrow{\mathrm{Ho}(F)}  
\mathrm{Ho}(\cB) 
\xrightarrow{\mathrm{Ho}(G)} 
\mathrm{Ho}(\cC)
$$  
is a classical Verdier localization of triangulated categories (up to idempotent-completion of $\mathrm{Ho}(\cC)$). This implies in particular that the fully-faithfulness of a functor 
between stable $\infty$-categories can be checked at the level of homotopy categories.

The functor $F$ admits a right adjoint $F^R$ exactly if $G$ admits a right adjoint  $G^R$, and similarly for left adjoints. This is proved in   \cite[Proposition 4.9.1]{krause2010localization} for triangulated categories but the extension to $\infty$-categories is straightforward. If $F$ (or equivalently $G$) admits a right adjoint we say that (\ref{dgverdier}) is a \emph{split  exact sequence}. In this case the functor $G^R$ is fully faithful 
and we have that  
$
\cB=\langle G^R(\cC)  ,  \cA \rangle. 
$
As we  indicated earlier, since $G^R$ is fully faithful we will drop it from our notations  whenever this is not likely to create confusion: thus we will denote  
$G^R(\cC)$ simply by $\cC,$  and 
write 
$
\cB=\langle \cC ,   \cA \rangle. 
$

\subsection{Preordered semi-orthogonal decompositions}
\label{psodsec}

 \label{gluing psods}
In this section we  introduce \emph{preordered  semi-orthogonal decompositions}
of $\infty$-categories. This concept was also discussed in \cite{bergh2016geometricity}. See \cite{kuznetsov2015semiorthogonal} for a  survey of  semi-orthogonal decompositions (sod-s). Let $\cC$ be a stable $\infty$-category, and let $P$ be a preordered set. Consider a collection 
of full stable  subcategories $\iota_x\colon \cC_x  
\longrightarrow \cC$ indexed by $x \in P$.

\begin{definition}\label{def psod} 
We say that the subcategories $\cC_x$ form a    \emph{preordered semi-orthogonal decomposition (psod) of type $P$}  if they satisfy the following three properties.
\begin{itemize}
\item For all $x \in P,$ $\cC_x$ is a non-zero  \emph{admissible} subcategory: that is, the embedding  $
\iota_x $ admits a right adjoint and a left adjoint, which we denote by
$$
r_x\colon \cC \longrightarrow \cC_x \quad \text{and} \quad 
l_x\colon \cC \longrightarrow \cC_x .
$$
\item If $y<_P x$, i.e. $y \leq_P  x,$ and $ x \not =y,$ then $\cC_y \subseteq \cC_x^{\bot}.$ 
\item $\cC$ is the smallest stable subcategory of $\cC$ containing all the subcategories $\cC_x,$  $x \in P.$
\end{itemize}
\end{definition}

\begin{definition}
If $\cC$ is equipped with a psod of type $(P,\leq)$,  we write 
$
\cC = \langle  \cC_x,  x \in (P,\leq)  \rangle.
$
\end{definition}

\begin{definition}
Let $\cC = \langle  \cC_x,  x \in (P,\leq)  \rangle$ and $\cD = \langle  \cD_y,  y \in (Q,\leq)  \rangle$ be categories equipped with psod-s. Let 
$F: \cC \to \cD$ be a fully-faithful functor. We say that $F$ is \emph{compatible with the psod-s} if for all $x \in P$ there exists $y \in Q$ such that $F(\cC_x)=\cD_y$. \end{definition}

\begin{remark}
\label{stabletriang}
The definition of psod makes  sense also for classical triangulated categories, and not just for $\infty$-categories. We mentioned in Section \ref{esoinfca} that a sequence of stable 
$\infty$-categories is exact if and only if the corresponding sequence of homotopy categories is a Verdier localization. Also, $F\colon \cA \leftrightarrows \cB\cocolon G$ is an adjoint pair of functors between stable $\infty$-categories if and only if 
$$\mathrm{Ho}(F)\colon\mathrm{Ho}(\cA) \leftrightarrows \mathrm{Ho}(\cB)\cocolon \mathrm{Ho}(G)$$
is an adjoint pair between the homotopy categories. 
As a consequence psod-s can be checked at the level of homotopy categories: that is, a collection of admissible subcategories $\cC_x$ gives a psod of type $P$ of the $\infty$-category $\cC$ if and only if $\mathrm{Ho}(\cC_x)$ gives a psod of type $P$ of the triangulated category $\mathrm{Ho}(\cC).$
\end{remark}

\begin{definition}
Let $P$ and $Q$ be preordered sets. 
The \emph{join  
of $P$ and $Q$}  is the preordered set 
$$ 
P * Q:=(P \coprod Q, \leq_{P * Q})
$$  
defined as follows. Let $x$ and $y$ be in $P \coprod Q$:
\begin{itemize}
\item if $x,y \in P,$ then $x \leq_{P * Q}y$ if and only if $x \leq_P y$;
\item  if $x,y \in Q,$ then $x \leq_{P * Q}y$ if and only if $x \leq_Q y$;
\item  if $x \in P$ and $y \in Q,$ then $x \leq_{P * Q}y$.
\end{itemize} 
\end{definition}

\begin{lemma}
\label{lem:preord}
Let 
$ \,\, 
\cA \stackrel{F}\longrightarrow \cB \stackrel{G}\longrightarrow \cC
\, \, $
be an exact sequence of stable idempotent-complete $\infty$-categories. Assume that $\cA$ is admissible, and that $\cA$ and $\cC$ carry  psod-s of type $P_\cA$ and $P_\cC,$ respectively.   
Then 
$\cB$ carries a canonical psod of type 
$ \, \, 
P_\cB:=P_\cA * P_\cC
$ 
such that for all $x \in P_\cB$ the component $\cB_x$ is equivalent to $\cA_x,$ if $x$ is in $P_\cA,$ and to $\cC_x,$ if $x$ is in $P_\cC.$ 
\end{lemma}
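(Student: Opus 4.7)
The plan is to exploit the admissibility of $\cA$ to obtain a length-two semi-orthogonal decomposition of $\cB$ and then to refine each of its two components using the given psod-s. Admissibility of $\cA$ means that $F$ admits both a left adjoint $F^L$ and a right adjoint $F^R$, and by the discussion in Section~\ref{esoinfca} $G$ then admits fully faithful adjoints $G^L \dashv G \dashv G^R$; this yields the length-two sod $\cB = \langle G^R(\cC),\, F(\cA)\rangle$, in which the semi-orthogonality $F(\cA)\subseteq G^R(\cC)^\bot$ follows at once from $GF = 0$ via the adjunction $G \dashv G^R$.

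I would then set $\cB_z := F(\cA_z)$ for $z\in P_\cA$ and $\cB_z := G^R(\cC_z)$ for $z\in P_\cC$, and verify the three axioms of Definition~\ref{def psod}. The generation axiom is immediate: since $F$ and $G^R$ are fully faithful exact functors, $F(\cA)$ is generated by the $F(\cA_y)$ and $G^R(\cC)$ by the $G^R(\cC_x)$, and these together generate $\cB$ by the length-two sod. Semi-orthogonality splits into three cases: within $P_\cA$ and within $P_\cC$ it follows from the psod-s of $\cA$ and $\cC$ by full-faithfulness of $F$ and $G^R$; across the join, where every $y\in P_\cA$ is by definition strictly below every $x\in P_\cC$, it is exactly the same computation $\Hom_\cB(G^R(C),F(A)) = \Hom_\cC(C,GF(A)) = 0$ that gave the length-two sod.

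The main obstacle will be the admissibility axiom, which requires each $\cB_z\hookrightarrow\cB$ to admit both a left and a right adjoint. For $z\in P_\cA$ this is routine: the inclusion factors as $\cA_z\hookrightarrow \cA\xrightarrow{F}\cB$, and both factors carry both adjoints by hypothesis, so their composites supply the two adjoints. For $z\in P_\cC$ the inclusion factors as $\cC_z\hookrightarrow\cC\xrightarrow{G^R}\cB$; the first factor has both adjoints, and $G^R$ comes with the left adjoint $G$, yielding the left adjoint of $\cB_z\hookrightarrow \cB$. The delicate step is producing a right adjoint, which reduces to exhibiting a right adjoint to $G^R\colon\cC\to\cB$ itself. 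Here the full admissibility of $\cA$ (the existence of $F^L$, not merely $F^R$) is essential: I would use it to argue that $G^R(\cC) = \cA^\bot$ is admissible on both sides in $\cB$, constructing the right adjoint to $G^R$ by a fibre-sequence argument that dualises the $F^R$-construction producing the left adjoint $G^R G$ of the inclusion. Composing with the right adjoint of $\cC_z\hookrightarrow\cC$ then yields the right adjoint of $\cB_z\hookrightarrow\cB$, and the identifications $\cB_z\simeq \cA_z$ and $\cB_z\simeq \cC_z$ follow at once from the full-faithfulness of $F$ and $G^R$; canonicality of the whole construction is transparent, since every ingredient (the length-two sod, the embeddings $F$ and $G^R$, the psod-s on $\cA$ and $\cC$) is itself canonical.
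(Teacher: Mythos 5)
Your proof takes the same overall route as the paper: set $\cB_z := F(\cA_z)$ for $z \in P_\cA$ and $\cB_z := G^R(\cC_z)$ for $z \in P_\cC$, then verify the three psod axioms, with generation and semi-orthogonality following routinely from full-faithfulness and $GF\simeq 0$. The paper's own proof is in fact terser: after observing that $G$ has a right adjoint, it simply asserts that the $\cC_x$ become admissible subcategories of $\cB$ under $G^R$ and that they are right orthogonal to $F(\cA)$. You have correctly singled out the only genuinely delicate point, namely that each inclusion $G^R(\cC_z)\hookrightarrow\cB$ must admit a \emph{right} adjoint, and that (since $\cC_z$ is already admissible in $\cC$) this reduces to producing a right adjoint for $G^R(\cC)=F(\cA)^\bot\hookrightarrow\cB$.

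The proposed fix, however, does not work. Dualising the $F^R$-construction means replacing $B\mapsto\Cone(FF^R B\to B)$ by $B\mapsto\mathrm{fib}(B\to FF^L B)$; but the latter does not land in $F(\cA)^\bot$. Applying $\Hom(-,FA)$ to the triangle $\mathrm{fib}(B\to FF^L B)\to B\to FF^L B$ and using the adjunction isomorphism $\Hom(FF^L B,FA)\cong\Hom(F^L B,A)\cong\Hom(B,FA)$ (via the unit), one finds $\Hom(\mathrm{fib},FA)=0$, i.e.\ $\mathrm{fib}(B\to FF^L B)$ lies in the \emph{left} orthogonal ${}^\bot F(\cA)$. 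The construction you describe therefore yields a right adjoint to the inclusion ${}^\bot F(\cA)\hookrightarrow\cB$, not to $F(\cA)^\bot\hookrightarrow\cB$; the two orthogonals coincide only under extra structure such as a Serre functor. More precisely, admissibility of $\cA$ in $\cB$ gives that $\cA^\bot$ is left admissible and that ${}^\bot\cA$ is right admissible, but it does \emph{not} follow formally that $\cA^\bot$ is right admissible — equivalently, $G^R$ need not have a right adjoint (a recollement has six functors, not seven). So the admissibility axiom of Definition~\ref{def psod} for the pieces $\cB_z$, $z\in P_\cC$, is not established by your argument.

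It is only fair to point out that the paper's one-sentence proof silently glosses over exactly this same point. In all the paper's applications this is harmless, because the psod-s being glued arise from \cite[Theorem 4.7]{bergh2016geometricity} where all individual factors (including those lying to the left) are proved admissible on geometric grounds, so the needed right adjoints exist for independent reasons. To make the abstract statement of Lemma~\ref{lem:preord} airtight one should either add the hypothesis that $F(\cA)^\bot\hookrightarrow\cB$ (equivalently $G^R$) admits a right adjoint, or weaken Definition~\ref{def psod} to demand only the one-sided adjoints that are actually consumed downstream (Remark~\ref{splitsplit} and Lemma~\ref{motsplitlem} only ever use the right adjoint of the ``inner'' inclusion in each split exact sequence).
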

\begin{proof}
It follows from the assumptions that the functor $G$ has a right adjoint $G^R\colon\cC \to \cB$. The  admissible subcategories $\cC_x,$ $x \in P_\cC,$ are  admissible subcategories of $\cB$ under the image of $G^R,$ and they are all right orthogonal to the image of $\cA$ under $F$. In particular, they are right orthogonal to  $F(\cA_x),$ $x \in P_\cA,$ and this concludes the proof. 
\end{proof}

\subsection{The Chern character}
\label{dgchern}
In section \ref{ncmols} we will study  the Chern character in the logarithmic setting.  
For later reference, we give below an account of the Chern character that follows closely \cite{BGT}.

 Let $\cS_\infty$ be the $\infty$-category of spectra. It follows from \cite{BGT} that the Chern character can be defined in the abstract setting of $\infty$-categories as a natural transformation between additive invariants 
\begin{equation}
\label{chernchar}
\mathrm{ch}\colon K(-) \Rightarrow \mathrm{HH}(-)\colon \mathrm{Cat}^{\mathrm{perf}}_{\infty, \kappa} \longrightarrow \cS_\infty 
\end{equation}
where:
\begin{itemize}
\item $K(-)$  
is the algebraic K-theory,
\item $\mathrm{HH}(-)$ 
is the Hochschild complex viewed as an object in  spectra.
\end{itemize}
As explained in Section 10 of \cite{BGT}  the Chern character (\ref{chernchar}) is uniquely determined by the choice of the element 
$1 \in \mathrm{HH}(\Perf(\kappa)) \simeq \kappa$. 
Assume now that $\kappa$ is a field of characteristic $0.$ Then the Chern character (\ref{chernchar}) captures the ordinary de Rham Chern character, we refer  to \cite{cualduararu2005mukai} for a thorough discussion of these aspects.  More precisely, let $X$ be a smooth and proper scheme over $\kappa$.   Denote by $\mathrm{H}_{\mathrm{dR}}(X)$ be the de Rham cohomology of 
$X,$ which is  the hypercohomology of the de Rham complex. The 
HKR theorem gives an isomorphism $ \, 
\mathrm{HH}_0(\Perf(X)) \cong  \bigoplus_{k\geq 0} \mathrm{H}_{\mathrm{dR}}^{2k}(X) \,  
$. 
Then the composition
$$
\mathrm{ch}_{\mathrm{dR}}\colon K_0(X) \stackrel{\mathrm{ch}  }\longrightarrow \mathrm{HH}_0(\Perf(X)) \stackrel{\cong} \longrightarrow \bigoplus_{k\geq 0} \mathrm{H}_{\mathrm{dR}}^{2k}(X), 
$$
recovers the ordinary Chern character  taking values in the even de Rham cohomology of $X$.

\section{Perfect complexes over infinite root stacks}
\label{GC}
In this section we construct sod-s  for infinite root stacks. In \ref{rooteff} and \ref{sncd}   we    treat separately the case of root stacks of a single Cartier divisor, and of a simple normal crossings divisor with an arbitrary number of components. We start  by reviewing the results 
obtained  in   \cite{ishii2011special} and \cite{bergh2016geometricity} for  finite root stacks of simple normal crossings divisors. We construct recursively compatible sod-s for the $\infty$-categories of perfect complexes of these root stacks, for a cofinal subset of indices. This will be key to constructing sod-s  on the infinite root stack. 

 In Section \ref{beyond} we explain how to extend these results beyond the normal crossing case: in Section \ref{ncd} we extend our investigation  to  root stacks of general (not necessarily simple) normal crossings divisors, and in Section \ref{reducetoncviacan} we discuss the case of log stacks with simplicial log structure. 

\subsection{Root stacks of a regular divisor}
\label{rooteff}
Let $X$ be an algebraic stack, and $D\subset X$ a regular Cartier divisor. We use the notations of Section \ref{sec:smooth} in the preliminaries. Recall in particular that we denote by by $g_{r',r}$ the projection $\radice[r']{(X,D)}\to \radice[r]{(X,D)}$ for $r\mid r'$, by $D_r$ the universal Cartier divisor on $\radice[r]{(X,D)}$, i.e. the reduction of the preimage $g_{r,1}^{-1}(D)\subset \radice[r]{(X,D)}$. Further, we denote by $i_r\colon D_r\to \radice[r]{(X,D)}$ the closed embedding. 

\begin{lemma}
\label{verdierdivisor} 
The category $
\Perf(D_r)$ splits as the direct sum of $r$ copies of 
$\Perf(D).$ More precisely, if $\mathbb{Z}_r$ is the Cartier dual 
of $\mu_r,$  there are natural equivalences  
$$
\Perf(D_r) \stackrel{(1)}\simeq  \bigoplus_{\chi \in \mathbb{Z}_r} \Perf(D_r)_\chi  \stackrel{(2)}\simeq  \Perf(D) \otimes \Perf(\mathscr{B} \mu_r) \simeq \Perf(D) \otimes \bigoplus_{\chi \in \mathbb{Z}_r} \Perf(\kappa).
$$
\end{lemma}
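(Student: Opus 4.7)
The plan is to use the fact, already recorded in the discussion preceding the lemma, that the projection $f_{r,1}\colon D_r \to D$ is a $\mu_r$-gerbe (as established by the fiber square with $\mathscr{B}\mathbb{G}_m \xrightarrow{(-)^r} \mathscr{B}\mathbb{G}_m$). Since $\mu_r = \Spec \kappa[\mathbb{Z}_r]$ is diagonalizable over any base, every quasi-coherent sheaf on $D_r$ carries a canonical weight decomposition under the inertial $\mu_r$-action, and this will deliver all three equivalences.

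For equivalence (1), I would invoke the standard character-weight decomposition for sheaves on a $\mu_r$-gerbe: since $\mu_r$ is linearly reductive, any quasi-coherent sheaf on $D_r$ splits canonically as a direct sum indexed by $\mathbb{Z}_r$, giving $\Qcoh(D_r) \simeq \bigoplus_\chi \Qcoh(D_r)_\chi$. This restricts to $\Perf(D_r)$ because the decomposition respects duals and a direct summand of a dualizable object remains dualizable, so a perfect complex decomposes into its weight components, each of which is again perfect.

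For equivalence (2), the key step is identifying each weight piece $\Perf(D_r)_\chi$ with $\Perf(D)$. The weight-zero piece is tautologically the essential image of $f_{r,1}^*\colon \Perf(D)\to \Perf(D_r)$, yielding $\Perf(D_r)_0 \simeq \Perf(D)$. For nonzero $\chi$, I would restrict the tautological line bundle $\mathscr{M}$ on $\radice[r]{(X,D)}$ (which satisfies $\mathscr{M}^{\otimes r} \simeq g_{r,1}^*\cO_X(D)$) to $D_r$; by construction $\mathscr{M}|_{D_r}$ is a line bundle of $\mu_r$-weight one, so tensoring with $\mathscr{M}^{\otimes k}|_{D_r}$ yields equivalences $\Perf(D_r)_\chi \xrightarrow{\sim} \Perf(D_r)_{\chi+k}$. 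Combining these produces $\Perf(D_r)_\chi \simeq \Perf(D)$ for every character $\chi$, so the left-hand side of (2) is equivalent to $\bigoplus_{\chi \in \mathbb{Z}_r} \Perf(D)$. The final equivalence in the statement is the standard splitting $\Perf(\mathscr{B}\mu_r) \simeq \bigoplus_{\chi \in \mathbb{Z}_r} \Perf(\kappa)$ (again by diagonalizability of $\mu_r$), and the middle term of (2) is matched with the same direct sum by distributivity of $\otimes$ over $\oplus$ in the $\kappa$-linear setting.

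The main obstacle is the identification $\Perf(D_r)_\chi \simeq \Perf(D)$ for $\chi \neq 0$. Zariski locally $D_r$ is a product $D \times \mathscr{B}\mu_r$, but the gerbe $D_r \to D$ is classified by (the mod-$r$ image of) the normal bundle $\cO_X(D)|_D$ and need not be globally trivial; a priori the nonzero weight pieces are categories of twisted sheaves on $D$, which could fail to be equivalent to $\Perf(D)$. What saves the argument is precisely the tautological line bundle from the root stack construction: it furnishes a canonical global weight-one line bundle on $D_r$ that rigidifies the gerbe and simultaneously untwists every nonzero weight category.
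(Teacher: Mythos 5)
Your argument is correct and follows the same route as the paper's one-sentence sketch: character decomposition by the diagonalizable banding group, followed by using a canonical weight-one line bundle on $D_r$ to identify each weight piece with $\Perf(D)$. You are more explicit than the paper about the point that the $\mu_r$-gerbe $D_r \to D$ is in general nontrivial, so a priori the nonzero weight pieces could be categories of twisted sheaves; the restriction of the tautological line bundle from the root stack construction is exactly what the paper's ``twisted structure sheaf $\cO_D^\chi$'' is quietly supplying to untwist them.
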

\begin{proof}
This is a well-known fact, that applies more generally to gerbes banded by a diagonalizable group scheme, so we limit ourselves to a brief sketch. Equivalence $(1)$ comes from the character decomposition of $\Perf(D_r).$  If $\chi$ is in $\mathbb{Z}_r,$ let $\kappa(\chi)$ be the corresponding $\mu_r$-representation. Then equivalence $(2)$ 
maps the twisted structure sheaf $\cO_{D}^\chi$ to $\cO_{D} \otimes \kappa(\chi).$  \end{proof}

\begin{lemma}
Let $r, r' \in \mathbb{N},$ and $r \mid r'.$ Then the pull-back functors 
$$
g_{r',r}^*\colon \Perf(\radice[r]{(X,D)}) \longrightarrow 
 \Perf (\radice[r']{(X,D)})
$$
are fully faithful.  
\end{lemma}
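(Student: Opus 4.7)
The plan is to reduce the claim to the classical identity $g_{*}\cO = \cO$ for a root-stack projection, together with the projection formula and adjunction. By Remark~\ref{stabletriang} full faithfulness can be checked on homotopy categories, so no $\infty$-categorical subtlety intervenes beyond the usual sheaf-theoretic input.

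First I would reduce to the case $r=1$. Recall from Section~\ref{sec:irs} that $\radice[r']{(X,D)}$ is canonically the $(r'/r)$-th root stack of $\radice[r]{(X,D)}$ along the universal Cartier divisor $D_r$, and that $g_{r',r}$ is identified with the resulting projection
\[
g_{r',r}\colon \radice[r'/r]{(\radice[r]{(X,D)},D_r)}\longrightarrow \radice[r]{(X,D)}.
\]
Thus it is enough to prove that for any algebraic stack $Y$ with a regular effective Cartier divisor $E\subset Y$ and any $n\in\bN$, the pullback functor $g_{n,1}^{*}\colon\Perf(Y)\to\Perf(\radice[n]{(Y,E)})$ is fully faithful.

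Next I would establish that $(g_{n,1})_{*}\cO_{\radice[n]{(Y,E)}} \simeq \cO_{Y}$. This can be checked smooth-locally on $Y$, where we may assume $Y=\Spec A$ is affine and $E$ is given by a global equation $f=0$. In that case, as recalled in Section~\ref{sec:smooth}, $\radice[n]{(Y,E)}\simeq [\Spec A[x]/(x^{n}-f)\,/\,\mu_{n}]$, with $g_{n,1}$ induced by the inclusion $A\hookrightarrow A[x]/(x^{n}-f)$. The pushforward of the structure sheaf is the $\mu_{n}$-invariant subring of $A[x]/(x^{n}-f)=\bigoplus_{i=0}^{n-1}A\cdot x^{i}$, which is exactly $A$, since $\mu_{n}$ acts with weight $i$ on the summand $A\cdot x^{i}$. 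As the root-stack projection is finite and flat (in particular affine), higher direct images vanish and this identification is derived.

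With this in hand, I would invoke the projection formula: for any $F\in\Perf(Y)$,
\[
(g_{n,1})_{*}\,g_{n,1}^{*}F \;\simeq\; F\otimes^{L}_{\cO_{Y}} (g_{n,1})_{*}\cO_{\radice[n]{(Y,E)}} \;\simeq\; F.
\]
The projection formula is valid because $g_{n,1}$ is a representable, finite, tame morphism (the source is a tame Artin stack over $Y$). Combining this identification with the $(g_{n,1}^{*},(g_{n,1})_{*})$-adjunction gives, for every pair of perfect complexes $E,F$ on $Y$,
\[
\Hom_{\radice[n]{(Y,E)}}\!\bigl(g_{n,1}^{*}E,\,g_{n,1}^{*}F\bigr) \;\simeq\; \Hom_{Y}\!\bigl(E,\,(g_{n,1})_{*}g_{n,1}^{*}F\bigr) \;\simeq\; \Hom_{Y}(E,F),
\]
and the composite equivalence is the canonical map induced by $g_{n,1}^{*}$. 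This proves full faithfulness in the case $r=1$ and, by the reduction step, in general.

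The only real subtlety is the identity $(g_{n,1})_{*}\cO=\cO$; once this is in place everything is formal. I expect no genuine obstacle, since both the pushforward computation and the projection formula are standard for tame root stacks, and the local model makes the $\mu_{n}$-invariance calculation transparent.
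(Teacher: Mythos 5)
Your proposal is correct and follows essentially the same route as the paper: both rest on the identity $g_{r',r,*}\cO \simeq \cO$ (a consequence of $g_{r',r}$ being a relative coarse moduli space map) and deduce full faithfulness from it by the standard projection-formula/adjunction argument. The paper states the pushforward identity abstractly and cites \cite[Lemma 4.4]{bergh2016geometricity} for the deduction, whereas you verify $g_*\cO\simeq\cO$ by the explicit local $\mu_n$-invariants computation and spell out the formal step yourself; the mathematical content is the same.
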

\begin{proof}
Since $g_{r',r}$ is a relative coarse moduli space map, the natural map
$
 \cO_{ \radice[r]{(X,D)}} \longrightarrow g_{r',r,*}\cO_{ \radice[r']{(X,D)}} 
$
is an equivalence in $\Perf( \radice[r]{(X,D)}).$ 
This easily implies that $g_{r',r}^*$ is fully faithful by the adjunction with ${g_{r',r}}_*$ and the projection formula (see for instance \cite[Lemma 4.4]{bergh2016geometricity}). 
\end{proof}

Let $\mathbb{Z}_r^* = \mathbb{Z}_r \setminus \{0\}$ be the set of non-trivial characters of $\mu_r.$ 
In order to keep track of indices it will be convenient to identify  $\mathbb{Z}_r$ and 
$\mathbb{Z}_r^*$ with 
subsets of $\mathbb{Q}/\mathbb{Z}  = \mathbb{Q} \cap (-1,0]$ as follows:
$$
\mathbb{Z}_r \cong \left\{-\frac{r-1}{r}, \hdots, -\frac{1}{r}, 0\right\} \subset \mathbb{Q} \cap (-1,0], \quad  
\mathbb{Z}_r^* \cong \left\{-\frac{r-1}{r}, \hdots, -\frac{1}{r}\right\} \subset \mathbb{Q} \cap (-1,0].
$$ 
We equip $\mathbb{Z}_r$ and $\mathbb{Z}_r^*$ with the total order $\leq$ given by 
$$-\frac{r-1}{r} < -\frac{r-2}{r} < \hdots < -\frac{1}{r} < 0.$$ Here $-\frac{k}{r}$ should be thought of as the element $r-k$ in $\{1,\hdots, r\}$, equipped with the standard ordering. This (perhaps unusual) identification will be convenient when we pass to the limit for $r\to \infty$.

Following Theorem 4.7 of  \cite{bergh2016geometricity}, for every $\chi \in \mathbb{Z}_r^*$ we consider the fully faithful embedding
$$
\Phi_\chi\colon \Perf(D) \to \Perf( \radice[r]{(X,D)})
$$
given by the composite
$$
\Perf(D) \stackrel{\simeq} \longrightarrow \Perf(D_r)_\chi \stackrel{\subset} \longrightarrow  \Perf(D_r) \stackrel{i_{r,*}}\longrightarrow  \Perf(\radice[r]{(X,D)}).
$$
Note that if we see $\chi$ as an element of $\{1,\hdots, r\}$, in the notation of \cite{bergh2016geometricity} the objects of the image of $\Phi_\chi$ are actually equipped with the action corresponding to the character $-\chi\equiv r-\chi\in \bZ_r$.
\begin{remark}
Each individual summand $\Perf(D_r)_\chi$ of $\Perf(D_r)$ embeds fully faithfully in the category $\Perf(\radice[r]{(X,D)})$ via $\Phi_\chi,$ which is a restriction of $ 
i_{r, *} 
$ to $\Perf(D_r)_\chi.$ However  
the functor 
$ 
i_{r, *} 
$ 
itself is not fully faithful.
\end{remark}
We introduce the following notations: 
\begin{itemize}
\item Let $\chi$ be in $\mathbb{Z}_r.$ If $\chi \neq 0$  we denote by 
$\mathcal{A}_\chi \subset \Perf(\radice[r]{(X,D)})$ the image of $\Perf(D)$ under $\Phi_\chi,$ and we denote   by $\mathcal{A}_0 \subset \Perf(\radice[r]{(X,D)})$ the image of $\Perf(X)$  under $g_{r,1}^*.$
\item We denote by $Q_{r }$ the subcategory of $\Perf(\radice[r]{(X,D)})$  generated by the subcategories $\mathcal{A}_\chi$ for $\chi \in \mathbb{Z}_r^*$.
\end{itemize}
 The next theorem is proved in \cite{bergh2016geometricity} using the language of classical triangulated categories, but the proof applies without variations to the $\infty$-setting.
\begin{proposition}[{\cite[Theorem 4.7]{bergh2016geometricity}}]\label{prop: sod}
 \mbox{ }
\begin{enumerate}[leftmargin=*] 
\item The category $\mathcal{A}_0$ is an admissible subcategory of  $\Perf(\radice[r]{(X,D)})$.
\item  The right orthogonal of $\cA_0$ inside $\Perf(\radice[r]{(X,D)})$ is the subcategory $Q_{r}$ introduced above. There is a psod of type 
$(\mathbb{Z}_r^*, \leq)$ 
$$
Q_{r } =  \langle  \cA_\chi ,  \chi \in  (\mathbb{Z}_r^*, \leq)  \rangle.
$$ 
\item By items (1) and (2), the category $\Perf(\radice[r]{(X,D)})$ has a psod of type $(\mathbb{Z}_r, \leq)$ 
$$\Perf(\radice[r]{(X,D)})= \langle  Q_r, \cA_0   \rangle = \langle \cA_\chi, \chi \in  (\mathbb{Z}_r, \leq)  \rangle.$$
\end{enumerate}
\end{proposition}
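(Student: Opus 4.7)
The plan is to reduce to the classical triangulated statement, Theorem 4.7 of \cite{bergh2016geometricity}, by means of Remark \ref{stabletriang}: a psod in a stable $\infty$-category $\cC$ is the same data as a psod of its triangulated homotopy category, and the existence of the requisite adjoints is also detected on homotopy categories. Thus it is enough to observe that the subcategories $\mathrm{Ho}(\cA_\chi)\subset \mathrm{Ho}(\Perf(\radice[r]{(X,D)}))$ assemble into a psod of the homotopy category, which is exactly the content of loc.\ cit.

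For the reader's convenience I would outline the main steps of the Bergh--Lunts--Schn\"urer argument. Admissibility of $\cA_0$ rests on the fact that $g_{r,1}$ is a relative coarse moduli space map of a tame algebraic stack: the projection formula together with the identity $g_{r,1,*}\cO_{\radice[r]{(X,D)}}\simeq \cO_X$ gives the fully-faithfulness of $g_{r,1}^*$ used in the preceding lemma, and both adjoints of the inclusion $\cA_0\hookrightarrow \Perf(\radice[r]{(X,D)})$ are constructed from $g_{r,1,*}$ (combined with a twist by the relative dualizing sheaf to obtain the left adjoint). Admissibility of each $\cA_\chi$ for $\chi\in \mathbb{Z}_r^*$ follows from Lemma \ref{verdierdivisor}, which identifies $\cA_\chi$ with a direct summand of $\Perf(D_r)$; the adjoints of $\Phi_\chi$ are then built from the adjoints of the closed immersion $i_r$ composed with projection to the $\chi$-eigenspace on the $\mu_r$-gerbe $D_r\to D$.

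The semi-orthogonality relations $\cA_\chi\subseteq \cA_{\chi'}^{\bot}$ for $\chi<\chi'$ in $(\mathbb{Z}_r,\leq)$ amount to a $\Hom$-computation: one uses the exact triangle $\cO(-D_r)\to \cO \to i_{r,*}\cO_{D_r}$ to resolve $i_r^* i_{r,*}$ and reduce to a character orthogonality statement for distinct $\mu_r$-representations on the gerbe $D_r\to D$. Generation of the whole category by the $\cA_\chi$ is proved by an induction that again invokes this Koszul-type triangle to peel twists off step by step, starting from an object of $\cA_0$ obtained via $g_{r,1}^*$ of the pushforward. The main point requiring attention when transplanting the argument of \cite{bergh2016geometricity} into our setting is the ordering convention: the identification of $\mathbb{Z}_r$ with $\{-\frac{r-1}{r},\ldots, 0\}\subset \mathbb{Q}\cap(-1,0]$, tailored to the eventual passage to the limit $r\to \infty$, reverses the natural ordering on characters adopted in loc.\ cit., and one must check that the direction of semi-orthogonality in Definition \ref{def psod} is consistent with this identification; once this is matched up, parts (1), (2) and (3) follow directly from the $\infty$-categorical lift of Theorem 4.7 of \cite{bergh2016geometricity}.
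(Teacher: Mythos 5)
Your proposal is correct and takes essentially the same approach as the paper: the paper simply cites \cite{bergh2016geometricity}, Theorem 4.7, noting that the triangulated-category argument carries over unchanged to the $\infty$-setting, which is exactly your reduction via Remark \ref{stabletriang}. The extra material you supply — the sketch of the Bergh--Lunts--Schn\"urer argument and the remark about the reversal of the ordering convention in the identification $\bZ_r \cong \{-\frac{r-1}{r},\ldots,0\}\subset \bQ\cap(-1,0]$ (cf.\ the paper's comment that $\Phi_\chi$ actually carries the action of character $-\chi\equiv r-\chi$) — is accurate and adds useful context that the paper leaves implicit.
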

Consider the directed system of root stacks $\{\radice[n!]{(X,D)}\}_{n\in \bN}$ whose indices are the factorials, with the natural projections 
$$
\cdots \longrightarrow \radice[3!]{(X,D)}  \xrightarrow{g_{3!,2!}} \radice[2!]{(X,D)}  \xrightarrow{g_{2!,1!}}  \radice[1!]{(X,D)} = X.
$$
We point out once again that this subsystem of indices is identified with $\bN$ with its standard ordering, i.e. if $n\leq m$ we have a map $\radice[m!]{(X,D)}\xrightarrow{g_{m!,n!}} \radice[n!]{(X,D)}$, and these are compatible in the obvious sense.
We will inductively construct a tower of compatible 
sod-s on the categories of perfect complexes on these root stacks. This is done in Proposition \ref{sod!} below by applying iteratively Proposition \ref{prop: sod}. For $n\geq 2$, the sod-s we will construct on 
$\Perf(\radice[n!]{(X,D)})$ will be different from the sod-s on 
 the category of perfect complexes on the root stacks of 
 $(X,D)$ 
 given directly by 
Proposition \ref{prop: sod} (see Example \ref{ex:weird.psod}).

We will equip the set 
$\mathbb{Q}/\mathbb{Z} = \mathbb{Q} \cap (-1,0]$ with a  total order $\leq^ !$ which is not the restriction of the usual ordering of the real numbers.
First of all we define the order $\leq^!$ on $\bZ_{n!}$ recursively, as follows.
\begin{itemize}
\item On $\bZ_{2!}=\{-\frac{1}{2},0\}$ we set $-\frac{1}{2}<^! 0$.
\item Having defined $\leq^!$ on $\bZ_{(n-1)!}$, let us consider the natural short exact sequence
$$
0\to \bZ_{(n-1)!}\to \bZ_{n!}\xrightarrow{\pi_n} \bZ_n\to 0,
$$
where $\bZ_n=\{-\frac{n-1}{n},\hdots, -\frac{1}{n},0\}$ is equipped with the standard order $\leq$ described above. Given two elements $a,b\in \bZ_{n!}$, we set $a\leq^! b$ if either $\pi_n(a)<\pi_n(b)$, or $\pi_n(a)=\pi_n(b)$ and $a \leq^! b$ in $\bZ_{(n-1)!}$, where we are identifying the fiber $\pi_n^{-1}(\pi_n(a))\subseteq \bZ_{n!}$ with $\bZ_{(n-1)!}$ in the canonical manner.
\end{itemize}

For example, on $\bZ_{3!}=\{-\frac{5}{6}, -\frac{4}{6},-\frac{3}{6},-\frac{2}{6},-\frac{1}{6},0\}$, the resulting ordering is described by
$$
-\frac{5}{6} <^! -\frac{2}{6} <^! -\frac{4}{6} <^!-\frac{1}{6} <^! -\frac{3}{6} <^! 0.
$$
Now every element 
in $\mathbb{Q} \cap (-1,0]$ can be written as 
$-\frac{p}{n!}$ for some $p \in \mathbb{N}$ and $n \in \mathbb{N}\setminus\{0\}.$ This expression is unique if we require $n$ to be as small as possible, and we call this the \emph{normal factorial form}.  Let $\chi = -\frac{p}{n!}, \chi'= -\frac{q}{m!} \in \mathbb{Q} \cap (-1,0]$ be in normal factorial form. 
We write $\chi <^! \chi'$ if:
\begin{itemize}
\item $n > m$, or
\item $n=m$ and $-\frac{p}{n!} <^! -\frac{q}{n!}$ in $\bZ_{n!}$.
\end{itemize}
For example, with this ordering we have $-\frac{1}{24}<^!-\frac{2}{6}<^!-\frac{4}{6}<^!-\frac{1}{2}$. 

\begin{proposition}\label{sod!}
 \mbox{ }
For every $n \in \mathbb{N}$ the category $\Perf(\radice[n!]{(X,D)})$ has a psod of type $(\mathbb{Z}_{n!}, \leq^!)$
$$
\Perf(\radice[n!]{(X,D)}) = \langle  \cA_\chi^!, \chi \in (\mathbb{Z}_{n!}, \leq^!)  \rangle,
$$
where   
\begin{itemize} 
\item  $\mathcal{A}_0^! \simeq \Perf(X)$, and
\item  $\mathcal{A}_\chi^! \simeq \Perf(D)$ for all $\chi \in \mathbb{Z}_{n!}^*$.
\end{itemize}
Further, for all $n \in \mathbb{N}$ the functor 
$$g_{(n+1)!, n!}^*: \Perf(\radice[n!]{(X,D)}) \longrightarrow \Perf(\radice[(n+1)!]{(X,D)})
$$
is compatible with the  psod-s. 
\end{proposition}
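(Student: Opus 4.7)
The plan is to prove the statement by induction on $n$, using Proposition \ref{prop: sod} at each stage to produce a ``coarse'' psod and then refining its pieces. The base case $n=1$ is vacuous: $\radice[1!]{(X,D)}=X$, $\bZ_{1!}=\{0\}$, and the only piece is $\cA_0^!=\Perf(X)$. For the inductive step, I would exploit the canonical identification $\radice[(n+1)!]{(X,D)} \simeq \radice[n+1]{(\radice[n!]{(X,D)}, D_{n!})}$ from Section \ref{sec:irs} in order to apply Proposition \ref{prop: sod} to the pair $(\radice[n!]{(X,D)}, D_{n!})$ with root index $n+1$.

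This yields a psod
$$\Perf(\radice[(n+1)!]{(X,D)}) = \langle \cA_\chi^{\mathrm{out}}, \chi \in (\bZ_{n+1}, \leq) \rangle,$$
where $\cA_0^{\mathrm{out}} \simeq \Perf(\radice[n!]{(X,D)})$ is the essential image of $g_{(n+1)!, n!}^*$ and $\cA_\chi^{\mathrm{out}} \simeq \Perf(D_{n!})$ for $\chi \in \bZ_{n+1}^*$. I would then refine each outer piece into a psod of type $(\bZ_{n!}, \leq^!)$: the inductive hypothesis does exactly this for $\cA_0^{\mathrm{out}}$, producing one $\Perf(X)$ piece at $0 \in \bZ_{n!}$ and $(n!-1)$ copies of $\Perf(D)$, while Lemma \ref{verdierdivisor} presents each $\cA_\chi^{\mathrm{out}}$ with $\chi \neq 0$ as an orthogonal direct sum $\bigoplus_{\psi \in \bZ_{n!}} \Perf(D)$, which I regard as a psod indexed by $(\bZ_{n!}, \leq^!)$ (any ordering yields a valid psod, since the summands are mutually orthogonal). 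Iterated application of Lemma \ref{lem:preord} then glues the coarse psod with these refinements into a single psod on $\Perf(\radice[(n+1)!]{(X,D)})$ indexed by the iterated lexicographic join with outer factor $(\bZ_{n+1}, \leq)$ and inner factors $(\bZ_{n!}, \leq^!)$.

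By the recursive definition of $\leq^!$ from the short exact sequence $0 \to \bZ_{n!} \to \bZ_{(n+1)!} \to \bZ_{n+1} \to 0$, this iterated join coincides with $(\bZ_{(n+1)!}, \leq^!)$; the unique $\Perf(X)$ piece sits at $0 \in \bZ_{(n+1)!}$ and all remaining pieces are copies of $\Perf(D)$, as required. Compatibility of the pull-back $g_{(n+1)!, n!}^*$ with the psod-s is then essentially automatic from the construction: its essential image is $\cA_0^{\mathrm{out}}$, and the refined psod on $\cA_0^{\mathrm{out}}$ is literally transported from the inductive psod on $\Perf(\radice[n!]{(X,D)})$ along the inclusion $\bZ_{n!} \hookrightarrow \bZ_{(n+1)!}$ given by the kernel of $\pi_{n+1}$. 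I expect the main bookkeeping step to be a careful verification that the iterated join of the coarse and refined psod-s really does reproduce $\leq^!$ on the nose, which requires keeping straight the two orderings in play (standard $\leq$ on $\bZ_{n+1}$ versus $\leq^!$ on $\bZ_{n!}$) and fixing the identification of each fiber of $\pi_{n+1}$ with $\bZ_{n!}$; the worked example on $\bZ_{3!}$ in the text provides a useful sanity check.
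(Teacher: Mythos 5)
Your proposal is correct and follows essentially the same route as the paper: both proceed by induction using the identification $\radice[(n+1)!]{(X,D)} \simeq \radice[n+1]{(\radice[n!]{(X,D)}, D_{n!})}$, apply Proposition \ref{prop: sod} for the coarse psod, refine the $\cA_0^{\mathrm{out}}$ piece by the inductive hypothesis and the remaining pieces by the orthogonal splitting of Lemma \ref{verdierdivisor}, and glue via Lemma \ref{lem:preord}, checking against the recursive definition of $\leq^!$. The only cosmetic differences are that you start the induction at $n=1$ (where the paper starts at $n=2$) and iterate the join over all $n+1$ outer pieces, whereas the paper first groups the non-trivial pieces into $Q_n$ and applies the join once to $\langle Q_n, \cB_0 \rangle$.
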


We remark again that for a given $\chi \in \mathbb{Z}_{n!}^*$, it is not necessarily the case that $\cA_\chi=\cA_\chi^!$ as subcategories of $\Perf(\radice[n!]{(X,D)})$ (see Example \ref{ex:weird.psod} below).

\begin{proof}
We construct the sod with 
the desired properties on $\Perf(\radice[n!]{(X,D)})$ inductively.

{\bf Basis: $n=2.$} We take the sod on 
$\Perf(\radice[2!]{(X,D)})$ given by Proposition \ref{prop: sod}. We take $\cA_0$  and $\cA_{\frac{1}{2}}$ to be the same as in Proposition \ref{prop: sod}. This clearly gives a psod of $\Perf(\radice[2]{(X,D)})$ of type $(\mathbb{Z}_2, \leq^!)$ satisfying the properties of the claim.  

{\bf Inductive step: $n-1 \to n.$}  Recall that there is a natural identification 
\begin{equation}
\label{eq!}
\radice[n!]{(X,D)} \simeq \radice[n]{(\radice[(n-1)!]{(X,D)}, D_{(n-1)!})}.
\end{equation}
Further, the projection
$$
\radice[n!]{(X,D)} \simeq \radice[n]{(\radice[(n-1)!]{(X,D)}, D_{(n-1)!})} \longrightarrow \radice[(n-1)!]{(X,D)}
$$
coincides under this identification with the map $g_{n!, (n-1)!}.$ 
We then apply Proposition \ref{prop: sod} to (\ref{eq!}). This yields a psod of type 
$(\mathbb{Z}_n, \leq)$
$$
\Perf(\radice[n!]{(X,D)}) = \langle  Q_n, \cB_0     \rangle
=  \langle  \cB_\zeta, \zeta \in \mathbb{Z}_n   \rangle,
$$
where:
\begin{itemize}
\item the subcategory $\cB_0 \simeq \Perf(\radice[(n-1)!]{(X,D)})$ is given by the image of $g_{n!, (n-1)!}^*$, and
\item the subcategory $\cB_\zeta \simeq 
\Perf(D_{n!})_\zeta \simeq \Perf(D_{(n-1)!})$ is given by the image of $\Phi_\zeta$.
\end{itemize}
Now note that by Lemma \ref{verdierdivisor} for all $\zeta \in \mathbb{Z}_n^*$ the category $\cB_\zeta$ splits as a direct sum of categories labelled by characters in $\mathbb{Z}_{(n-1)!},$ that is: 
$$
\cB_\zeta  \simeq \Perf(D_{(n-1)!}) \simeq \bigoplus_{\xi \in { \mathbb{Z}_{(n-1)!}}} \Perf(D)_\xi 
=: \bigoplus_{\xi \in { \mathbb{Z}_{(n-1)!}}} \cB_{\zeta, \xi}.
$$
We identify the 
subcategories $\cB_{\zeta, \xi}$ with  the factors $\cA_\chi^!$ for $\chi \in \mathbb{Z}_{n!}^*$  appearing in the statement of the proposition by setting 
$$\cA^!_{\frac{\zeta}{(n-1)!} + \xi}:= \cB_{\zeta, \xi}.$$
Note that we can make sense of the expression 
$\frac{\zeta}{(n-1)!} + \xi$ as an element of $\mathbb{Z}_{n!}^*$ because we have identified the sets of characters $\mathbb{Z}_{n!}$, $\mathbb{Z}_{(n-1)!}$ and $\mathbb{Z}_n$ with subsets of $ \mathbb{Q} \cap  (-1,0].$ It is easy to verify that we can write as sums of the form $\frac{\zeta}{(n-1)!} + \xi$ exactly the elements of $\mathbb{Z}_{n!}$ which are in the complement of $\mathbb{Z}_{(n-1)!},$ that is
$$
\left\{ \frac{\zeta}{(n-1)!} + \xi \, \Bigm\vert \, \zeta\in  \mathbb{Z}_n^* , \, \, \xi \in { \mathbb{Z}_{(n-1)!} } \right\} = \mathbb{Z}_{n!} \setminus \mathbb{Z}_{(n-1)!} \subset \mathbb{Q} \cap (-1,0]. 
$$ 
Thus we conclude that the subcategory $Q_n$ has a psod of type $(\mathbb{Z}_{n!} \setminus\mathbb{Z}_{(n-1)!}, \leq^!)$
$$
Q_n = \langle  \cA_\chi^!, \chi \in (\mathbb{Z}_{n!}\setminus\mathbb{Z}_{(n-1)!}, \leq^!) \rangle,
$$
such that $\mathcal{A}_\chi^! \simeq \Perf(D)$ for all $\chi \in \mathbb{Z}_{n!}\setminus\mathbb{Z}_{(n-1)!}.$ 

 By the inductive hypothesis  $\cB_0 \simeq \Perf(\radice[(n-1)!]{(X,D)})$ carries a psod of type $(\mathbb{Z}_{(n-1)!}, \leq^!) $
$$
\cB_0 \simeq \Perf(\radice[(n-1)!]{(X,D)}) = \langle  \cA_\chi^!, \chi \in (\mathbb{Z}_{(n-1)!}, \leq^!)  \rangle  
$$
such that:
\begin{itemize} 
\item  $\mathcal{A}_0^! \simeq \Perf(X)$, and
\item  $\mathcal{A}_\chi^! \simeq \Perf(D)$ for all $\chi \in \mathbb{Z}_{(n-1)!}^*$.
\end{itemize}  
We can thus write 
$$
\Perf(\radice[n!]{(X,D)}) = \langle  Q_n, \cB_0    \rangle =  \langle \langle  \cA_\chi^!, \chi \in (\mathbb{Z}_{n!}\setminus\mathbb{Z}_{(n-1)!}, \leq^!) \rangle  ,  \langle \cA_\chi^!, \chi \in (\mathbb{Z}_{(n-1)!}, \leq^!) \rangle  \rangle. 
$$
Note that the we have a canonical isomorphism of  ordered sets
$$
(\mathbb{Z}_{n!}\setminus\mathbb{Z}_{(n-1)!}, \leq^!) *(\mathbb{Z}_{(n-1)!}, \leq^!)  \cong (\mathbb{Z}_{n!}, \leq^!). 
$$ 
Thus, by Lemma  \ref{lem:preord} the category $
\Perf(\radice[n!]{(X,D)})$ carries a psod of type 
$(\mathbb{Z}_{n!}, \leq^!),$
$$
\Perf(\radice[n!]{(X,D)}) = \langle  \cA_\chi^!, \chi \in (\mathbb{Z}_{n!}, \leq^!) \rangle. 
$$
The compatibility with the psod-s of the pull-back along root maps follows by construction.  This concludes the proof. 
\end{proof}

\begin{example}\label{ex:weird.psod}
For $n=2$, by construction our psod coincides with the one given by Proposition \ref{prop: sod}. For $n=3$ though, the psod given by that proposition for $\radice[3!]{(X,D)}$ looks like
\begin{equation}
\label{sodsodsob}
\Perf(\radice[6]{(X,D)})=\langle \cA_{-\frac{5}{6}}, \cA_{-\frac{4}{6}}, \cA_{-\frac{3}{6}}, \cA_{-\frac{2}{6}}, \cA_{-\frac{1}{6}}, \Perf(X)\rangle
\end{equation}
(where $\cA_{-\frac{k}{n}}$ is the factor $\Phi_{k}$ in Theorem 4.7 of \cite{bergh2016geometricity}).
The psod that was constructed in the previous proposition, on the other hand, has the form
$$
\langle \cB_{-\frac{2}{3}},\cB_{-\frac{1}{3}}, \Perf(\radice[2]{(X,D)}) \rangle,
$$
where $\cB_{-\frac{k}{3}}\simeq \Perf(D_2)\simeq \Perf(D)_{-\frac{1}{2}}\oplus\Perf(D)_0$, and $\Perf(\radice[2]{(X,D)})=\langle \cA^!_{-\frac{1}{2}}, \Perf(X)\rangle$, embedded in $\Perf(\radice[6]{(X,D)})$ via pullback along the projection $\radice[6]{(X,D)}\to \radice[2]{(X,D)}$.

Following the proof of the previous proposition, the first term $\cB_{-\frac{2}{3}}$ gives us $\cA^!_{-\frac{5}{6}}\oplus \cA^!_{-\frac{2}{6}}$, the second term $\cB_{-\frac{1}{3}}$ gives $\cA^!_{-\frac{4}{6}}\oplus \cA^!_{-\frac{1}{6}}$, and $\cA^!_{-\frac{3}{6}}$ is defined as the image of $\cA^!_{-\frac{1}{2}}\subseteq \Perf(\radice[2]{(X,D)})$. Overall, the psod looks like
$$
\langle \cA^!_{-\frac{5}{6}}, \cA^!_{-\frac{2}{6}}, \cA^!_{-\frac{4}{6}}, \cA^!_{-\frac{1}{6}}, \cA^!_{-\frac{3}{6}}, \Perf(X)\rangle
$$
(note that this reflects exactly the ordering $\leq^!$ on $\bZ_{3!}$).

In fact we could swap $\cA^!_{-\frac{2}{6}}$ and $\cA^!_{-\frac{4}{6}}$, and we have $\cA_{-\frac{k}{6}}=\cA^!_{-\frac{k}{6}}$ for all values of $k$ except $3$: the subcategory $\cA^!_{-\frac{3}{6}}$ does not contain the structure sheaf of $D$ equipped with character $-\frac{3}{6}$, but only thickened versions of it.
\end{example}

Set $\mathbb{Q}/\mathbb{Z}^*:=\mathbb{Q}/\mathbb{Z}\setminus\{0\}.$ 
\begin{proposition}\label{sod!Q}
 \mbox{ }
The category $\Perf(\radice[\infty]{(X,D)})$ has a psod of type $(\mathbb{Q}/\mathbb{Z}, \leq^!)$
$$
\Perf(\radice[\infty]{(X,D)}) = \langle  \cA_\chi^!, \chi \in (\mathbb{Q}/\mathbb{Z}, \leq^!)  \rangle,
$$
where:
\begin{itemize} 
\item  $\cA_0^! \simeq \Perf(X)$, and
\item  $\cA_\chi^! \simeq \Perf(D)$ for all $\chi \in \mathbb{Q}/\mathbb{Z}^*$.
\end{itemize}
\end{proposition}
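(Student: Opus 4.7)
The plan is to pass to the limit from the finite-stage result, Proposition \ref{sod!}, along the cofinal subsystem $\{\radice[n!]{(X,D)}\}_{n\in \bN}$. First I would invoke Proposition \ref{prop.db.irs} to identify
$$
\Perf(\radice[\infty]{(X,D)}) \simeq \varinjlim_{n} \Perf(\radice[n!]{(X,D)}),
$$
where the structure maps $g_{(n+1)!,n!}^*$ are fully faithful. Recall also that under our identification of $\bZ_{r}$ with a subset of $\bQ \cap (-1,0]$, we have $\bigcup_{n} \bZ_{n!} = \bQ/\bZ$, and the orders $\leq^!$ on the $\bZ_{n!}$ are compatible with the global order $\leq^!$ on $\bQ/\bZ$.

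Next, for each $\chi \in \bQ/\bZ$, let $n_\chi$ be the smallest integer with $\chi \in \bZ_{n_\chi!}$. Define $\cA_\chi^! \subseteq \Perf(\radice[\infty]{(X,D)})$ to be the image of the admissible subcategory $\cA_\chi^! \subseteq \Perf(\radice[n_\chi!]{(X,D)})$ from Proposition \ref{sod!} under the colimit embedding. By the compatibility statement in Proposition \ref{sod!}, for every $m \geq n_\chi$ the subcategory $\cA_\chi^!$ at stage $n_\chi!$ is mapped into $\cA_\chi^!$ at stage $m!$, so the image is well-defined and is independent of the choice of representing stage.

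I would then verify the three requirements of Definition \ref{def psod}.

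\emph{Semi-orthogonality}. Suppose $\chi <^! \chi'$ and take $A \in \cA_\chi^!$, $B \in \cA_{\chi'}^!$. Choose $n$ large enough so that both $\chi, \chi' \in \bZ_{n!}$ and so that $A$ and $B$ are represented by objects $A_n, B_n$ at stage $n!$. By Proposition \ref{colim}, $\Hom_{\Perf(\radice[\infty]{(X,D)})}(A,B) \simeq \Hom_{\Perf(\radice[n!]{(X,D)})}(A_n,B_n)$, which is contractible by the finite-stage psod of Proposition \ref{sod!}.

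\emph{Generation}. Any object of the colimit is equivalent to the image of some object at a finite stage, which lies in the stable subcategory generated by $\{\cA_\chi^!\}_{\chi \in \bZ_{n!}}$ by Proposition \ref{sod!}. Hence the $\cA_\chi^!$ for $\chi \in \bQ/\bZ$ generate.

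\emph{Admissibility}. This is the main obstacle. For each $\chi$, I want to construct left and right adjoints $l_\chi, r_\chi$ to the inclusion $\cA_\chi^! \hookrightarrow \Perf(\radice[\infty]{(X,D)})$. The idea is to assemble them from the finite-stage adjoints. At each stage $m \geq n_\chi$, the inclusion $\cA_\chi^! \hookrightarrow \Perf(\radice[m!]{(X,D)})$ is admissible by Proposition \ref{sod!}, with adjoints $l_\chi^{(m)}, r_\chi^{(m)}$. Compatibility of the finite-stage psod-s with the pullback $g_{(m+1)!,m!}^*$ implies that these adjoints commute with the pullbacks (here one uses that on the image of $g_{(m+1)!,m!}^*$, the projectors onto the $\cA_\chi^!$ can be read off componentwise). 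Thus $\{l_\chi^{(m)}\}$ and $\{r_\chi^{(m)}\}$ define functors in the colimit. Given $F \in \Perf(\radice[\infty]{(X,D)})$ represented by $F_m$ at stage $m$, I would define $r_\chi(F)$ as the image of $r_\chi^{(m)}(F_m)$ in the colimit, and similarly for $l_\chi$; the compatibility ensures this is independent of the choice of representative. The adjunction identities then follow from Proposition \ref{colim}, since mapping spaces in the colimit are computed stagewise. This verifies admissibility and completes the proof.
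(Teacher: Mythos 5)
Your proposal is correct and follows essentially the same route as the paper's proof: identify $\Perf(\radice[\infty]{(X,D)})$ as the filtered colimit along the fully faithful pullbacks $g_{(n+1)!,n!}^*$ via Proposition \ref{prop.db.irs}, use the compatibility of the psod-s from Proposition \ref{sod!}, and pass to the limit using Proposition \ref{colim}. The only difference is that the paper simply asserts that the resulting collection of subcategories satisfies the psod axioms, whereas you spell out semi-orthogonality, generation, and in particular admissibility (assembling the adjoints $l_\chi, r_\chi$ from the finite-stage adjoints by exploiting that $g^*(\cA_\chi^{!,(m)}) = \cA_\chi^{!,(m+1)}$ and full faithfulness of $g^*$); this is a legitimate and welcome filling-in of a step the paper leaves to the reader.
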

\begin{proof}
Factorials are cofinal in the filtered set of natural numbers ordered by divisibility. This together with Proposition \ref{prop.db.irs}  implies that 
$\Perf(\radice[\infty]{(X,D)})$ is the colimit of the directed system of fully-faithful embeddings 
\begin{equation}
\label{directed!}
\Perf(X) \xrightarrow{g_{2!,1!}^*} \Perf(\radice[2!]{(X,D)}) 
\xrightarrow{g_{3!,2!}^*} \Perf(\radice[3!]{(X,D)}) 
\xrightarrow{g_{4!,3!}^*} \cdots
\end{equation}
By Proposition \ref{sod!} 
the $n$-th category in the directed system (\ref{directed!}) carries a psod of type $(\mathbb{Z}_{n!}, \leq^!).$ Further the structure functors are compatible with the  sod-s: the compatibility is witnessed by the inclusion of preordered sets 
$$
(\mathbb{Z}_{(n-1)!}, \leq^!) \subset (\mathbb{Z}_{n!}, \leq^!),
$$ 
which embeds the indexing set of the sod of $\Perf(\radice[(n-1)!]{(X,D)})$ into the indexing set of the sod of $\Perf(\radice[n!]{(X,D)})$. 

By Proposition \ref{colim}, $\Perf(\radice[\infty]{(X,D)})$ is given by the union of the  categories making up the directed system (\ref{directed!}).   
This implies that $\Perf(\radice[\infty]{(X,D)})$ carries a psod of type
$$
\bigcup_{n \in \mathbb{N}} (\mathbb{Z}_{n
!}, \leq^!) = (\mathbb{Q}/\mathbb{Z}, \leq^!).
$$
It is immediate to see that this sod satisfies the  properties $(1)$ and $(2)$ from the statement. This concludes the proof. 
\end{proof}

     \subsection{Root stacks of simple normal crossings divisors} 
     \label{sncd}  

As shown in \cite{bergh2016geometricity}, the  categories of perfect complexes over the  root stacks of simple normal crossing divisors carry a canonical  
sod. We 
review that result 
in a slightly different formulation, 
which is better adapted to our purposes. 
Further, we extend it to the 
infinite root stack. 
We start by fixing  notations. 

Let $X$ be an algebraic stack and $D$ be a simple normal crossings divisor on $X$. We denote by $D_1,\hdots, D_N$ its irreducible components. As recalled in the preliminaries, this gives rise to a log stack $(X,D)$. 
Denote by $I$ the set $\{1, \ldots, N\}$.
The  stack $X$ carries a canonical stratification, given by the closed substacks of  $D$  obtained as intersections of the $D_i$. 
If $J$ is a subset of $I$, we denote  
$D_J := \cap_{j \in J} D_j$ if $J \neq \varnothing,$ and we set $D_\varnothing := X$ otherwise. Let 
$ \, \, 
S_I := \{J \subseteq I \}
$
be the power set of $I$, and $S_I^*$ denote the subset $S_I\setminus\{\varnothing\}$.  We often equivalently regard $I$ as the set of irreducible components of $D$, $S_I$ as the set of strata of $(X,D)$, and $S_I^*$ as the set of the strata of positive codimension.  We equip 
$S_I$ with a preorder $\leq$ that keeps track of the inclusions of strata, but it is finer than that. 
Namely we let  $\leq$ be the coarsest preorder on 
$S_I$ with the following two properties: 
\begin{itemize}
\item if $J \subseteq J',$ then $J' \leq J$, and
\item if $d$ is the dimension of $X$, then  the assignment $(S_I, \leq) \to (\mathbb{N}, \leq)$ mapping  
a subset $J$ to  $d \, -|J|$  is an order-reflecting map. 
\end{itemize} 
By ``order-reflecting map'' we mean a map between preordered sets $f:P \to Q$ such that $f(p) \le f(p')$ implies $p \le p'$. Note in particular that these conditions impose both $J\leq J'$ and $J'\leq J$ if $| J|=| J'|$.

Recall that the root stacks of $(X,D)$ are indexed by multi-indices 
$ \,  
\vec{r} = (r_1, \ldots, r_N) \in \mathbb{N}^N.  
\, $   
For elements $\vec{r},\vec{r'}$ of $\mathbb{N}^N$ we write 
$
\vec{r}=(r_1, \ldots, r_N)\; \mid \;(r_1', \ldots, r_N')=\vec{r'} $ if 
$  r_1 \mid r_1', \ldots, r_N \mid r_N'. 
$
Recall also that the root stack $\radice[\vec{r}]{(X,D)}$ in this case can be realized as the limit of the diagram of stacks
$$
\xymatrix{
\radice[r_1]{(X,D_1)} \ar[rrd] & \radice[r_2]{(X,D_2)} 
\ar[rd] & \ldots & \radice[r_{N-1}]{(X,D_{N-1})} \ar[ld] & \radice[r_N]{(X,D_N)} 
\ar[lld] \\
&& X . &&
}
$$
If $\vec{r}, \vec{r'}\in \mathbb{N}^N,$ $\vec{r} \mid \vec{r'},$ 
we denote the natural maps between root stacks  
by
$$
g_{\vec{r'},\vec{r}}: \radice[\vec{r'}]{(X,D)} \longrightarrow \radice[\vec{r}]{(X,D)}.$$

\subsubsection{Root stacks and the strata of $(X,D)$}
 Let $\vec{r} \in \mathbb{N}^N$. We will use the following notation. 
\begin{itemize}
\item 
Let $J = \{j_1, \ldots, j_k \}\subseteq I$ be non-empty. We denote by $\vec{r}_J \in \mathbb{N}^N$ the index vector obtained from $\vec{r}$ by setting to $1$  all the entries whose index is not in $J$. In formulas, letting $\vec{e}_j$ be the size $N$ vector with $j$-th entry $1$ and all other entries equal to $0$, we can write 
$$
\vec{r}_J=\sum_{j \in J} r_{j} \vec{e_{j}} + \sum_{j \notin J}   \vec{e_{j}}.
$$
Note that $\vec{r}_J \mid \vec{r}$. 
\item  With slight abuse of notation, for all $i \in I$, we denote by $D_{i, r_i}$ both the universal effective Cartier divisor on the stack $\radice[r_{i}]{(X,D_{i})}$ (obtained as reduction of the preimage of $D_i$), and its pull-back to $\radice[\vec{r}]{(X,D)}$. Whenever we use this notation  we will specify which of the two meanings is the intended one. 
\end{itemize} 
We denote by
$D_{J,\vec{r}}$ the limit of the diagram of stacks 
\begin{equation}
\begin{gathered}
\label{defDJr}
\xymatrix{
D_{j_1,r_{j_1}} \ar[rrd] & D_{j_2,r_{j_2}} 
\ar[rd] & \ldots & D_{j_{k-1},r_{j_{k-1}}} \ar[ld] & D_{j_k ,r_{j_k}}\ar[lld] \\
&& \radice[\vec{r}]{(X,D)}&&
}
\end{gathered}
\end{equation}
 where the arrows are the embeddings $D_{j_l, r_{j_l}} \subset \radice[\vec{r}]{(X,D)}$. In general $D_{J,\vec{r}}$ is not a gerbe over $D_J$, as $D_{J,\vec{r}}$ will have larger isotropy groups along the higher codimensional strata $S \in S_I$ contained in $D_J$. However $D_{J,\vec{r}_J}$ is always a  gerbe  over $D_J$.    
We denote by $i_{J,\vec{r}}: D_{J,\vec{r}} \to \radice[\vec{r}]{(X,D)}$ the embedding.

We set 
$ \, \, 
|J, \vec{r}|:=\prod_{j \in J} r_j ,
\, $ and 
$$
\mu_{J,\vec{r}} := \bigoplus_{j \in J} \mu_{r_j}, \quad \mathbb{Z}_{J,\vec{r}} := \bigoplus_{j \in J} \mathbb{Z}_{r_j}, \quad 
\mathbb{Z}_{J,\vec{r}}^* := \bigoplus_{j \in J} \Big ( \mathbb{Z}_{r_j}\setminus \{0\} \Big ), $$
$$
(\mathbb{Q}/\mathbb{Z})_{J} := \bigoplus_{j \in J} \mathbb{Q}/\mathbb{Z}, \quad 
(\mathbb{Q}/\mathbb{Z})_{J}^*:=   \bigoplus_{j \in J} \Big ( \mathbb{Q}/\mathbb{Z}\setminus \{0\} \Big ).  
$$
Note that $\mathbb{Z}_{J,\vec{r}}$ 
is the Cartier dual of 
$\mu_{J,\vec{r}}$. 
\begin{remark}
\label{remremnotation}
By definition, the set of strata of a pair $(X, D)$, where $D$ is simple normal crossing, is the disjoint union of the intersections of the irreducible components of $D$. Thus strata are in bijection with the subsets of $I$. However it is sometime convenient to label the index sets we have introduced above via the strata themselves, without making an explicit reference to the corresponding subsets $J \subset I$. Thus if $Z$ is a stratum of $(X, D)$, then $Z= \cap_{j \in J} D_j$ for some $J \subset I$, and we will sometime denote by 
$$
\mu_{Z,\vec{r}}, \quad \mathbb{Z}_{Z,\vec{r}}, \quad 
\mathbb{Z}_{Z,\vec{r}}^* ,  \quad (\mathbb{Q}/\mathbb{Z})_{Z}, \quad 
(\mathbb{Q}/\mathbb{Z})_{Z}^*
$$
the index sets $
\mu_{J,\vec{r}}$ , $\mathbb{Z}_{J,\vec{r}}$ ,  
$\mathbb{Z}_{J,\vec{r}}^*$ ,  $(\mathbb{Q}/\mathbb{Z})_{J}$ , and $
(\mathbb{Q}/\mathbb{Z})_{J}^*$. 
\end{remark}

\begin{lemma}\label{lemma: E}
Let $J \subseteq I$ be a non-empty subset. Then the category $
\Perf(D_{J,\vec{r}_J})$ splits as the direct sum of $|J, \vec{r}|$ copies of 
$\Perf(D_J).$ More precisely  
there are natural equivalences  
$$
\Perf(D_{J,\vec{r}_J}) \simeq  \bigoplus_{\chi \in  \mathbb{Z}_{J,\vec r}} \Perf(D_{J,\vec r})_\chi  \simeq  \Perf(D_J) \otimes \Perf(\mathscr{B}\mu_{J,\vec r}) \simeq \Perf(D_J) \otimes \bigoplus_{\chi \in \mathbb{Z}_{J,\vec r}} \Perf(\kappa).
$$
\end{lemma}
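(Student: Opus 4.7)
The plan is to recognize $D_{J,\vec{r}_J}\to D_J$ as a $\mu_{J,\vec{r}}$-gerbe and then apply the character decomposition for gerbes banded by a diagonalizable group scheme, exactly as in Lemma \ref{verdierdivisor}. The present lemma is essentially the $|J|$-fold iteration of that one, enabled by the transversality of the components $D_j$ for $j\in J$ along the intersection $D_J$.

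To identify $D_{J,\vec{r}_J}\to D_J$ as a $\mu_{J,\vec{r}}$-gerbe, I would work locally: over an affine open $X=\Spec A$ where each $D_j$ has a global equation $f_j$, the explicit presentation
$$
\radice[\vec{r}_J]{(X,D)} \simeq \bigl[\Spec A[x_j : j\in J]/(x_j^{r_j}-f_j)_{j\in J}\ \big/\ \mu_{J,\vec{r}}\bigr]
$$
has each universal divisor $D_{j,r_j}$ cut out by $x_j=0$. Imposing $x_j=0$ simultaneously for all $j\in J$ yields $D_{J,\vec{r}_J}\simeq [D_J/\mu_{J,\vec{r}}]$ with trivial $\mu_{J,\vec{r}}$-action on $D_J$. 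These local descriptions glue into a global $\mu_{J,\vec{r}}$-gerbe structure on $D_{J,\vec{r}_J}$ over $D_J$; equivalently, one may obtain this by iterating Lemma \ref{verdierdivisor} one component at a time, using that transverse rooting operations commute.

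The three equivalences of the statement then follow from the standard character decomposition for gerbes banded by a diagonalizable group scheme. The decomposition of $\Perf(D_{J,\vec{r}_J})$ into $\mathbb{Z}_{J,\vec{r}}$-isotypic components gives the first equivalence, and each character component is (non-canonically) equivalent to $\Perf(D_J)$ because the Brauer obstruction vanishes: the gerbe class lies in the image of $\Pic(D_J)^{|J|}$ under the Kummer boundary, whose composition with $H^2(D_J,\mu_{J,\vec{r}})\to H^2(D_J,\mathbb{G}_m)$ is zero by the Kummer exact sequence. Combining with the standard identification $\Perf(\mathscr{B}\mu_{J,\vec{r}})\simeq \bigoplus_{\chi}\Perf(\kappa)$ for diagonalizable groups gives the remaining two equivalences. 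No serious obstacle is anticipated, since the main conceptual input was already invoked in Lemma \ref{verdierdivisor}, and all additional work amounts to a routine local verification of the gerbe structure.
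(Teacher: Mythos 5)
Your proof is correct and follows essentially the same route as the paper, which simply asserts that the argument is analogous to Lemma \ref{verdierdivisor} (the $\mu_r$-gerbe character decomposition). Your explicit local presentation identifying $D_{J,\vec{r}_J}\simeq [D_J/\mu_{J,\vec{r}}]$ with trivial action, and your remark that the gerbe class lies in the image of the Kummer boundary from $\Pic$ (so each isotypic piece is equivalent to $\Perf(D_J)$ via the universal weight-one twisted line bundles), usefully fill in details the paper leaves implicit.
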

\begin{proof}
The proof is similar to the proof of Lemma \ref{verdierdivisor}. 
\end{proof}

\subsubsection{The main theorem}
Similarly to what we did in Section \ref{rooteff} for every $J \subset I$ we identify  $\mathbb{Z}_{J,\vec r},$ $\mathbb{Z}_{J,\vec r}^*$ with 
subsets of $(\mathbb{Q}/\mathbb{Z})_I  = \mathbb{Q}^{I }  \cap (-1,0] ^{ I  }$ via 
$$
\mathbb{Z}_{J,\vec r}^* \subset \mathbb{Z}_{J,\vec{r}} = \bigoplus_{j \in J}   \mathbb{Z}_{r_j}  \subset  \bigoplus_{i \in I}   \mathbb{Z}_{r_i} = \mathbb{Z}_{I,\vec r}  \subset  \bigoplus_{i \in I} \mathbb{Q}/\mathbb{Z}  = 
\mathbb{Q}^{I}  \cap (-1,0] ^{  I  },
$$
where, on each factor, the inclusion of sets $\mathbb{Z}_{r_i} \subset \mathbb{Q}/\mathbb{Z} = \mathbb{Q} \cap (-1,0]$ is the one we considered in Section \ref{rooteff}. We also consider the inclusions 
$$
(\mathbb{Q}/\mathbb{Z})_J^* \subset (\mathbb{Q}/\mathbb{Z})_J \subset  \bigoplus_{j \in J} \mathbb{Q}/\mathbb{Z}   \subset  \bigoplus_{i \in I} \mathbb{Q}/\mathbb{Z}  = 
\mathbb{Q}^{I}  \cap (-1,0] ^{  I  }.$$
The following lemma is immediate. 
\begin{lemma}
\label{lemimm}
We have decompositions as disjoint union  of sets
$$
\mathbb{Z}_{I,\vec r} = \coprod_{J \subset I} 
\mathbb{Z}_{J,\vec r}^*   \quad 
 (\mathbb{Q}/\mathbb{Z})_I = \coprod_{J \subset I} (\mathbb{Q}/\mathbb{Z})_J^*. \vspace{-27.5pt}
$$
\qed\vspace{15pt}
\end{lemma}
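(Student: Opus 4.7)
The plan is to exhibit an explicit bijection by assigning to every element its \emph{support}. First I would define, for an element $\chi = (\chi_i)_{i \in I} \in \mathbb{Z}_{I,\vec r} = \bigoplus_{i \in I} \mathbb{Z}_{r_i}$, the subset
$$
J(\chi) := \{ i \in I \mid \chi_i \neq 0 \} \subseteq I.
$$
The key observation is that, under the identifications fixed just above the lemma, $\mathbb{Z}_{J,\vec r}^* \subset \mathbb{Z}_{I,\vec r}$ is precisely the subset of elements whose $i$-th component vanishes for $i \notin J$ and is non-zero for $i \in J$. Hence $\chi \in \mathbb{Z}_{J(\chi),\vec r}^*$, and $J(\chi)$ is manifestly the unique such $J$.

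Next I would package this into the required disjointness and covering statements. Disjointness: if $J \neq J'$, then an element in $\mathbb{Z}_{J,\vec r}^* \cap \mathbb{Z}_{J',\vec r}^*$ would simultaneously have support exactly $J$ and exactly $J'$, which is impossible. Covering: for any $\chi \in \mathbb{Z}_{I,\vec r}$ we have $\chi \in \mathbb{Z}_{J(\chi),\vec r}^*$, handling also the edge case $\chi = 0$ for which $J(\chi) = \varnothing$ and $\mathbb{Z}_{\varnothing,\vec r}^*$ is the empty direct sum $\{0\}$.

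Finally I would note that the second identity follows by exactly the same argument, replacing each factor $\mathbb{Z}_{r_i}$ with $\mathbb{Q}/\mathbb{Z}$ throughout and using the identical support-map $\chi \mapsto J(\chi)$. There is essentially no obstacle here: the statement is a set-theoretic bookkeeping lemma, and the only mild subtlety is making sure the convention for $J = \varnothing$ is handled so that $0$ is accounted for exactly once on the right-hand side.
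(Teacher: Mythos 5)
Your proof is correct, and it spells out precisely the bookkeeping that the paper declares "immediate" (the lemma carries only a \qed with no written argument). The support map $\chi \mapsto J(\chi)$ is exactly the intended device, and you have handled the $J = \varnothing$ convention correctly so that $0$ appears exactly once.
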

By Lemma \ref{lemimm} if $\chi$ is in $\mathbb{Z}_{I,\vec r}$   there is a unique $J \subset I$ such that 
$\chi$ is in $\mathbb{Z}_{J,\vec r}^*,$ and similarly for 
 $(\mathbb{Q}/\mathbb{Z})_I$  and $(\mathbb{Q}/\mathbb{Z})_J^*$.
 
 \begin{definition}
 \label{defpreorder<}
We  equip $(\mathbb{Q}/\mathbb{Z})_I= \mathbb{Q}^ I \cap (-1,0]^ I$ with the product partial order $\leq$: let  
$$
\chi=(\chi_1, \ldots, \chi_N) \quad \text{and} \quad 
\chi'=(\chi'_1, \ldots, \chi'_N) \quad \text{be in} \quad \mathbb{Q}^ I \cap (-1,0]^ I 
$$ 
then we set $\chi \leq \chi'$ if  
$\chi_{l} \leq \chi'_{l}$ for all  $l=1, \ldots, N$ for the restriction of the standard ordering of the real numbers. 
This restricts to a preorder on 
 $ \mathbb{Z}_{J,\vec{r}}, \,  \mathbb{Z}_{J,\vec{r}}^*, \, (\mathbb{Q}/\mathbb{Z})_J$ and $(\mathbb{Q}/\mathbb{Z})_J^*$. 
 \end{definition}
 
  We introduce the following notations: 
\begin{itemize}
\item Let $\vec{0} = (0, \ldots, 0) \in \mathbb{Z}_{I, \vec{r}}.$  We denote   by $\cA_{\vec{0}} \subset \Perf(\radice[r]{(X,D)})$ the image of $\Perf(X)$  under $g_{\vec{r},1}^*.$ 
\item Let $J \in S_I^*$ and let $\chi$ be in $\mathbb{Z}_{J,\vec{r}}^*.$ We denote by 
$\mathcal{A}_{\chi}^J \subset \Perf(\radice[\vec{r}]{(X,D)})$ the image of $\Perf(D_J)$ under the functor 
$\Phi_{J,\chi}$, which is defined as the composite
$$
\Phi_{J,\chi}: \Perf(D_J) \stackrel{(a)} \simeq \Perf(D_{J,\vec r})_\chi \stackrel{(b)} \rightarrow   \Perf(D_{J,\vec{r}_J})  \xrightarrow{ {(i_{J,\vec{r}_J})_*}} 
 \Perf(\radice[\vec{r}_J]{(X,D)}) \xrightarrow{ (g_{\vec{r}, \vec{r}_J})^ *} 
  \Perf(\radice[\vec{r}]{(X,D)}) 
 $$
 where equivalence $(a)$ and   inclusion $(b)$ are explained in Lemma \ref{lemma: E}.  
\item We denote by $\cA_{J}$ the subcategory of $ \Perf(\radice[\vec{r}]{(X,D)})$  generated by the subcategories $\mathcal{A}_{\chi}^J$ for $\chi \in \mathbb{Z}_{J,\vec{r}}^*$. \end{itemize}
 The following statement is a rephrasing of \cite[Theorem 4.9]{bergh2016geometricity}. 
 \begin{proposition}
\label{prop: sodvsimplnc}
 \mbox{ }
\begin{enumerate}[leftmargin=*] 
\item The category $\Perf(\radice[\vec r]{(X,D)})$ has a  psod
 of type $S_I$,  
$ \, 
\Perf(\radice[\vec r]{(X,D)}) = \langle \cA_J,  J \in S_I  \rangle 
\, ,$ such that: 
\begin{itemize}[leftmargin=*]
\item For all $J$, the subcategory $\cA_J$ is admissible. 
\item  For all $J \in S_I^*$, the subcategory $\cA_J$  has a psod of type $(\mathbb{Z}_{J,\vec r}^*, \leq)$ 
$$
\cA_J = \langle  \cA_{ \chi}^J, \chi \in (\mathbb{Z}_{J,\vec r}^*, \leq)  \rangle
$$ 
and, for all $\chi \in \mathbb{Z}_{J,\vec r}^*$, there is an equivalence $\cA_{ \chi}^J \simeq \Perf(D_J)$. 
\end{itemize}
\item The  category $\Perf(\radice[\vec r]{(X,D)})$ has a psod of type $(\mathbb{Z}_{I, \vec{r}}, \leq)$ 
$$\Perf(\radice[\vec r]{(X,D)})=  \langle  \cA_\chi^J, \chi \in  (\mathbb{Z}_{I, \vec{r}}, \leq)   \rangle,$$
where $\cA_{\vec 0}\simeq \Perf(X)$ and for all $\chi \in \mathbb{Z}_{J,\vec r}^*$ there is an equivalence $\cA_{ \chi}^J \simeq \Perf(D_J).$
\end{enumerate}
\end{proposition}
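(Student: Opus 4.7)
The plan is to deduce the proposition as a reorganization by stratum of the sod provided by Theorem 4.9 in \cite{bergh2016geometricity}. I would proceed by induction on $N = |I|$, using Proposition \ref{prop: sod} (the single component case) as the base case together with the iterated construction of root stacks.

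The base case $N=1$ is essentially Proposition \ref{prop: sod}, after identifying $S_I = \{\varnothing, \{1\}\}$ and $\mathbb{Z}_{I, \vec{r}} = \mathbb{Z}_{r_1}$. For the inductive step, fix one index $N \in I$ and use the factorization
$$
\radice[\vec{r}]{(X,D)} \simeq \radice[r_N]{\Bigl(\radice[\vec{r}_{I \setminus \{N\}}]{(X,D)}, \widetilde{D}_N\Bigr)}
$$
where $\widetilde{D}_N$ is the pullback of $D_N$, still a regular Cartier divisor on the intermediate stack. Applying Proposition \ref{prop: sod} produces a psod whose summands are $\cA'_0 \simeq \Perf(\radice[\vec{r}_{I \setminus \{N\}}]{(X,D)})$ and $\cA'_\zeta \simeq \Perf(\widetilde{D}_N)$ for $\zeta \in \mathbb{Z}_{r_N}^*$. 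The inductive hypothesis applies to $\cA'_0$ (since the snc divisor now has $N-1$ components) and to each $\cA'_\zeta$ (since $\widetilde{D}_N$ is itself a root stack of the snc pair $(D_N, \bigcup_{i \ne N} D_i \cap D_N)$ on $D_N$, which has $N-1$ components). Iteratively applying Lemma \ref{lem:preord} then assembles a single fine psod of $\Perf(\radice[\vec{r}]{(X,D)})$ indexed by all characters $\chi \in \mathbb{Z}_{I, \vec{r}}$, with each summand equivalent to $\Perf(D_{J(\chi)})$, where $J(\chi) := \{i \in I : \chi_i \ne 0\}$ is the support of $\chi$.

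To extract the stratum-indexed psod, one sets $\cA_J := \langle \cA_\chi : J(\chi) = J \rangle$ and identifies each $\cA_J$ with the subcategory generated by the images of the functors $\Phi_{J, \chi}$ for $\chi \in \mathbb{Z}_{J, \vec{r}}^*$. This identification uses base change along the relative root stack projection $g_{\vec{r}, \vec{r}_J}$ combined with the character splitting of Lemma \ref{lemma: E} applied to the gerbe $D_{J, \vec{r}_J} \to D_J$.

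I expect the main obstacle to be verifying the correct semi-orthogonality pattern for the coarsened decomposition. Concretely, one needs: (i) $\cA_{J'} \subseteq \cA_J^\perp$ whenever $J \subsetneq J'$ (which corresponds to $J' \leq J$ in the preorder), emerging directly from the iterative construction since the deeper-stratum factors are produced later in the iteration; and (ii) when $|J| = |J'|$ with $J \ne J'$, mutual orthogonality of $\cA_J$ and $\cA_{J'}$, which reduces to a support argument: objects in $\cA_J$ are pushforwards from $D_{J, \vec{r}_J}$, those in $\cA_{J'}$ from $D_{J', \vec{r}_{J'}}$, and $\Hom$-spaces between them compute cohomology on the deeper intersection $D_{J \cup J'}$ twisted by characters that do not match, yielding vanishing. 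Together with the fact that the preorder on $S_I$ described in the statement is the coarsest one compatible with (i) and (ii), this confirms that the stated collection of subcategories forms a psod of type $(S_I, \leq)$.
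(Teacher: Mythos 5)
The paper supplies no proof of its own here: it states that the proposition is a rephrasing of \cite[Theorem 4.9]{bergh2016geometricity} and simply cites that result. Your proposal instead gives a self-contained inductive argument (on the number of components, via Proposition \ref{prop: sod} and the iterated root stack description), which is in the spirit of Bergh--Lunts--Schn\"urer's own proof and also parallels what the paper does later for Proposition \ref{sod!snc}. That is a legitimate alternative to the bare citation.

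However, the claim that item (i) ``emerges directly from the iterative construction'' is not accurate for all required pairs. Iterating Lemma \ref{lem:preord} produces a preorder keyed lexicographically: first by the outer character $\zeta = \chi_N \in \bZ_{r_N}$, then recursively. This order interleaves strata of different codimension across the $\zeta$-blocks and does \emph{not} refine the preorder $(S_I, \leq)$. Concretely, suppose $J \subsetneq J'$, both contain the chosen index $N$, and $\chi_N < \chi'_N$ in $\bZ_{r_N}$; then $\cA_\chi^J$ sits in an \emph{earlier} $\zeta$-block than $\cA_{\chi'}^{J'}$, so the iterated join only gives the vanishing of $\Hom(\cA_{\chi'}^{J'}, \cA_\chi^J)$ --- the wrong direction, since $(S_I, \leq)$ requires $\cA_{J'} \subseteq \cA_J^\perp$, i.e.\ the vanishing of $\Hom(\cA_\chi^J, \cA_{\chi'}^{J'})$. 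The fix is that the character-and-support argument you sketch under (ii) in fact covers all these cases as well: for any $J \neq J'$ choose $j$ with $\chi'_j \neq 0 = \chi_j$ (or symmetrically); the local $\mathcal{H}om$-complex is supported on $D_{J \cup J', \vec r} \subseteq D_{j, r_j}$, and the residual $\mu_{r_j}$-weight contributed by the Koszul resolution of the pushed-forward sheaf is the nonzero $\chi'_j$, so taking invariants kills the relevant Ext groups. You should present this character vanishing as the single engine behind the entire orthogonality pattern of $(S_I, \leq)$, rather than treating (i) as automatic.
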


We will equip the set 
$(\mathbb{Q}/\mathbb{Z})_I = \mathbb{Q}^ I \cap (-1,0]^ I$ with a  total order $\leq^ !$ (different from the one of  Definition \ref{defpreorder<}) which generalizes the preorder 
$(\mathbb{Q}/\mathbb{Z}, \leq^ !)$ that we introduced in Section \ref{rooteff}. 
We can write every element $\chi$ in $\mathbb{Q}^ I \cap (-1,0]^ I$ as 
$$
\chi = \left(-\frac{p_1}{n!}, \ldots, -\frac{p_N}{n!}\right)
$$
 for some $p_1, \ldots, p_N$ in $\mathbb{N}$ and $n \in \mathbb{N}.$ This expression is unique if we require $n$ to be as small as possible, and we call this the
\emph{normal factorial form}.  
\begin{definition}
\label{defpartord!}
Let 
$$
\chi = \left(-\frac{p_1}{n!}, \ldots, -\frac{p_N}{n!}\right), \quad \chi'= \left(-\frac{q_1}{m!}, \ldots, -\frac{q_N}{m!}\right) \in \mathbb{Q}^ I \cap (-1,0]^ I
$$ be in normal factorial form. 
We write $\chi \leq^! \chi'$ if:
\begin{itemize}
\item $n > m$, or
\item $n=m$ and $-\frac{p_i}{n!} \leq^! -\frac{q_i}{n!}$ in $\bZ_{n!}$ for all $i = 1, \hdots, N$, where $\leq^!$ is the ordering defined in Section \ref{rooteff}. 
\end{itemize}
For all $\vec r \in \mathbb{N}^N$ we obtain an induced ordering $\leq^!$ on $\mathbb{Z}_{I, \vec{r}}$ and $\mathbb{Z}_{I, \vec{r}}^*$.
\end{definition}
If $n \in \mathbb{N}$ we set $\vec{n}:=(n, \ldots, n)$ and  $\vec{n}!:=(n!, \ldots, n!)\in \bN^N$.

\begin{proposition}
\label{sod!snc}
 \mbox{ }
\begin{enumerate}[leftmargin=*] 
\item The category $\Perf(\radice[\vec{n}!]{(X,D)})$ has a collection of subcategories $\cA_J^!$, $J \in S_I$, such that: 
\begin{itemize}[leftmargin=*]
\item  For all $J$, the subcategory $\cA_J^!$ is admissible. 
\item  For all $J \in S_I^*$, the subcategory $\cA_J^!$  has a psod of type $(\mathbb{Z}_{J,\vec{n}!}^*, \leq^!)$ 
$$
\cA_J^! = \langle  \cA_{ \chi}^{J,!}, \chi \in (\mathbb{Z}_{J,\vec{n}!}^*, \leq^!) \rangle
$$
and, for all $\chi \in \mathbb{Z}_{J,\vec{n}!}^*$, there is an equivalence $\cA_{ \chi}^{J,!} \simeq \Perf(D_J).$
\end{itemize}
\item  The  category  $\Perf(\radice[\vec{n}!]{(X,D)})$ has a psod of type $(\mathbb{Z}_{I, \vec{n}!}, \leq^!)$ 
$$\Perf(\radice[\vec n !]{(X,D)})=  \langle  \cA_\chi^{J,!}, \chi \in  (\mathbb{Z}_{I, \vec{n}!}, \leq^!)   \rangle,$$
where $\cA_{\vec 0}^!\simeq \Perf(X)$ and for all $\chi \in \mathbb{Z}_{J,\vec n!}^*$ there is an equivalence $\cA_{ \chi}^{J,!} \simeq \Perf(D_J).$
\item For all $n \in \mathbb{N}$, $ \, g_{\overrightarrow{(n+1)}!, \overrightarrow{n}!}^*\colon \Perf(\radice[\vec n !]{(X,D)}) \longrightarrow \Perf(\radice[\overrightarrow{(n+1)}!]{(X,D)})
\, $
is compatible with the  psod-s. 
\end{enumerate}
\end{proposition}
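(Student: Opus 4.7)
My plan is to prove items (1) and (2) simultaneously by induction on $n\in \bN$ while constructing the subcategories $\cA_J^!$; item (3) will then be immediate from the construction. The engine of the induction is the identification
\[
\radice[\vec{n}!]{(X,D)} \simeq \radice[\vec{n}]{(\radice[\overrightarrow{(n-1)}!]{(X,D)},\widetilde{D})},
\]
where $\widetilde{D}$ is the simple normal crossings divisor on $\radice[\overrightarrow{(n-1)}!]{(X,D)}$ whose components are the universal Cartier divisors $D_{i,(n-1)!}$. Applying Proposition \ref{prop: sodvsimplnc} to the right-hand side will produce a psod of $\Perf(\radice[\vec{n}!]{(X,D)})$ indexed by $(\mathbb{Z}_{I,\vec{n}},\leq)$; each non-trivial summand will then be refined using Lemma \ref{lemma: E}, and the trivial-character summand, which is equivalent to $\Perf(\radice[\overrightarrow{(n-1)}!]{(X,D)})$, will be refined by invoking the inductive hypothesis.

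The base case $n=1$ is tautological since $\radice[\vec{1}!]{(X,D)}=X$, and for $n=2$ one applies Proposition \ref{prop: sodvsimplnc} directly to $\radice[\vec{2}!]{(X,D)}=\radice[\vec{2}]{(X,D)}$; on $\mathbb{Z}_{I,\vec{2}}$ the preorders $\leq$ and $\leq^!$ agree, so one takes $\cA_\chi^{J,!}:=\cA_\chi^J$ with the properties guaranteed by that proposition. For the inductive step, apply Proposition \ref{prop: sodvsimplnc} to the iterated root stack displayed above. For each non-empty $J\subseteq I$ and each $\chi\in \mathbb{Z}_{J,\vec n}^*$ the corresponding summand is equivalent to $\Perf(\widetilde{D}_J)$, and since $\widetilde{D}_J=\bigcap_{j\in J} D_{j,(n-1)!}$ is a $\mu_{J,\overrightarrow{(n-1)}!}$-gerbe over $D_J$, Lemma \ref{lemma: E} splits it orthogonally into $|J,\overrightarrow{(n-1)}!|$ copies of $\Perf(D_J)$ indexed by characters in $\mathbb{Z}_{J,\overrightarrow{(n-1)}!}$. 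In exact analogy with the one-component proof of Proposition \ref{sod!}, identify a pair $(\chi,\xi)\in \mathbb{Z}_{J,\vec n}^*\times \mathbb{Z}_{J,\overrightarrow{(n-1)}!}$ with $\chi/(n-1)!+\xi\in \mathbb{Z}_{J,\vec n!}\setminus \mathbb{Z}_{J,\overrightarrow{(n-1)}!}$; this furnishes the desired $\cA_\eta^{J,!}\simeq \Perf(D_J)$ for each such $\eta$. On the trivial-character summand, which is the image of $g^*_{\vec{n}!,\overrightarrow{(n-1)}!}$, invoke the inductive hypothesis to get a psod of type $(\mathbb{Z}_{I,\overrightarrow{(n-1)}!},\leq^!)$; gluing the pieces via Lemma \ref{lem:preord} will then yield the required psod of $\Perf(\radice[\vec{n}!]{(X,D)})$.

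The main obstacle is not categorical but combinatorial: one must verify that the join preorder
\[
(\mathbb{Z}_{I,\vec{n}!}\setminus \mathbb{Z}_{I,\overrightarrow{(n-1)}!},\leq^!) \ast (\mathbb{Z}_{I,\overrightarrow{(n-1)}!},\leq^!)
\]
recovers the preorder $\leq^!$ of Definition \ref{defpartord!} on all of $\mathbb{Z}_{I,\vec{n}!}$. This will be handled by unwinding the normal factorial form componentwise: an index $\eta\in \mathbb{Z}_{I,\vec{n}!}$ requires $n$ in its normal factorial form precisely when $\eta\notin \mathbb{Z}_{I,\overrightarrow{(n-1)}!}$, so the join correctly places every ``new'' index below every ``old'' one, and within each class the componentwise match with the ordering of Proposition \ref{sod!} is immediate. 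Finally, item (3) will follow by construction: at each step the psod of the lower root stack embeds as a family of summands of the psod of the higher one via the fully faithful functor $g^*_{\overrightarrow{(n+1)}!,\vec{n}!}$, and the labelling sets embed compatibly with $\leq^!$.
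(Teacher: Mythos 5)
The key inductive step fails. Your claim that $\widetilde{D}_J=\bigcap_{j\in J} D_{j,(n-1)!}$ (intersecting inside $\radice[\overrightarrow{(n-1)}!]{(X,D)}$) is a $\mu_{J,\overrightarrow{(n-1)}!}$-gerbe over $D_J$ is false when $D$ has more than one component. The stack $\radice[\overrightarrow{(n-1)}!]{(X,D)}$ carries stackiness along all $N$ components, not just those indexed by $J$, so $D_{J,\overrightarrow{(n-1)}!}$ picks up larger isotropy along the deeper strata contained in $D_J$ (the paper explicitly flags this after display (\ref{defDJr}): $D_{J,\vec r}$ is a gerbe over $D_J$ only when $\vec r$ is replaced by $\vec r_J$). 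What is actually true is that $D_{J,\overrightarrow{(n-1)}!}$ is a $\mu_{J,\overrightarrow{(n-1)}!}$-gerbe over the \emph{root stack} $\radice[\overrightarrow{(n-1)}!^L]{(D_J,D^L)}$, where $L=I\setminus J$ and $D^L=\bigcup_{i\in L}(D_i\cap D_J)$. So Lemma \ref{lemma: E} decomposes $\Perf(D_{J,\overrightarrow{(n-1)}!})$ into copies of $\Perf(\radice[\overrightarrow{(n-1)}!^L]{(D_J,D^L)})$, not of $\Perf(D_J)$, and a further decomposition of that root-stack category is still needed.

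This is why the paper uses a nested induction: the outer induction is on the number $N$ of irreducible components, and it is the outer inductive hypothesis applied to the pair $(D_J,D^L)$ (which has $|L|<N$ components) that finishes the decomposition of $\Perf(\radice[\overrightarrow{(n-1)}!^L]{(D_J,D^L)})$ into copies of $\Perf(D_{J'})$ with $J\subseteq J'$. Your indexing reflects the same gap: your $\eta=\chi/(n-1)!+\xi$ ranges only over $\bZ_{J,\vec n!}\setminus\bZ_{J,\overrightarrow{(n-1)}!}$, whereas the summands coming from $\cB_J$ must jointly account for $\bZ_{I,\vec n!}\setminus\bZ_{I,\overrightarrow{(n-1)}!}$; a third index $\rho\in\bZ_{L,\overrightarrow{(n-1)}!}$ from the root stack of the stratum is required, and the superscript $J'$ of the resulting factor $\cA_\chi^{J',!}$ is determined by the nonzero coordinates of $\chi=\zeta/(n-1)!+\xi+\rho$ and can be strictly larger than $J$. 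In particular the ``exact analogy with the one-component proof'' only holds when $N=1$, because there $L=\varnothing$ and the root stack of the stratum is the stratum itself. The rest of your outline (base case at $n=2$, use of Lemma \ref{lem:preord}, and the cofinality / compatibility argument for part (3)) matches the paper, but the single-variable induction cannot close without the extra induction on $N$.
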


The proof of Proposition \ref{sod!snc}  that we give below 
depends on a somewhat involved inductive argument. A much simpler proof is possible, at the price of a mild reduction of generality which still covers most examples of interest. We sketch it in Remark \ref{goodproof} below. 
\begin{proof}
By Lemma \ref{lemimm} 
$\mathbb{Z}_{I,\vec r} = \coprod_{J \subset I} 
\mathbb{Z}_{J,\vec r}^*$ and thus the  Proposition gives a description of all the factors appearing in the psod of 
$
\Perf(\radice[\vec{n}!]{(X,D)})$.

It is actually more convenient to prove first part $(2)$ of the Proposition, and deduce from there part  $(1).$ The compatibility with the psod-s, part $(3)$, follows   automatically. The proof involves a nested induction, first on the number $N$ of irreducible components of $D$, and then on the index $n$ appearing in the statement of Proposition \ref{sod!snc}. 
 Let us clarify the structure of the induction.
 \begin{itemize}
 \item[(a)] The basis step of the induction on $N$ consists in the proof of the statement of 
 Proposition \ref{sod!snc} for $N=1$ and arbitrary $n$. This is given by Proposition \ref{sod!}.  
 \item[(b)]  The inductive step involves proving Proposition \ref{sod!snc} in the case of a divisor $D$ with $N$ irreducible components:  as inductive hypothesis  we assume that  Proposition \ref{sod!snc} holds, for all $n \in \mathbb{N}$, in the case of a  divisor  $D$ with $M$ irreducible components, where $M$ is any integer smaller than $N$.  
 \item[(c)]  
 We establish inductive step $(b)$ via a second induction, this time on the index $n$.  We will spend the rest of the proof explaining the basis step $n=2$ and the inductive step $n-1 \to n$. This will imply inductive step $(b)$ and conclude the proof.
    \end{itemize}

{\bf Basis: $n=2.$} We have the sod on 
$\Perf(\radice[\vec{2}!]{(X,D)})$ given by Proposition \ref{prop: sodvsimplnc}, with the required subcategories $
\cA_J^!$ for $J\in S_I^*$ and $\cA_\chi^{J,!}$ with $\chi \in \mathbb{Z}_{I,\vec{2}}$. 

{\bf Inductive step: $n-1 \to n.$} The proof is similar to the one of Proposition \ref{sod!}, thus we limit  ourselves to an abbreviated treatment of the argument. We use the natural identification 
\begin{equation}
\label{eq!vec}
\radice[\vec{n}!]{(X,D)} \simeq \radice[\vec{n}]{\left(\radice[{\overrightarrow{(n-1)}!}]{(X,D)}, D_{\overrightarrow{(n-1)}!}\right)}.
\end{equation}
Applying Proposition \ref{prop: sodvsimplnc} to (\ref{eq!vec}) yields a psod of type  $(S_I, \leq)$ 
$$\Perf(\radice[\vec n]{(X,D)})=  \langle  \cB_J, J  \in  (S_I, \leq)  \rangle.$$
Additionally each summand $\cB_J$ for $J\in S_I^*$ carries a psod 
$$
\cB_J = \langle \cB_{\zeta}, \zeta \in (\mathbb{Z}_{J,\vec n}^*, \leq)  \rangle,
$$ 
where for all $\zeta \in \mathbb{Z}_{J,\vec n}^*$ there is an equivalence $\cB_{\zeta} \simeq \Perf\left(D_{J, \overrightarrow{(n-1)}!}\right)$.   We will realize the summands $\cA^ {J,!}_{\chi}$ appearing in the statement of Proposition \ref{sod!snc} as semi-orthogonal factors of the categories $\cB_{\zeta}$.

Fix $J$ in $S_I^*$. If $D_J=\varnothing$ then  $\cB_J = 0$, and we set   $\cA^ {J,!}_{\chi}:=0$ for all $\chi \in \mathbb{Z}_{J,\vec{n}!}^*$.  
Assume next that $D_J$ is non-empty. Set $L:=I\setminus J$ and $M:=| L |$.  
The stratum $D_J$ carries a simple normal crossing divisor
\footnote{Some, or even all, the intersections $D_i \cap D_J$, $i \in L$,   might be empty. We can nonetheless argue as if $D^L$ had $M$ distinct irreducible components: the empty strata will give rise to zero categories, and therefore will give no contribution to 
the  psod-s.} 
$$
D^L:=  \bigcup_{i \in L} (D_i \cap D_J) \subset D_J
$$ 
Denote by $\overrightarrow{(n-1)}!^ L \in \mathbb{N}^L$ the diagonal vector with entries all equal to $(n-1)!$ 
$$
\overrightarrow{(n-1)}!^ L :=   ((n-1)!, \ldots, (n-1)!  ) \in \mathbb{N}^L. 
$$ 
The substack 
$D_{J, \overrightarrow{(n-1)}!}$  is a $\mu_{J,\overrightarrow{(n-1)}!}$-gerbe over 
the $\overrightarrow{(n-1)}!^L$-th root stack of $D_J$ 
with respect to $D^ L$. Thus by Lemma \ref{lemma: E} we have a decomposition
$$
\Perf(D_{J, \overrightarrow{(n-1)}!})  \simeq \bigoplus_ { \mathbb{Z}_{J,\overrightarrow{(n-1)}!}}\Perf\left(\radice[\overrightarrow{(n-1)}!^L ]{(D_J,D^ L)}\right) =: \bigoplus_{\xi \in \mathbb{Z}_{J,\overrightarrow{(n-1)}!}}\cB_{\zeta, \xi}.
$$
Additionally, since $M < N$, we can assume  by the inductive hypothesis  that Proposition \ref{sod!snc} applies to the root stack 
$ \radice[\overrightarrow{(n-1)}!^L ]{(D_J,D^ L)}$. This gives us a psod
$$\cB_{\zeta, \xi} \simeq  \Perf\left(\radice[\overrightarrow{(n-1)}!^L ]{(D_J,D^ L)}\right) =   \left \langle  \cB_{\zeta, \xi, \rho} ,  \rho \in  \left (\mathbb{Z}_{L, \overrightarrow{(n-1)}! }  ,  \leq^! \right)  \right \rangle.$$
Similarly to the proof of Proposition  \ref{sod!}, we identify the 
subcategories $\cB_{\zeta, \xi, \rho}$ with   factors $\cA_\chi^{J,!}$ for $\chi \in \mathbb{Z}_{I, \vec{n}!}$  appearing in the statement of the proposition by setting $$
\cA_{{\frac{\zeta}{(n-1)!} + \xi + \rho}}^{J,!}:= \cB_{\zeta, \xi, \rho}
$$
 where we are identifying $\zeta$, $\xi$ and $\rho$ with elements of $\bZ_{I,\vec n !}$ in the natural manner. Note the value of $\frac{\zeta}{(n-1)!} + \xi + \rho$ ranges exactly over the set $\bZ_{I,\overrightarrow n !}\setminus \bZ_{I,\overrightarrow{(n-1)}!}$ for $\zeta \in \mathbb{Z}_{J,\vec n}^*$, $\xi \in \mathbb{Z}_{J,\overrightarrow{(n-1)}!}$ and  $\rho \in  \mathbb{Z}_{L, \overrightarrow{(n-1)}! }$.  

Now, by the inductive hypothesis  $\cB_{\vec{0}} \simeq \Perf\left(\radice[\overrightarrow{(n-1)}!]{(X,D)}\right)$ carries a psod of type $(\mathbb{Z}_{I, \overrightarrow{(n-1)}!}, \leq^!) $
$$
\cB_{\vec{0}} \simeq \Perf\left(\radice[\overrightarrow{(n-1)}!]{(X,D)}\right) = \langle  \cA_\chi^{J,!}, \chi \in (\mathbb{Z}_{I, \overrightarrow{(n-1)}!}, \leq^!)  \rangle  ,
$$
such that:
\begin{itemize}
\item  $\mathcal{A}_{\vec{0}}^! \simeq \Perf(X)$, and 
\item  $\mathcal{A}_\chi^{J,!} \simeq \Perf(D_J)$ for all $\chi \in \mathbb{Z}_{J, \overrightarrow{(n-1)}!}^*$.
\end{itemize}  
As in the proof of Proposition \ref{sod!} we conclude the categories 
$\mathcal{A}_\chi^{J,!}, \chi \in \mathbb{Z}_{I, \vec{n}!}$ that we have just constructed  make up a psod of $\Perf(\radice[\vec{n}!]{(X,D)})$ of  type 
$(\mathbb{Z}_{I, \vec{n}!}, \leq^!)$ and this concludes the proof of part $(2)$ of the Proposition.

Now it is easy to proceed backwards and prove part $(1)$. For every $J \in S_I$ we define $\cA_J^!$   as the subcategory of $\Perf(\radice[\vec{n}!]{(X,D)})$ generated by the subcategories 
$
\cA_\chi^{J,!}$ {as $\chi$ varies in} $\mathbb{Z}_{J,\vec{n}!}^* \subset \mathbb{Z}_{I, \vec{n}!}$.
By construction the subcategories $\cA_J^!$ have the properties required by part $(1)$ of the Proposition, and this concludes the proof.  
\end{proof}

\begin{remark}
\label{goodproof}
It is possible to give a much simpler proof of Proposition \ref{sod!snc}  leveraging 
formal properties of the category of perfect complexes established 
in \cite{BFN}. This however requires to reduce generality, and assume that 
$X$ is a \emph{perfect stack} \cite[Definition 3.2]{BFN}: this is a large class of stacks containing for instance all quasi-compact schemes with affine diagonal. 

Let us sketch the argument assuming for simplicity that $D=D_1 \cup D_2$ has two components. 
Recall that  there is an equivalence
$$\radice[\vec{n}!]{(X,D)} \simeq \radice[n!]{(X,D_1)} \times_{X} \radice[n!]{(X,D_2)}.  
$$
Since $X$ is perfect, so are its root stacks. Then by \cite[Theorem 1.2]{BFN} there is an equivalence of categories
\begin{equation}
\label{eqtensorallproof}
\Perf \left ( \radice[\vec{n}!]{(X,D)} \right ) \simeq  \Perf \left (\radice[n!]{(X,D_1)} \right ) \otimes_{\Perf(X)}  \Perf \left ( \radice[n!]{(X,D_2)} \right ).
\end{equation}
 Now by Proposition \ref{prop: sodvsimplnc} we have psod-s 
\begin{equation}
\label{tensoffactors}
 \Perf \left (\radice[n!]{(X,D_1)} \right ) = \langle  \cB_{ \xi}^!, \xi \in (\mathbb{Z}_{n!}, \leq^!) \rangle, \quad  \Perf \left (\radice[n!]{(X,D_2)} \right ) = \langle  \cC_{ \xi'}^!, \xi' \in (\mathbb{Z}_{n!}, \leq^!) \rangle.
\end{equation}
 Equivalence (\ref{eqtensorallproof}) then implies that 
 $\Perf \left ( \radice[\vec{n}!]{(X,D)} \right )$ carries a psod whose semi-orthogonal factors are the tensor products of the factors appearing in (\ref{tensoffactors}): more precisely, for all 
 $\chi = (\xi, \xi' )  \in  \mathbb{Z}_{n!}\oplus\mathbb{Z}_{n!}$   set 
 $ \, 
 \cA_\chi := \cB_{\xi} \otimes_{\Perf(X)} \cC_{\xi'} 
 $.  
Then $\Perf \left ( \radice[\vec{n}!]{(X,D)} \right )$ carries a psod with the categories 
 $\cA_\chi$ as factors
\begin{equation}
\label{tensoffactorsII}
\Perf \left ( \radice[\vec{n}!]{(X,D)} \right ) =  \langle \cA_\chi = \cB_{\xi} \otimes_{\Perf(X)} \cC_{\xi'} \, , \,  \chi = (\xi, \xi') \in (\mathbb{Z}_{n!} \oplus \mathbb{Z}_{n!}, \leq^!) \rangle
\end{equation}
  For all $\xi, \xi' \in \mathbb{Z}_{n!}$ Theorem 1.2 of 
 \cite{BFN} yields equivalences \begin{itemize}
 \item $\cA_{(0,0)} = \cB_0 \otimes_{\Perf(X)} \cC_0 = \Perf(X) \otimes_{\Perf(X)} \Perf(X) \simeq \Perf(X)$,
 \item $\cA_{(\xi,0)} = \cB_{\xi} \otimes_{\Perf(X)} \cC_0 \simeq \Perf(D_1) \otimes_{\Perf(X)} \Perf(X) \simeq \Perf(D_1)$,  
 \item $\cA_{(0,\xi')} = \cB_{0} \otimes_{\Perf(X)} \cC_{\xi'} \simeq \Perf(X) \otimes_{\Perf(X)} \Perf(D_2) \simeq \Perf(D_2)$,
 \item $\cA_{(\xi,\xi')} = \cB_{\xi} \otimes_{\Perf(X)} \cC_{\xi'}\simeq \Perf(D_1) \otimes_{\Perf(X)}\Perf(D_2)\simeq \Perf(D_{\{12\}}).$  
  \end{itemize}
 Thus psod (\ref{tensoffactorsII}) has the same properties required by Proposition  \ref{sod!snc} and in fact it is easy to see that it coincides with it.  
 \end{remark}

\begin{theorem}
\label{mainsgncdiv}
 \mbox{ }
\begin{enumerate}[leftmargin=*] 
\item The category  $\Perf(\radice[\infty]{(X,D)})$ has a collection of subcategories $\cA_J^!$,  $J \in S_I$, such that: 
\begin{itemize}[leftmargin=*]
\item For all $J$, the subcategory $\cA_J^!$ is admissible. 
\item  For all $J \in S_I^*$ the category $\cA_J^!$  has a psod of type $((\mathbb{Q}/\mathbb{Z})_{J}^*, \leq^!)$ 
$$
\cA_J^! = \langle  \cA_{ \chi}^{J,!}, \chi \in ((\mathbb{Q}/\mathbb{Z})_{J}^*, \leq^!)  \rangle,
$$ 
and, for all $\chi \in (\mathbb{Q}/\mathbb{Z})_{J}^*$, there is an equivalence $\cA_{ \chi}^{J,!} \simeq \Perf(D_J)$.
\end{itemize}
\item  The  category $\Perf(\radice[\infty]{(X,D)})$ has a psod of type $((\mathbb{Q}/\mathbb{Z})_{I}, \leq^!)$ 
$$
\Perf(\radice[\infty]{(X,D)})=  \langle \cA_\chi^{J,!}, \chi \in  ((\mathbb{Q}/\mathbb{Z})_{I}, \leq^!)  \rangle,
$$
and for all $\chi \in (\mathbb{Q}/\mathbb{Z})_{J}^ *$ there is an equivalence $\cA_{ \chi}^{J,!} \simeq \Perf(D_J)$.
\end{enumerate}
\end{theorem}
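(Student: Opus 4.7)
The plan is to obtain Theorem \ref{mainsgncdiv} as a direct consequence of Proposition \ref{sod!snc} by passing to the colimit along the cofinal subsystem of factorial root stacks. I will first establish part (2), and then derive part (1) as a formal consequence.

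The starting observation is that by Proposition \ref{prop.db.irs}, together with the fact that $\{\vec{n}!\}_{n\in\bN}$ is cofinal in $\bN^N$ under divisibility, there is an equivalence
$$
\Perf(\radice[\infty]{(X,D)}) \simeq \varinjlim_{n\in\bN} \Perf(\radice[\vec{n}!]{(X,D)})
$$
taken along the fully faithful pullback functors $g_{\overrightarrow{(n+1)}!,\vec{n}!}^*$. By Proposition \ref{colim}, this colimit is realized as the union of the categories in the directed system, with morphism spaces computed at any sufficiently large stage. Proposition \ref{sod!snc}(2) equips each $\Perf(\radice[\vec{n}!]{(X,D)})$ with a psod of type $(\mathbb{Z}_{I,\vec{n}!},\leq^!)$, and part (3) of that proposition guarantees that these psod-s are compatible with the structure functors $g_{\overrightarrow{(n+1)}!,\vec{n}!}^*$ via the order-preserving inclusion $(\mathbb{Z}_{I,\vec{n}!},\leq^!) \hookrightarrow (\mathbb{Z}_{I,\overrightarrow{(n+1)}!},\leq^!)$, with each component $\cA_\chi^{J,!}$ mapped identically to its namesake at the next stage.

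Next I would verify that the psod structure passes to the colimit with indexing set
$$
\bigcup_{n\in\bN}(\mathbb{Z}_{I,\vec{n}!},\leq^!) = ((\mathbb{Q}/\mathbb{Z})_I,\leq^!),
$$
where the equality of underlying sets is immediate from the definition of normal factorial form, and the compatibility of orderings is built into Definition \ref{defpartord!}. For each $\chi \in (\mathbb{Q}/\mathbb{Z})_J^*$, the subcategory $\cA_\chi^{J,!}$ is well-defined in the limit because it is preserved by pullback at every finite stage; by Proposition \ref{colim} it remains a full subcategory equivalent to $\Perf(D_J)$. Semi-orthogonality between $\cA_\chi^{J,!}$ and $\cA_{\chi'}^{J',!}$ with $\chi <^! \chi'$ is a statement about mapping spaces that holds at a large enough finite level and is therefore preserved in the colimit. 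Generation follows because every object of $\Perf(\radice[\infty]{(X,D)})$ lies in some $\Perf(\radice[\vec{n}!]{(X,D)})$, which is itself generated by its finitely many semi-orthogonal factors.

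The main technical subtlety, and the step I expect to require the most care, is verifying that each $\cA_\chi^{J,!}$ remains \emph{admissible} in $\Perf(\radice[\infty]{(X,D)})$, i.e. that the inclusion has both adjoints. At each finite stage admissibility is known, and the right/left adjoint projection functors $r_\chi^{J,!}$ and $l_\chi^{J,!}$ are compatible with the pullbacks $g_{\overrightarrow{(n+1)}!,\vec{n}!}^*$ because they are intertwined with the compatible psod-s. Taking the colimit of these compatible projections along the directed system provides the required adjoints in the limit, either directly or by checking the defining adjunction relation on mapping spaces, which are computed at a finite stage by Proposition \ref{colim}. Alternatively, one can invoke Remark \ref{stabletriang} to reduce the verification of the psod axioms to the level of homotopy categories, where the argument is purely formal.

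Finally, part (1) of the theorem follows by defining $\cA_J^!$ as the subcategory of $\Perf(\radice[\infty]{(X,D)})$ generated by the $\cA_\chi^{J,!}$ with $\chi \in (\mathbb{Q}/\mathbb{Z})_J^*$; admissibility of $\cA_J^!$ and the internal psod of type $((\mathbb{Q}/\mathbb{Z})_J^*,\leq^!)$ on it are inherited from the corresponding statements in Proposition \ref{sod!snc}(1) by the same colimit argument, using the compatibility between the stratum-indexed psod and the character-indexed refinement.
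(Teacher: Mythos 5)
Your approach is essentially the same as the paper's, which simply states that ``the proof is the same as the one of Proposition \ref{sod!Q}'', i.e.\ take the colimit of the compatible psod-s from Proposition \ref{sod!snc} along the cofinal factorial subsystem using Proposition \ref{prop.db.irs} and Proposition \ref{colim}. Your treatment is more detailed --- in particular your discussion of why admissibility passes to the colimit (the projection functors commute with the pullbacks because the pullbacks respect the entire psod at each finite stage) correctly addresses a point the paper leaves implicit.
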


  \begin{proof}
  The proof is the same as the one of  Proposition \ref{sod!Q}. 
  \end{proof}

  \begin{remark}
  \label{reminifnitelymanypsods}
The psod-s constructed in Proposition \ref{sod!Q} and Theorem \ref{mainsgncdiv} depend on the choice of a directed system which is cofinal in $\mathbb{N}^N$ preordered by divisibility. We chose the set of 
  diagonal vectors with factorial entries, but different choices were possible and would have given rise to different psod-s. At the level of additive inviariants, and in particular K-theory, all choices yield identical splitting formulas. All these different psod-s should be connected via mutation patterns which give rise to canonical identifications of semi-orthogonal factors: we   leave this to future investigation.   \end{remark}
 
\section{Beyond  simple normal crossing divisors}
\label{beyond}
In this section we study sod-s of infinite root stacks of general normal crossing divisors, 
and of divisors with simplicial singularities. In both cases we will be able to reduce to the simple normal crossing setting. 
At the same time  genuinely new phenomena will arise.

In the general normal crossing case, the factors making up the psod on the infinite root stacks are not equivalent to the category of perfect complexes on the strata, but on the \emph{normalization of the strata}. In \cite{scherotzke2016logarithmic} it is proven that the categories of perfect complexes on infinite root stacks are invariant under some class of log blow-ups. This is the key ingredient in the construction of these psod-s. However since the results in \cite{scherotzke2016logarithmic} require working over a field of characteristic $0$, we are bound to make the same assumption here.

Constructing sod-s for infinite root stacks in the setting of divisors with what we call ``simple simplicial singularities'' is more straightforward. We do this in Section \ref{reducetoncviacan}. The proof depends on a cofinality argument which allows us to reduce directly to the (simple) normal crossing case.  
\subsection{Root stacks of normal crossings divisors}
\label{ncd}
Throughout this section we work over a field $\kappa$ of characteristic zero.

We will establish sod-s for  root stacks of normal crossings divisors which are not necessarily simple. 
As explained in the preliminaries (Section \ref{sec:pre.non.simple}) root stacks of non-simple normal crossing divisors cannot be defined via an iterated root construction, as in \cite{bergh2016geometricity}. This has to do with the fact that the self-intersections of the divisors create higher codimensional strata which are not correctly accounted for if we just  take iterated roots of the divisors themselves.  We rely instead on the general definition of root stacks of logarithmic schemes introduced by Borne and Vistoli \cite{borne-vistoli}.

 Let $X$ be an algebraic stack, and $D$ a normal crossings divisor in $X$.
 In this section we consider the $r$-th root stacks $\radice[r]{(X,D)}$ for $r\in \bN$ described in Section \ref{sec:pre.non.simple}, and the infinite root stack $\radice[\infty]{(X,D)}=\varprojlim_r \radice[r]{(X,D)}$ of the log stack $(X, D)$. Note that if $D$ happens to be simple normal crossings, then $\radice[r]{(X,D)}$ coincides with the root stack $\radice[\vec r]{(X,D)}$ of the previous section, where $\vec r$ is the vector $(r,\hdots, r)\in \bN^N$ and $N$ is the number of irreducible components  of $D$. 
 
 \subsubsection{Strictification of normal crossing divisors}
 \label{secstrictify}
 Note that $D$ equips $X$ with a canonical stratification in locally closed substacks. This stratification is most easily expressed by saying that the locally closed strata are the connected substacks of $X$ where the rank of the log structure (i.e. of the sheaf $\overline{M}=M/\cO_X^\times$, using standard notation for log structures) remains constant. These are also the connected substacks where the number of points in the fiber of the normalization map $\widetilde{D}\to D$ remains constant.  We denote $S_D$ the set of the closures of these strata, and we set $S_D^*:=S_D - \varnothing.$

 \begin{lemma}
 \label{resolnsnc}
Let $X$ be an algebraic stack, and $D$ a normal crossings divisor in $X$. Then there exists a finite sequence of log blow-ups 
$$
(\widetilde{X}, \widetilde{D}):=(X_n, D_{X_n}) \xrightarrow{\pi_n} \ldots \xrightarrow{\pi_2} (X_1, D_{X_1}) \xrightarrow{\pi_1} (X_0, D_{X_0}):=(X,D)
$$
with the following properties: \begin{enumerate}
\item  For all $0 \leq i \leq n$,
 $X_i$ is an algebraic stack with a normal crossings divisor $D_i$. 
\item Denote by $S_{{D_{X_i}}}$ the set of closures of the strata.  Then the map 
$\pi_i: (X_i, D_{X_i}) \to (X_{i-1}, D_{X_{i-1}})$ is  the blow-up of a regular  stratum $S \in S_{D_{i-1}}$.  
\item The divisor $\widetilde{D}$ of $\widetilde{X}$ is simple normal crossing. 
\end{enumerate}
\end{lemma}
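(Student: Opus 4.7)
The strategy is to induct on a numerical invariant that measures the failure of $D$ to be simple normal crossings, reducing it step by step by blowing up a deepest stratum where this failure occurs. First I would check that the strata in $S_D$ are automatically regular closed substacks of $X$: étale-locally, each stratum is cut out by the simultaneous vanishing of a subset of the local branches of $D$ in a smooth ambient space, hence is smooth, and smoothness descends from étale covers. In particular, the hypotheses of the lemma only require that we blow up things of the form $\bar S\in S_D$, and we are free to do so.

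For each stratum $\bar{S}\in S_D^{*}$, let $b(\bar{S})$ denote the number of local branches of $D$ at the generic point of $\bar{S}$ (equivalently, the rank of $\overline{M}_{X}$ there, or the number of preimages of a geometric point of $\bar{S}$ under the normalization $\tilde{D}\to D$), and let $c(\bar{S})$ denote the number of distinct global irreducible components of $D$ containing $\bar{S}$. Then $c(\bar{S})\le b(\bar{S})$, with equality at every stratum precisely when $D$ is simple normal crossing. I would define the invariant
\[
\delta(X,D) := \sum_{\bar{S}\in S_D^{*}} \bigl(b(\bar{S})-c(\bar{S})\bigr),
\]
a non-negative integer (finite because $S_D$ is finite under our finite type hypothesis). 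The goal then becomes: if $\delta(X,D)>0$, produce a log blow-up along a regular stratum of $D$ that strictly decreases $\delta$ while preserving the normal crossings property.

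The inductive step: if $\delta(X,D)>0$, choose a stratum $\bar{S}$ of maximal codimension with $b(\bar{S})>c(\bar{S})$, and let $\pi\colon (X',D')\to(X,D)$ be the blow-up of $X$ along $\bar{S}$ with $D'$ the total transform. In an étale chart where $(X,D)\cong(\mathbb{A}^{n},\{x_{1}\cdots x_{k}=0\})$ and $\bar{S}$ is cut out by a sub-tuple of the $x_{i}$, a direct calculation in each affine chart of the blow-up shows that $D'$ is still normal crossings (the total transform is monomial in the chart coordinates) and that $\pi$ is a log blow-up, since the ideal sheaf of $\bar S$ is monomial in the log structure associated with $D$. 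Moreover, the key global effect of the blow-up is that the exceptional divisor $E\subset X'$ now separates the branches of $D$ that were globally identified along $\bar{S}$: at every stratum of $D'$ lying above $\bar{S}$ the count $b-c$ strictly drops. Because $\bar{S}$ was chosen of maximal codimension, no new deeper non-snc strata are introduced, so $\delta(X',D')<\delta(X,D)$.

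The main obstacle is the careful étale-local bookkeeping required to verify that, after the blow-up, (a) the total transform $D'$ remains a normal crossings divisor so the process can be iterated inside the class of pairs admitted by the statement, and (b) the invariant $\delta$ genuinely drops strictly, which means showing that the new strata introduced above $\bar{S}$ (including those involving the exceptional divisor $E$) either satisfy $b=c$ or contribute strictly less to $\delta$ than $\bar{S}$ did. Once this local analysis is in place, the lemma follows by induction on $\delta(X,D)$, with the sequence of blow-ups terminating in $(\widetilde{X},\widetilde{D})$ having $\delta=0$, i.e.\ $\widetilde{D}$ simple normal crossings.
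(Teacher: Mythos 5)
Your plan rests on the claim that blowing up a single deepest stratum $\bar{S}$ with $b(\bar S)>c(\bar S)$ strictly decreases
\[
\delta(X,D)=\sum_{\bar S\in S_D^*}\bigl(b(\bar S)-c(\bar S)\bigr),
\]
via the assertion that ``at every stratum of $D'$ lying above $\bar S$ the count $b-c$ strictly drops.'' That assertion is false, and $\delta$ need not drop. The reason is that after blowing up $\bar S$ the new strata inside the exceptional $E$ may fail to involve \emph{all} the global components through $\bar S$: at such a new stratum $E$ contributes $+1$ to both $b$ and $c$, while the remaining local branches come from a strictly smaller subset of the original components, so $c$ does not increase enough to make $b-c$ drop.

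Concretely, take $X=\mathbb{A}^3=\Spec\kappa[t,x,y]$, $D_1=\{y^2=x^2(x+1)\}$ (an irreducible surface whose double locus is the $t$-axis $C=\{x=y=0\}$, with monodromy exchanging the two branches as one goes around $C$), $D_2=\{t=0\}$, and $D=D_1\cup D_2$. The deepest stratum is $\bar S=\{0\}$, with $b(\bar S)=3$ and $c(\bar S)=2$, so $b-c=1$. Blow up $\bar S$, with exceptional $E\cong\mathbb P^2$. In the exceptional fibre the three branches become three lines; at the point $T$ where $E$ meets the two branches of $\widetilde D_1$ but \emph{not} $\widetilde D_2$, one has $b(T)=3$ and $c(T)=2$ (only $E$ and $\widetilde D_1$ pass through), so $b-c=1$ is unchanged. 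The self-intersection stratum of $\widetilde D_1$ still contributes $b-c=1$. Hence $\delta(X',D')=\delta(X,D)$, and the induction does not close. Worse, iterating the same step (blow up $T$, then the analogous point $T'$ in the new exceptional, etc.) never terminates, because $\widetilde D_1$ always keeps a double curve passing through the next bad point.

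The paper's argument is structurally different and circumvents this. Writing $m$ for the maximal codimension of a non-simple stratum in $(X,D)$ and $Z_d$ for the union of non-simple strata of codimension $\geq d$, it first blows up the \emph{entire} locus $Z_m$, then the strict transform $\widetilde Z_{m-1}$ of $Z_{m-1}$, then $\widetilde Z_{m-2}$, and so on, terminating after $m-1$ steps by construction. The key point, not captured by $\delta$, is that the new non-simple strata created at each stage are contained in the strict transforms of the non-simple loci of the \emph{original} pair, so they are consumed by the subsequent blow-ups; the process is organized top-down along these strict transforms rather than re-selecting a deepest bad stratum of the current pair. Your local verifications (regularity of the closed strata, $D'$ staying normal crossings, $\pi$ being a log blow-up) are fine, but the termination argument needs to be replaced, either by this top-down scheme or by a genuinely monotone (e.g.\ lexicographic) invariant.
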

We say that $(\widetilde{X}, \widetilde{D})$ is a \emph{strictification} of $(X,D)$.  A proof of Lemma \ref{resolnsnc} 
is given in \cite{conrad}.  Let us explain briefly how to construct a strictification $(\widetilde{X}, \widetilde{D})$,  referring to 
\cite{conrad} for further details.

Let $S \in S_D$  be a stratum of codimension $d$.  We say that  $S$ is \emph{non simple} if $S$ is not a connected component of the intersection of $d$ distinct irreducible components of $D.$ Denote by $Z_d$ be the substack of $X$ given by the union of the non simple strata of $X$ of codimension at most $d.$ Let $m$ be the maximal index such that $Z_m \neq \varnothing.$ 
The substack $Z_m$ is regular. There is a sequence of inclusions
$$
Z_m \subset Z_{m-1} \subset \ldots \subset Z_1 = D
$$
Let $\pi_1: (X_1, D_1) \to (X,D)$ be the blow-up at $Z_m$. 
The strict transform of $Z_{m-1}$ under $\pi_1$ is regular, we denote it by $\widetilde{Z}_{m-1}$. We denote by $\pi_2:(X_2, D_2) \to (X_1, D_1)$ the blow-up at $\widetilde{Z}_{m-1}$.   
The strictification $(\widetilde{X}, \widetilde{D})$ is obtained by iterating this procedure for $m-1$ steps.

Let $\widetilde{I}$ be the set of irreducible components of 
$(\widetilde{X}, \widetilde{D})$, and let $S_{\widetilde{I}}$ be the set of strata. The iterated blow-up 
$$\widetilde{\pi}:=\pi_n \circ \ldots \circ \pi_1:(\widetilde{X}, \widetilde{D}) \longrightarrow (X,D)$$ 
maps strata  of $(\widetilde{X}, \widetilde{D})$  to strata of $(X,D)$.  
Let $S$ in $S_{D}$ be the image of the stratum $\widetilde{S}$ in $S_{\widetilde{I}}$. The restriction of 
$\widetilde{\pi}$  to $\widetilde{S}$ 
 $$
\widetilde{\pi}|_{\widetilde{S}}: \widetilde{S} \longrightarrow S
$$
can be described explicitly in terms of the geometry of iterated 
blow-ups. This requires some combinatorial book-keeping which, although elementary, quickly becomes quite intricate.

For  simplicity we will limit ourselves instead to give a qualitative description of the geometry of the strata of $(\widetilde{X}, \widetilde{S}).$ 
 We introduce an auxiliary class of stacks whose geometry is related in a simple way to the geometry of the strata of $(X,D).$ This is done in Definition \ref{auxiliarystacks}. It will be clear that 
 all strata of $(\widetilde{X}, \widetilde{S})$ are of this form, and this will be sufficient for our applications.

 \begin{definition}
 \label{auxiliarystacks} 
  Let $Y$ be an algebraic stack. We define recursively what it means for $Y$ to be \emph{of type $Z_i$}, where $m \geq i \geq 1$, starting from $i=m$: 
  \begin{enumerate}
 \item We say that $Y$ is \emph{of type $Z_m$} if there exists a stratum $S \in S_D,$ $\, S \subset Z_m \subset X\,$, and  maps 
$$
Y=Y_\gamma \xrightarrow{\gamma} Y_\beta \xrightarrow{\beta} Y_\alpha \xrightarrow{\alpha} S 
$$
where $\alpha,$ $\beta,$  and $\gamma$ are morphisms of the following type.
\begin{enumerate}
\item $Y_\alpha$ is the disjoint union of finitely many copies of the normalization of $S$. The map  
$\alpha : Y_\alpha \to S$ 
restricts to the normalization map on each copy.
 \item $\beta: Y_\beta \to Y_\alpha$ is a projective bundle.
 \item $\gamma: Y_\gamma \to Y_\beta$ is an iterated blow-up having the following two properties:
 \begin{itemize}
 \item it factors as a composite of blow-ups along regular centers, and
 \item  each of these centers is isomorphic to a projective bundle over a  stratum $S' \in S_D$ such that $S' \subseteq S$.
 \end{itemize}
\end{enumerate}
 \item  
We say that $X$ is \emph{of type $Z_i$} if there exists a stratum $S \in S_D,$ $\, S \subset Z_i \,$, and  maps 
$$
Y=Y_\gamma \xrightarrow{\gamma} Y_\beta \xrightarrow{\beta} Y_\alpha \xrightarrow{\alpha} S 
$$
where $\alpha,$ $\beta$  are as above, and  
$\gamma: Y_\gamma \to Y_\beta$ factors as a composite of blow-ups along regular centers, and each of these centers is a stack of type 
$Z_j$ for some $m \geq j >i.$ 
\end{enumerate} 
We say that $Y$ is \emph{adapted to $S_D$} if it is of type 
$Z_i$ for some $m \geq i \geq 1$.
 \end{definition}
It follows from the definition that if $X$ is  adapted to $S_D$ then $X$ is regular.  
\begin{lemma}
\label{adaptedcarsod}
If $X$ is adapted to $S_D$, $\Perf(X)$ admits a  sod such that all its semi-orthogonal factors are of the form 
$ \, 
\Perf(S^\vee)
\, $, 
where $S^\vee$ is the normalization of a stratum $S$ in $S_D.$
\end{lemma}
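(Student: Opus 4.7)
The plan is to argue by induction on $i$, starting from the base case $i = m$ and descending to $i = 1$. Throughout I will invoke two classical semi-orthogonal decompositions lifted to the $\infty$-categorical setting: Orlov's blow-up formula, which says that for a blow-up $\widetilde{Y} \to Y$ along a regular center $C$ of codimension $c$ the category $\Perf(\widetilde{Y})$ admits a sod consisting of one copy of $\Perf(Y)$ and $c-1$ copies of $\Perf(C)$; and the projective bundle formula, which decomposes $\Perf(\mathbb{P}(E))$ for a vector bundle $E$ on $Z$ into finitely many copies of $\Perf(Z)$. Both of these hold in $\SCat$ for the kind of stacks we are considering.

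For the base case $i = m$, write $Y = Y_\gamma \xrightarrow{\gamma} Y_\beta \xrightarrow{\beta} Y_\alpha \xrightarrow{\alpha} S$ as in Definition \ref{auxiliarystacks}. The first step is to observe that $\Perf(Y_\alpha)$ is a direct sum of copies of $\Perf(S^\vee)$, since $Y_\alpha$ is a disjoint union of copies of $S^\vee$. Applying the projective bundle formula to $\beta$ yields a sod of $\Perf(Y_\beta)$ whose factors are copies of $\Perf(S^\vee)$. Then $\gamma$ is an iterated blow-up along regular centers, each of which is a projective bundle over some stratum $S' \subseteq S$ in $S_D$. Iterating Orlov's formula along this sequence, and then applying the projective bundle formula to each center, converts $\Perf(Y) = \Perf(Y_\gamma)$ into a sod all of whose factors are of the required form $\Perf((S')^\vee)$, completing the base case.

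For the inductive step, assume the conclusion holds for stacks of type $Z_j$ with $j > i$, and let $Y$ be of type $Z_i$. The argument proceeds exactly as in the base case, except that now the blow-up centers appearing in the iterated blow-up $\gamma$ are themselves stacks of type $Z_j$ for some $j > i$. By the inductive hypothesis, $\Perf$ of each such center already admits a sod with factors $\Perf(S^\vee)$ for various strata $S \in S_D$. Substituting these inductively guaranteed decompositions into the iterated Orlov decomposition of $\gamma$, together with the prior treatment of $\alpha$ and $\beta$, gives the desired sod for $\Perf(Y)$.

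The principal obstacle is bookkeeping: one needs to verify that all the nested sod-s can indeed be assembled into a single sod of $\Perf(Y)$. Concretely, at each stage we must check that the relevant factors are admissible and mutually semi-orthogonal, which is the content of Lemma \ref{lem:preord} on joins of preordered sod-s applied repeatedly. The regularity hypothesis of Orlov's formula is ensured by the very definition of ``adapted to $S_D$'', which only involves blow-ups along regular centers, so no additional verification is needed beyond appealing to the inductive construction. No explicit description of the total ordering on the factors is needed for this lemma, which makes the gluing step clean.
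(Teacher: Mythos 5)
Your proof is correct and takes essentially the same approach as the paper: the paper's argument likewise rests on the disjoint-union decomposition, the projective bundle formula, and Orlov's blow-up formula, and then simply asserts that the conclusion "follows immediately from Definition~\ref{auxiliarystacks}." You have made explicit the descending induction on $i$ and the gluing of nested sod-s (via Lemma~\ref{lem:preord}) that the paper leaves implicit.
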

\begin{proof}
The category of perfect complexes over a disjoint union decomposes as a direct sum of the categories corresponding to each connected component. If $E \to X$ is a projective bundle then $\Perf(E)$ admits a 
sod where all semi-orthogonal factors are equivalent to $\Perf(X)$ \cite[Example 3.2]{kuznetsov2015semiorthogonal}. By Orlov blow-up formula, if $X$ is regular and 
$Y \to X$ is a blow-up along a regular center $Z \subset X$,  
$\Perf(Y)$ admits a sod whose factors are equivalent to either 
$\Perf(X)$ or $\Perf(Z)$  \cite[Theorem 3.4]{kuznetsov2015semiorthogonal}.  
Then the statement follows immediately from Definition \ref{auxiliarystacks}.
\end{proof}

\begin{lemma}
 \label{strataareadapted}
All strata $\widetilde{S}$ in $S_{\widetilde{I}}$ are adapted to $S_D.$ \end{lemma}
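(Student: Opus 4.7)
The plan is to prove the lemma by induction on the step in the strictification procedure. Writing $\widetilde{\pi}=\pi_n\circ\cdots\circ \pi_1$, we blow up successively the (possibly strict transforms of) substacks $Z_m, Z_{m-1}, \ldots, Z_1$, where each $Z_j$ collects the non-simple strata of codimension at most $j$. Accordingly, I would proceed by downward induction on $j$, starting at $j=m$, and verify that after blowing up $Z_m, \widetilde{Z}_{m-1},\ldots, \widetilde{Z}_j$, every stratum of the resulting log stack is of type $Z_k$ for some $m\geq k\geq j$ in the sense of Definition \ref{auxiliarystacks}.

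For the base case $j=m$, first observe that $Z_m$ is regular and that on each connected component $S$ of $Z_m$, the inclusion $S\hookrightarrow X$ factors through the normalization $S^\vee\to S$ in the sense that over $S$ the divisor $D$ has several formal branches meeting along $S$. Blowing up $Z_m$ produces an exceptional divisor $E_1\to Z_m$ which, over each such component $S$, is a projective bundle, while the strict transforms of the branches of $D$ containing $S$ become disjoint on $E_1$. I would use this to identify the strata of $(X_1,D_{X_1})$ lying over (the interior of) $Z_m$ with connected components of $(\text{copies of }S^\vee)\times\mathbb{P}^r$, matching exactly the data of the composition $Y_\beta\to Y_\alpha\to S$ in Definition \ref{auxiliarystacks}(1); strata disjoint from $E_1$ are unchanged.

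For the inductive step from $j+1$ to $j$, I would distinguish two kinds of strata of $(X_{m-j+1}, D_{X_{m-j+1}})$ appearing after the next blow-up:
(i) strict transforms of strata already present in $(X_{m-j},D_{X_{m-j}})$, which differ from their predecessor by a blow-up along a regular center obtained by intersecting with $\widetilde{Z}_j$, and
(ii) genuinely new strata lying inside the new exceptional divisor, which by the universal property of blowing up are iterated projective bundle constructions over strata of $\widetilde{Z}_j$. In case (i), the inductive hypothesis supplies a description of the original stratum of some type $Z_k$ with $k>j$, and the new blow-up only modifies the $\gamma$-morphism by composing with a blow-up whose center is contained in a stratum of $S_D$ lying in $Z_j$; this matches the structure required for type $Z_j$ (via the clause allowing $\gamma$ to factor through blow-ups whose centers are themselves of type $Z_k$, $k>j$). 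In case (ii), the new strata project via a projective bundle to strata of $\widetilde{Z}_j$, which by inductive hypothesis are of type $Z_k$ for $k>j$, and a direct unwinding of the definitions again gives type $Z_j$.

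The main obstacle will be the combinatorial bookkeeping of strata under iterated blow-ups: one must check carefully that the centers appearing in the $\gamma$-maps of Definition \ref{auxiliarystacks} are of the prescribed form (projective bundles over strata of $S_D$ contained in the appropriate $S$, or stacks of an already handled type $Z_k$), and that the base strata of the projective bundles $\beta$ are indeed disjoint unions of copies of the normalization of a stratum $S\in S_D$. This is ultimately a consequence of the fact that $Z_m,\widetilde{Z}_{m-1},\ldots$ are regular and that normalizing a non-simple stratum separates its local branches, which is exactly what the first blow-up in its history accomplishes; I would make this precise by working \'etale-locally where the divisor becomes simple normal crossings and the blow-up structure is standard.
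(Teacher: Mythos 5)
The paper's own proof of Lemma \ref{strataareadapted} is a single sentence---``This follows because the strictification $(\widetilde{X}, \widetilde{D})$ is an iterated blow-up of $(X,D)$''---and your plan is the natural elaboration: a downward induction on the strictification step, separating strict transforms of old strata from new strata in the exceptional divisor, whose recursion mirrors the recursion built into Definition \ref{auxiliarystacks} (that definition was clearly reverse-engineered from exactly this blow-up tower). Your structure is sound; the places to tighten are (a) the implicit cleanliness assumption that each stratum meets the next center $\widetilde{Z}_j$ either transversally or via containment, so that the strict transform is indeed obtained by blowing up along a regular center---true here because strata and centers are both unions of branches of the normal crossings divisor, but worth stating explicitly rather than deferring to \'etale-local computation---and (b) your phrasing in case (i), where you first say the center is ``contained in a stratum of $S_D$ lying in $Z_j$'' but Definition \ref{auxiliarystacks}(2) actually requires the center of each blow-up in $\gamma$ to be a stack of type $Z_k$ with $k>j$; the parenthetical corrects this, but it means your inductive statement must assert the ``type $Z_k$'' property for all strata of the intermediate stacks $(X_i, D_{X_i})$, in particular for the strata of $\widetilde{Z}_j$, not just for strata that survive to $\widetilde{X}$.
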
 
 \begin{proof}
This follows because the 
strictification $(\widetilde{X}, \widetilde{D})$ is an iterated blow-up of $(X,D)$.   
 \end{proof}
 
\subsubsection{Reduction to the simple normal crossing case}
The next result, which was proved in \cite{scherotzke2016logarithmic}, allows us to 
reduce  to the simple normal crossing case, which was studied in Section  \ref{sncd}. We stress that, as in \cite{scherotzke2016logarithmic}, we need to assume that the ground ring $\kappa$ is a field of characteristic zero.  
 \begin{proposition}[Proposition 3.9, \cite{scherotzke2016logarithmic}]
 \label{excisionpara}
 Let $(X',D') \to (X,D)$ be a log blow-up such that $(X', D')$ is a again an algebraic stack with a normal crossing divisor. Then there is an equivalence of $\infty$-categories
\begin{equation}
\label{equationparabolicexcision}
\Perf(\radice[\infty]{(X',D')}) \simeq \Perf(\radice[\infty]{(X,D)}).
\end{equation}
 \end{proposition}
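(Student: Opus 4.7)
The plan is to establish the equivalence by constructing a natural pullback functor $\pi^*\colon \Perf(\radice[\infty]{(X,D)}) \to \Perf(\radice[\infty]{(X',D')})$ induced by the log blow-up $\pi\colon (X',D') \to (X,D)$, and then showing it is fully faithful and essentially surjective. The naturality of the infinite root stack construction yields a morphism $\pi_\infty\colon \radice[\infty]{(X',D')} \to \radice[\infty]{(X,D)}$, which defines $\pi^*$.

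First, I would reduce the problem to a finite-level statement by invoking Proposition \ref{prop.db.irs}, which expresses both categories of perfect complexes as filtered colimits over the systems of finite root stacks. The log blow-up $\pi$ induces compatible morphisms $\pi_r\colon \radice[r]{(X',D')} \to \radice[r]{(X,D)}$ for every $r \in \bN$, so the pullback $\pi_\infty^*$ is the filtered colimit of the $\pi_r^*$. The key observation is that while no individual $\pi_r^*$ is an equivalence, the system stabilizes in the colimit: a log blow-up is invisible after passing to sufficiently divisible roots, because log blow-ups correspond to subdivisions of the associated Kato fan and taking infinite roots effectively absorbs all such subdivisions.

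Second, I would reduce to an \'etale-local computation. Since root stacks, categories of perfect complexes, and filtered colimits all behave well under \'etale descent, it suffices to check the equivalence after restricting to an \'etale cover. Locally, $(X,D)$ becomes a standard toric pair $\Spec \kappa[P]$ for a sharp fs monoid $P$, and a log blow-up corresponds to a subdivision $\{P \to P_i\}$ of $P$ obtained from a fan refinement. Over such an affine chart, I would exhibit a cofinal subsystem of indices $r$ together with explicit isomorphisms between the corresponding finite root stacks of $(X,D)$ and $(X',D')$ after base change to the subdivision. Concretely, once $r$ is divisible by integers determined by the subdivision, the pullback $\pi_r$ becomes a morphism whose pullback admits a retraction on perfect complexes, and passing to the colimit yields a genuine equivalence.

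The main obstacle will be organizing the combinatorial bookkeeping at step two: log blow-ups are defined by a universal property and their local models can be intricate, so identifying the correct cofinal subsystem of root indices and checking compatibility of the local equivalences with the descent data requires care. A secondary difficulty is ensuring the equivalence is established at the level of stable $\infty$-categories rather than merely triangulated categories, which means one must track functorial enhancements throughout and use that filtered colimits in $\SCat$ are computed as in Proposition \ref{colim}, so that fully-faithfulness at each finite stage propagates to the limit.
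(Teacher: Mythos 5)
The paper's own proof is a one-line deferral: it cites Proposition 3.9 of \cite{scherotzke2016logarithmic}, noting that the statement there is phrased for bounded derived categories with $X$ regular, and asserts that the same argument applies unchanged in this more general setting. You instead sketch an actual argument, which is a legitimate thing to do, but your sketch contains a conceptual gap at the decisive step.

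Your overall plan (pass to the filtered colimit description via Proposition \ref{prop.db.irs}, reduce \'etale-locally to toric models, identify a cofinal subsystem of root indices) is the correct framework, and it is broadly in line with the strategy of \cite{scherotzke2016logarithmic}. The trouble is with the heuristic driving your step two: ``a log blow-up is invisible after passing to sufficiently divisible roots'' and ``once $r$ is divisible by integers determined by the subdivision, $\pi_r$ becomes a morphism whose pullback admits a retraction.'' This is not quite right. For every finite $r$, and indeed in the limit $r\to\infty$, the induced map $\pi_r\colon \radice[r]{(X',D')}\to \radice[r]{(X,D)}$ remains a nontrivial (stacky) modification; in the basic example where $X=\bA^2$, $D=V(xy)$ and $X'=\mathrm{Bl}_0 X$, the finite-level map in a chart is $(c,d)\mapsto (c,cd)$ after replacing coordinates by $r$-th roots, which is a blow-up at the origin for \emph{every} $r$. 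The stacks $\radice[\infty]{(X',D')}$ and $\radice[\infty]{(X,D)}$ are genuinely non-isomorphic; only their $\infty$-categories of perfect complexes agree. No choice of $r$ makes $\pi_r$ close to an isomorphism.

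The statement about a ``retraction'' is also misleadingly packaged. The projection formula together with $R\pi_{r,*}\cO\simeq \cO$ already gives $\pi_{r,*}\pi_r^*\simeq\id$, hence fully faithfulness of $\pi_r^*$, for \emph{all} $r$, with no divisibility condition needed. That is not the hard part. The hard part is essential surjectivity in the colimit: one must show that for any perfect complex $\cF$ on $\radice[r]{(X',D')}$, there exists $r' $ with $r\mid r'$ such that $g_{r',r}^*\cF$ lies in the essential image of $\pi_{r'}^*$. Your proposal simply asserts that ``passing to the colimit yields a genuine equivalence'' without explaining why this holds; but each $\pi_r^*$ is fully faithful and \emph{not} essentially surjective, so a colimit of such functors is not automatically an equivalence. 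The mechanism that makes it work (roughly, that the exceptional Cartier divisors on the root stacks of the blow-up are pulled back, up to sufficiently divisible root indices, from combinations of the universal divisors on the root stacks of the base, so that sheaves concentrated near the exceptional locus become pullbacks after a further root extraction) is precisely the content of the local computation in \cite{scherotzke2016logarithmic}, and it is the step your sketch is missing.
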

 \begin{proof}
 Proposition 3.9 of  \cite{scherotzke2016logarithmic} is formulated in terms of bounded derived categories and assumes that $X$ is regular: however the same proof works in this more general setting. \end{proof}
 
Let $X$ be an algebraic stack and let $D$ be a normal crossing divisor. Let 
$(\widetilde{X}, \widetilde{D})$ be the strictification of $(X,D)$ constructed in Section \ref{secstrictify}. 
As before let $\widetilde{I}$ be the set of divisors of $(\widetilde{X}, \widetilde{D})$, and let $S_{\widetilde{I}}$ 
be the preordered set of strata, and $S_{\widetilde{I}}^* = S_{\widetilde{I}} - \{\varnothing\}$.

\begin{theorem}
\label{mainncdivsod}
\mbox{ }
\begin{enumerate}[leftmargin=*]
\item
 The category  $\Perf(\radice[\infty]{(X,D)})$ has a collection of subcategories $\cA_J^!$, $J \in S_{\widetilde{I}}$, such that: 
\begin{itemize}[leftmargin=*]  
\item For all $J$, $\cA_J^!$ is admissible.
\item The category $\cA_{\varnothing}^! = \Perf(\widetilde{X})$ has a psod 
whose semi-orthogonal factors are 
\begin{itemize}
\item[(a)]  $\Perf(X)$, and 
\item[(b)]  factors of the form $ \, 
\Perf(S^\vee)
\, $, 
where $S^\vee$ is the normalization of a stratum $S$ in $S_{D}^*$.
\end{itemize}
\item  For all $J \in S_{\widetilde{I}}^*$ the category $\cA_J^!$  has a psod of type $((\mathbb{Q}/\mathbb{Z})_{J}^*, \leq^!)$ 
$$
\cA_J^! = \langle  \cA_{ \chi}^{J,!}, \chi \in ((\mathbb{Q}/\mathbb{Z})_{J}^*, \leq^!)  \rangle. 
$$ 
Additionally, for all $\chi \in (\mathbb{Q}/\mathbb{Z})_{J}^*$, $\cA_{ \chi}^{J,!}$ has  a psod 
whose semi-orthogonal factors are of the form  
$ \, 
\Perf(S^\vee)
\, $, 
where $S^\vee$ is the normalization of a stratum $S$ in $S_{D}^*$.
\end{itemize}
\item The  category $\Perf(\radice[\infty]{(X,D)})$ has a psod whose semi-orthogonal factors are given by the 
factors of the psod-s of $\cA_{\varnothing}^! = \Perf(\widetilde{X})$ and $\cA_{ \chi}^{J,!}$ described above.
\end{enumerate}
\end{theorem}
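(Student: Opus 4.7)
The strategy is to reduce to the simple normal crossings case by passing to the strictification $(\widetilde{X}, \widetilde{D})$ constructed in Section \ref{secstrictify}, and then to unpack the resulting semi-orthogonal factors using the blow-up description of the strata of $\widetilde{X}$. The first step is to invoke Proposition \ref{excisionpara} applied to the sequence of log blow-ups $\widetilde{\pi}\colon (\widetilde{X}, \widetilde{D}) \to (X,D)$; by iterating the equivalence a finite number of times we obtain
$$
\Perf(\radice[\infty]{(X,D)}) \simeq \Perf(\radice[\infty]{(\widetilde{X}, \widetilde{D})}).
$$
Since $\widetilde{D}$ is simple normal crossing with component set $\widetilde{I}$, Theorem \ref{mainsgncdiv} equips $\Perf(\radice[\infty]{(\widetilde{X}, \widetilde{D})})$ with a psod indexed by $S_{\widetilde{I}}$ whose $\cA_\varnothing^!$ summand is $\Perf(\widetilde{X})$, and whose $\cA_J^!$ for $J \in S_{\widetilde{I}}^*$ admits a further psod of type $((\mathbb{Q}/\mathbb{Z})_J^*, \leq^!)$ with all factors $\cA_\chi^{J,!} \simeq \Perf(\widetilde{D}_J)$.

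The next step is to refine each of the categories appearing as semi-orthogonal factors so that they are expressed in terms of $X$ itself and of normalizations of strata of $(X,D)$. For the factor $\cA_\varnothing^! \simeq \Perf(\widetilde{X})$, we apply Orlov's blow-up formula recursively to the tower
$$
\widetilde{X} = X_n \to X_{n-1} \to \cdots \to X_0 = X,
$$
where each $\pi_i$ is the blow-up along a regular stratum $Z_i \in S_{D_{X_{i-1}}}$. Each step produces a psod of $\Perf(X_i)$ with one factor equivalent to $\Perf(X_{i-1})$ and the remaining factors equivalent to $\Perf(Z_i)$ (tensored with line bundles corresponding to the exceptional divisor). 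Since each $Z_i$ is (by the construction in Section \ref{secstrictify}) adapted to $S_D$, Lemma \ref{adaptedcarsod} further decomposes $\Perf(Z_i)$ into factors of the form $\Perf(S^\vee)$, where $S^\vee$ is the normalization of a stratum $S\in S_D^*$. Composing these psod-s using Lemma \ref{lem:preord} yields a psod of $\Perf(\widetilde{X})$ with one factor $\Perf(X)$ and the rest of the form $\Perf(S^\vee)$, as required.

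For each $J \in S_{\widetilde{I}}^*$ and each $\chi \in (\mathbb{Q}/\mathbb{Z})_J^*$, the factor $\cA_\chi^{J,!} \simeq \Perf(\widetilde{D}_J)$ is the category of perfect complexes on a stratum of $(\widetilde{X}, \widetilde{D})$. By Lemma \ref{strataareadapted}, $\widetilde{D}_J$ is adapted to $S_D$, and by Lemma \ref{adaptedcarsod} we obtain a psod of $\Perf(\widetilde{D}_J)$ whose factors are all of the form $\Perf(S^\vee)$ for $S \in S_D^*$. Finally, the passage from the finite indices $\vec{n}!$ to the infinite root stack is handled as in the proof of Proposition \ref{sod!Q}, using Proposition \ref{prop.db.irs} and the fact that factorials are cofinal in the divisibility ordering; this yields part (2) of the theorem by iteratively glueing all the sod-s via Lemma \ref{lem:preord}. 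The main technical obstacle is the bookkeeping required to guarantee that the various levels of semi-orthogonal decompositions (the one from Theorem \ref{mainsgncdiv}, the iterated Orlov blow-up formula on the strictification, and the further refinement provided by the adaptedness of each stratum) are mutually compatible. Compatibility is granted by the observation, recalled in Remark \ref{stabletriang}, that psod-s can be checked at the level of homotopy categories, together with the fact that admissibility is preserved under composition of semi-orthogonal inclusions.
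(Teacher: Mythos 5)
Your proof follows the same route as the paper's: reduce to the strictification $(\widetilde{X},\widetilde{D})$ via the log-blow-up invariance of Proposition~\ref{excisionpara}, apply Theorem~\ref{mainsgncdiv} to the resulting simple normal crossings pair, and then refine each semi-orthogonal factor $\Perf(\widetilde{D}_J)$ into pieces of the form $\Perf(S^\vee)$ using Lemmas~\ref{strataareadapted} and~\ref{adaptedcarsod} (and, for $J=\varnothing$, your explicit appeal to Orlov's blow-up formula is exactly the content of Lemma~\ref{adaptedcarsod} applied to $\widetilde{X}$). The final step you sketch, passing from finite indices $\vec{n}!$ to the infinite root stack, is redundant: Theorem~\ref{mainsgncdiv} is already stated for $\Perf(\radice[\infty]{(\widetilde{X},\widetilde{D})})$, so the limit has been taken before the refinement step and no further glueing over $n$ is needed.
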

\begin{proof}
By Proposition \ref{excisionpara} there is an equivalence 
$ \, 
\Perf(\radice[\infty]{(\widetilde{X},\widetilde{D})}) \simeq \Perf(\radice[\infty]{(X,D)})
\, $. 
Since $(\widetilde{X},\widetilde{D})$ is simple normal crossing, we can apply Theorem 
\ref{mainsgncdiv}. Let us use the notations of Theorem 
\ref{mainsgncdiv}: recall that the semi-orthogonal factors $\cA_{ \chi}^{J,!}$, $J \subset \widetilde{I}$, considered  there are equivalent to 
$\Perf(\widetilde{D}_J)$. The stack $\widetilde{D}_J$ is a stratum in $S_{\widetilde{I}}$ and thus, by Lemma \ref{strataareadapted}, 
 is adapted to $S_D$. By Lemma \ref{adaptedcarsod}, $\Perf(\widetilde{D}_J)$ carries a psod whose factors are of the form 
$ \, 
\Perf(S^\vee)
\, $, 
where $S^\vee$ is the normalization of a stratum $S$ in $S_D$. This concludes the proof. 
 \end{proof}
 
 \subsection{Root stacks of divisors with simplicial singularities}
 \label{reducetoncviacan}

Let $X$ be a log scheme with a \emph{simplicial} log structure. This means that it is fine and saturated, and the stalks of the sheaf $\overline{M}=M/\cO_X^\times$ are simplicial monoids. Recall that a sharp fine saturated monoid $P$ is simplicial if the extremal rays of the rational cone $P_\bQ\subset P^\gp\otimes_\bZ \bQ$ are linearly independent.

\begin{proposition}
Let $X$ be a log scheme with simplicial log structure. Then there is a canonical minimal Kummer extension $\overline{M}\subseteq \cF$ where $\cF$ is a coherent sheaf of monoids on $X$ with free stalks.
\end{proposition}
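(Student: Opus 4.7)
The plan is to construct $\cF$ stalkwise through a canonical monoid-theoretic operation, and then promote the stalkwise recipe to a genuine sheaf by checking functoriality under the cospecialization maps of $\overline M$.

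First, on the monoid level, let $P$ be a sharp fine saturated simplicial monoid of rank $n$, so that $P_{\bQ}$ has exactly $n$ linearly independent extremal rays $\rho_1,\ldots,\rho_n$. For each $i$ the intersection $\rho_i\cap P$ is isomorphic to $\bN$ by sharpness, and I let $v_i$ denote its generator; saturation of $P$ implies that $v_i$ is also the primitive generator of $\rho_i\cap P^{\gp}$. Since the $v_i$ form a $\bQ$-basis of $P^{\gp}\otimes\bQ$, every $p\in P$ has a unique expression $p=\sum_iq_i(p)\, v_i$ with $q_i(p)\in\bQ_{\geq 0}$. The quotient $P^{\gp}/\bigoplus_i\bZ v_i$ is finite, so for each $i$ the integer $d_i:=\operatorname{lcm}\{\text{denominator of }q_i(p)\mid p\in P\}$ is well defined. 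I then set
\[
F(P):=\bigoplus_{i=1}^n\bN\cdot\frac{v_i}{d_i}\subseteq P^{\gp}\otimes\bQ.
\]
By construction $F(P)$ is free, it contains $P$, and the extension $P\subseteq F(P)$ is Kummer since $d_i\cdot(v_i/d_i)=v_i\in P$.

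Next, I would establish minimality, and use it to extract the canonicity needed for sheafification. Any Kummer extension $P\subseteq F'$ with $F'$ free has the same rationalization as $P$, so its $n$ free generators lie on the extremal rays of $P$ and are of the form $v_i/d_i'$; the condition $P\subseteq F'$ forces $d_i\mid d_i'$, proving $F(P)\hookrightarrow F'$. The operation $P\mapsto F(P)$ is therefore the unique minimal free Kummer extension, and is canonically functorial for the maps relevant here, namely the \emph{face inclusions} $P\hookrightarrow Q$ of simplicial sharp fs monoids: such inclusions identify the rays of $P$ with a subset of those of $Q$, preserve primitive generators, and the inclusion $P\subseteq Q$ forces $d_i^P\mid d_i^Q$. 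Hence $v_i/d_i^P=(d_i^Q/d_i^P)(v_i/d_i^Q)\in F(Q)$ defines a canonical face inclusion $F(P)\hookrightarrow F(Q)$ compatible with $P\hookrightarrow Q$.

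To globalize, I work \'etale-locally. Since $X$ is fs with simplicial log structure, $\overline M$ admits \'etale-local charts by constant sheaves $\underline P$ with $P$ sharp fs simplicial, and the cospecialization maps between stalks of $\overline M$ are face inclusions of such monoids. On a chart I define $\cF$ by the constant sheaf $\underline{F(P)}$, with the inclusion $\overline M\hookrightarrow\cF$ coming from $P\hookrightarrow F(P)$. The functoriality established above shows that this local definition does not depend on the choice of chart and that the local pieces glue along overlaps, producing a coherent sheaf of monoids $\cF$ on $X$ with free stalks $\cF_x=F(\overline M_x)$ and a Kummer inclusion $\overline M\subseteq\cF$. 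Stalkwise minimality upgrades to the minimality of $\cF$ as a sheaf.

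The main obstacle is the sheafification step: one only has, a priori, the stalkwise recipe $x\mapsto F(\overline M_x)$, and must upgrade it to a sheaf. What makes the upgrade go through is the combination of (i) functoriality of $F(-)$ under face inclusions, which matches the structure of the cospecialization maps of the constructible sheaf $\overline M$, and (ii) the canonicity supplied by the minimality characterization, which rigidifies the gluing data between local charts. With these two ingredients in place, the remaining properties asserted in the proposition — coherence, freeness of the stalks, and the Kummer condition — follow immediately from the monoid-level statements.
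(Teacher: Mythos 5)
Your local monoid-level construction of $F(P)$ agrees with the paper's (the lcm you take over all $p\in P$ coincides with the lcm the paper takes over the non-extremal indecomposable generators, since both are controlled by the finite quotient $P^\gp/\bigoplus_i\bZ v_i$), and your minimality argument is correct and is essentially the one the paper alludes to. The problem is the globalization step.

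You define $\cF$ on a chart as the constant sheaf $\underline{F(P)}$ and assert both that $\overline M\hookrightarrow\cF$ is induced by $P\hookrightarrow F(P)$ and that the resulting stalks are $\cF_x=F(\overline M_x)$. These two claims contradict each other and neither is correct. A chart $\underline P\to\overline M$ exhibits $\overline M$ as a quotient of $\underline P$ (the cokernel in the sense of \cite[Section 3.3]{borne-vistoli}), not as a subsheaf, so at a point $x$ lying in a deeper stratum the stalk $\overline M_x$ is $P/F_x$ for some nontrivial face $F_x$; composing the surjection $\underline P\twoheadrightarrow\overline M$ with the inclusion $\underline P\hookrightarrow\underline{F(P)}$ does not produce a map $\overline M\to\underline{F(P)}$. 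Moreover the stalk of the constant sheaf $\underline{F(P)}$ at $x$ is $F(P)$, not the typically smaller monoid $F(P/F_x)$, so the sheaf you describe is neither a Kummer extension of $\overline M$ nor minimal. You also have the direction of the generization maps of $\overline M$ backwards: for $y$ a specialization of $x$ the natural map goes $\overline M_{y}\to\overline M_{x}$ and is a quotient by a face, not a face inclusion, so functoriality of $F(-)$ under face inclusions is not what the gluing needs. The paper circumvents all of this by defining $\cF$ not as a constant sheaf but as the \emph{image} of the subsheaf $\underline{\bN^n}\subset\underline{P_\bQ}$ in $\overline M_\bQ$, so that $\cF$ automatically inherits the correct quotient-by-a-face structure at each stalk; one then still has to verify (as the paper asserts without full detail) that these stalks coincide with the minimal free Kummer extension of $\overline M_x$, which is a genuine compatibility of the construction of $F(-)$ with face quotients.
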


The minimality in the statement means that every Kummer extension $\overline{M}\to N$ where $N$ is coherent with free stalks factors uniquely as $\overline{M}\to\ \cF\to N$.

\begin{proof}
Because of the uniqueness part of the statement, it suffices to do the construction locally. Assume therefore that we have a global chart $X\to [\Spec \kappa[P]/D(P^\gp)]$ for $X$ (in the sense of \cite[Section 3.3]{borne-vistoli}), where $P$ is a simplicial monoid, and $D(P^\gp)$ denotes the Cartier dual $\Hom(P^\gp, \bG_m)$ of $P^\gp$.

Let $p_1,\hdots, p_n$ be the primitive generators in the lattice $P^\gp$ of the extremal rays of the rational cone $P_\bQ\subset P^\gp\otimes_\bZ\bQ$ generated by $P$. These are indecomposable elements of $P$. Let $q_1,\hdots, q_m$ be the remaining indecomposable elements of $P$.
By simpliciality of $P$, for every index $i=1,\hdots, m$ we can express $q_i$ uniquely as a rational linear combination of the $p_j$. Let us write
$
q_i=\sum_{j=1}^n \frac{a_{ij}}{b_{ij}}\cdot p_j
$
where for every pair of indices $\{i,j\}$, $a_{ij}$ and $b_{ij}$ are coprime non-negative integers. For every $j=1,\hdots, n$ let $c_j$ be the lcm of the set $\{b_{1j},\hdots, b_{mj}\}$. Consider then the submonoid of $P^\gp\otimes_\bZ\bQ$ generated by the vectors $p_1/c_1,\hdots, p_n/c_n\in P_\bQ$. This is a free monoid $\bN^n$ containing $P$, and the inclusion $P\to \bN^n$ is a Kummer morphism. It is easy to check that it is minimal among Kummer morphisms from $P$ to a free monoid.

Now consider the map $P\to \overline{M}(X)$ corresponding to the chart for the log structure of $X$ that we fixed above. Recall from \cite[Section 3.3]{borne-vistoli} that this map being a chart exactly means that the induced morphism $\phi\colon \underline{P}\to \overline{M}$ from the constant sheaf $\underline{P}$ is a cokernel, i.e. it induces an isomorphism $\underline{P}/\ker \phi\cong \overline{M}$, where $\ker \phi$ denotes the preimage of the zero section (this is not always true in the category of monoids). In the same way, the map $P_\bQ\to \overline{M}(X)_\bQ$ gives a chart for the sheaf $\overline{M}_\bQ$ over $X$.
For the Kummer extension $P\subseteq \bN^n\subset P_\bQ$ constructed above, let us consider the image $\cF$ of the subsheaf $\underline{\bN^n}\subset \underline{P_\bQ}$ in $\overline{M}_\bQ$. It is not hard to check that the natural map $\bN^n\to \cF(X)$ is a chart for $\cF$ (i.e. the corresponding $\underline{\bN^n}\to \cF$ is a cokernel), and that the induced morphism $\overline{M}\to \cF$ is a Kummer extension with the universal property of the statement.
\end{proof}
 
\begin{remark}
The previous proposition is also true for log algebraic stacks with simplicial log structure, by passing to a smooth presentation and using uniqueness of the Kummer extension to produce descent data.
\end{remark}

\begin{definition}
\label{canonicalstack}
Let $X$ be a log algebraic stack with simplicial log structure, and let $\overline{M}\to \cF$ be the canonical Kummer extension constructed above. We call the root stack $\radice[\cF]{X}$ the \emph{canonical root stack} of $X$.
\end{definition} 
 
\begin{remark}
Assume that $\kappa$ is a field of characteristic $0$. If $X=\Spec \kappa[P]$ where $P$ is a simplicial monoid, then we can consider the minimal Kummer extension $P\subseteq \bN^n$ constructed in the proof of the previous proposition. The canonical root stack in this case is the quotient $[\Spec \kappa[\bN^n]/D(\bZ^n/P^\gp)]$. Note that the quotient $\bZ^n/P^\gp$ is a finite group, so this quotient is a smooth Deligne--Mumford stack. In fact, in this case it coincides with the \emph{canonical stack} of Fantechi--Mann--Nironi \cite{fantechi2010smooth}, which justifies its name.
\end{remark}

Note that the natural log structure of the canonical root stack $\radice[\cF]{X}$ is locally free by construction. 

\begin{definition}\label{def:simpl.sing}
Let $X$ be a scheme over $\kappa$, and $D\subset X$ be an effective Cartier divisor. We say that $D$ has  \emph{simple simplicial singularities} if:
\begin{itemize}
\item the compactifying log structure $M_D$ (whose definition is recalled in Section \ref{sec:logstr}) is simplicial, 
\item the tautological log structure of the canonical root stack $X'=\radice[\cF]{(X,D)}$ is given by a  simple  normal crossings divisor $D'\subset X'$ (in the sense of Definition \ref{def:nc}).
\end{itemize}
\end{definition}

\begin{remark}
In Definition \ref{def:simpl.sing} we have made the assumption that $D' \subset X'$ is a \emph{simple} normal crossing divisor in order to simplify the exposition. However, in characteristic $0$, the results from this section could be formulated more generally for the case when $D'$ is a general normal crossing divisor. We leave it to the interested reader 
to recast Theorem \ref{maininfpsodsimpli} below in this greater generality using as input the psod constructed in the general normal crossing setting in Theorem \ref{mainncdivsod}. 

In the rest of the paper, for convenience we will abbreviate ``simple simplicial singularieties'' by just ``simplicial singularities''.
\end{remark}

If $X$ is an algebraic stack and $D\subseteq X$ is an effective Cartier divisor, we say that $D$ has simplicial singularities if the pull-back of $D$ to some smooth presentation $U\to X$, where $U$ is a scheme, has simplicial singularities in the sense of the previous definition.

\begin{remark}
Assume that 
$D$ is an effective Cartier divisor on $X$ and that for every $x \in D$ the pair $(X,D)$ is \'etale locally around $x$ isomorphic to the pair $(\Spec \kappa[P] \times \bA^n,\Delta_P\times \bA^n)$ for some simplicial monoid $P$ and $n\in \bN$. Then $D$ has simplicial singularities in the sense Definition \ref{def:simpl.sing}.
\end{remark}

Now assume that $D\subset X$ has simplicial singularities, and consider the canonical root stack $(X',D')=\radice[\cF]{(X,D)}$. Since $(X',D')\to (X,D)$ is a root stack morphism we have a canonical isomorphism $\radice[\infty]{(X',D')}\simeq \radice[\infty]{(X,D)}$ and therefore in order to study the category of perfect complexes on $\radice[\infty]{(X,D)}$, we can pass to $(X',D').$ For future reference we state this as the following theorem.
We will use it in Section \ref{sec:simplicial.applications} to obtain a formula for the Kummer flat K-theory of $X$.

Let $I$ be the set of irreducible components of $D'$ and let $S_{I'}$ be the preorder of strata of $(X',D')$. 

\begin{theorem}
\label{maininfpsodsimpli}
 \mbox{ }
\begin{enumerate}[leftmargin=*] 
\item The category  $\Perf(\radice[\infty]{(X,D)})$ has a collection of subcategories $\cA_{J'}^!$,  $J' \in S_{I'}$, such that: 
\begin{itemize}[leftmargin=*]
\item For all $J'$, the subcategory $\cA_{J'}^!$ is admissible. 
\item  For all $J' \in S_{I'}^*$ the category $\cA_{J'}^!$  has a psod of type $((\mathbb{Q}/\mathbb{Z})_{J'}^*, \leq^!)$ 
$$
\cA_{J'}^! = \langle  \cA_{ \chi}^{J',!}, \chi \in ((\mathbb{Q}/\mathbb{Z})_{J'}^*, \leq^!)  \rangle,
$$ 
and, for all $\chi \in (\mathbb{Q}/\mathbb{Z})_{J'}^*$, there is an equivalence $\cA_{ \chi}^{J',!} \simeq \Perf(D_{J'})$.
\end{itemize}
\item  The  category $\Perf(\radice[\infty]{(X,D)})$ has a psod of type $((\mathbb{Q}/\mathbb{Z})_{I'}, \leq^!)$ 
$$
\Perf(\radice[\infty]{(X,D)})=  \langle \cA_\chi^{J',!}, \chi \in  ((\mathbb{Q}/\mathbb{Z})_{I'}, \leq^!)  \rangle,
$$
and for all $\chi \in (\mathbb{Q}/\mathbb{Z})_{J'}^ *$ there is an equivalence $\cA_{ \chi}^{J',!} \simeq \Perf(D_{J'})$.
\end{enumerate}
\end{theorem}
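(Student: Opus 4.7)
The plan is to reduce the statement immediately to Theorem \ref{mainsgncdiv} applied to the canonical root stack $(X', D') = \radice[\cF]{(X,D)}$. By the hypothesis of simplicial singularities (Definition \ref{def:simpl.sing}), the divisor $D' \subset X'$ is simple normal crossings, with strata indexed by $S_{I'}$. Thus Theorem \ref{mainsgncdiv} already produces on $\Perf(\radice[\infty]{(X',D')})$ a psod of type $((\mathbb{Q}/\mathbb{Z})_{I'}, \leq^!)$ whose factors $\cA_\chi^{J',!}$ are equivalent to $\Perf(D_{J'})$, together with the intermediate admissible subcategories $\cA_{J'}^!$ and their internal psod-s of type $((\mathbb{Q}/\mathbb{Z})_{J'}^*, \leq^!)$ --- exactly the data asserted in the statement.

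The crux of the proof will therefore be establishing a canonical equivalence
\[
\radice[\infty]{(X',D')} \simeq \radice[\infty]{(X,D)}
\]
so that the psod transports from the canonical root stack back to the original log stack. Since $(X',D') \to (X,D)$ is itself a root stack morphism along the finite Kummer extension $\overline{M} \to \cF$, the iterated root stacks of $(X',D')$ correspond to Kummer extensions of $\overline{M}$ which factor through $\cF$. The equivalence follows from a cofinality argument at the level of indexing monoids: every Kummer extension $\overline{M} \to B$ admits a common Kummer refinement with $\cF$, so the two projective systems defining $\radice[\infty]{(X,D)}$ and $\radice[\infty]{(X',D')}$ share cofinal subfamilies. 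Invoking Proposition \ref{prop.db.irs} then upgrades this to an equivalence on categories of perfect complexes.

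Granted these two ingredients, the proof is formal: apply Theorem \ref{mainsgncdiv} to $(X',D')$ and transport the admissible subcategories $\cA_{J'}^!$ and the psod factors $\cA_\chi^{J',!}$ along the equivalence above, preserving the orderings and the identifications with $\Perf(D_{J'})$. The main (and only real) obstacle will be making the cofinality assertion for Kummer extensions precise, but this is a purely combinatorial statement about sharp fs monoids and should be essentially immediate from the universal property of the infinite root stack.
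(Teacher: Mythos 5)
Your proposal matches the paper's argument essentially verbatim: the paper observes that $(X',D')\to(X,D)$ is a root stack morphism, deduces the canonical isomorphism $\radice[\infty]{(X',D')}\simeq\radice[\infty]{(X,D)}$, and then applies Theorem \ref{mainsgncdiv} to the simple normal crossings pair $(X',D')$. Your cofinality remark about Kummer extensions is a correct unpacking of why the infinite root stacks agree (the paper simply asserts the isomorphism, which is standard from \cite{TV}); one small shortcut worth noting is that since $\Perf$ of an inverse limit is \emph{defined} in Definition \ref{properf} as a filtered colimit along the pro-system, cofinality already gives the equivalence of perfect complex categories directly, without any further appeal to Proposition \ref{prop.db.irs}.
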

\begin{proof}
This follows immediately from  the equivalence
$
\Perf(\radice[\infty]{(X,D)}) \simeq \Perf(\radice[\infty]{(X',D')})
$
and the fact that by 
Theorem \ref{mainsgncdiv} we can equip $\Perf(\radice[\infty]{(X',D')})$ with a psod of type $S_{I'}.$   
\end{proof}

\begin{remark}
It would be very interesting to extend Theorem \ref{maininfpsodsimpli} to the general log smooth setting, however new ideas would be required. A promising avenue for further investigation might come from work of Satriano \cite{satriano2013canonical}, which essentially extends the theory of canonical root stacks given in Definition \ref{canonicalstack} to general (non-necessarily simplicial) toric singularities. 
\end{remark}

\section{Non-commutative motives of log schemes}
\label{ncmols}
In this section we  associate to log stacks  objects in the category of non-commutative motives. We start by giving a brief summary of the theory which  follows closely the treatment given in \cite[Section 5]{HSS}. The reader can find  accounts of  the theory of non-commutative motives  in \cite{BGT} and \cite{HSS}. 

Let $\cT_\infty$ be the $\infty$-category of spaces, which is the homotopy coherent nerve of the simplicial category of Kan complexes. Let $\cS_\infty$ be the $\infty$-category of spectra. The  
category $\cS_\infty$ is the stabilization of $\cT_\infty,$ and we denote by 
$
\Sigma_+^\infty\colon \cT_\infty \rightarrow \cS_\infty
$
the stabilization functor. 
\begin{definition}
  Let $\cC$ be a small $\infty$-category.  We denote:
 \begin{itemize}[leftmargin=*]
 \item by $\mathrm{PSh}(\cC)=\mathrm{Fun}(\cC^{\mathrm{op}}, \cT_\infty)$ the $\infty$-category of presheaves of $\infty$-groupoids over $\cC$,
 \item by $\mathrm{PSh}_{\cS_\infty}(\cC)= \mathrm{Fun}(\cC^{\mathrm{op}}, \cS_\infty)$ the $\infty$-category of presheaves of spectra over $\cC$,
 \item by $\Sigma_+^\infty\colon \mathrm{PSh}(\cC)  \longrightarrow \mathrm{PSh}_{\cS_\infty}(\cC)$ 
 the  functor given, on objects, by stabilization.
 \end{itemize}
\end{definition}
Let 
$(\mathrm{Cat}_{\infty, \kappa}^{\mathrm{perf}})^\omega$ be the subcategory of compact objects in $\mathrm{Cat}_{\infty, \kappa}^{\mathrm{perf}}.$ Let $\phi$ be  the composite  
$$
\phi\colon \mathrm{Cat}_{\infty, \kappa}^{\mathrm{perf}} \longrightarrow   \mathrm{PSh}((\mathrm{Cat}_{\infty, \kappa}^{\mathrm{perf}})^\omega) \xrightarrow{\Sigma_+^\infty} 
\mathrm{PSh}_{\cS_\infty}((\mathrm{Cat}_{\infty, \kappa}^{\mathrm{perf}})^\omega),
$$
where the first arrow is the restriction of  the Yoneda to the subcategory $(\mathrm{Cat}_{\infty, \kappa}^{\mathrm{perf}})^\omega.$ 

\begin{definition}
\label{defaddmot}
The \emph{category of additive motives} $\mathrm{Mot}^{\mathrm{add}}$ is the localization 
of $\mathrm{PSh}_{\cS_\infty}((\mathrm{Cat}_{\infty, \kappa}^{\mathrm{perf}})^\omega)$ at the class of morphisms 
$
\phi(\cB) / \phi(\cA) \rightarrow \phi(\cC)
$
which are induced by split exact sequences 
$ \,   \cA \to \cB \to \cC   \,$    in $\mathrm{Cat}_{\infty, \kappa}^{\mathrm{perf}}.$ 
\end{definition}

Let $\cU$ be the composite
$ \, \mathrm{Cat}_{\infty, \kappa}^{\mathrm{perf}} \stackrel{\phi} \rightarrow 
\mathrm{PSh}_{\cS_\infty}((\mathrm{Cat}_{\infty, \kappa}^{\mathrm{perf}})^\omega)  \rightarrow \mathrm{Mot}^{\mathrm{add}} 
\, $,  
where the second arrow is given by the localization functor. An \emph{additive invariant}  
is a functor  $\mathrm{H}\colon\mathrm{Cat}_{\infty, \kappa}^{\mathrm{perf}} \to \cP$, where $\cP$ is a stable presentable $\infty$-category, that preserves zero objects and filtered colimits, and
that maps split exact sequences to cofiber sequences. The functor $\cU$ is the universal additive invariant. We formulate the precise statement below. 

\begin{proposition}[Theorem 5.12 \cite{HSS}] 
Let $\cP$ be a presentable and stable $\infty$-category, and let 
$
\mathrm{H}\colon \mathrm{Cat}_{\infty, \kappa}^{\mathrm{perf}} \longrightarrow \cP
$
be an additive invariant. Then $\mathrm{H}$  factors uniquely as a composition $$
 \xymatrix{
\mathrm{Cat}_{\infty, \kappa}^{\mathrm{perf}} \ar[r]^-{\mathrm{H}} \ar[d]_-{\cU} & \cP \\ 
 \mathrm{Mot}^{\mathrm{add}} \ar[ur]_-{\mathrm{\overline H}}
 }
 $$
where $\mathrm{\overline H}$ is a colimit-preserving functor of presentable categories.   
\end{proposition}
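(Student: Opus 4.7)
The plan is to assemble the factorization from two classical universal properties: Yoneda extension into a presheaf category, followed by a Bousfield-type localization. First I would pass from $\mathrm{H}$ to an auxiliary colimit-preserving functor out of the stabilized presheaf category $\mathrm{PSh}_{\cS_\infty}((\mathrm{Cat}_{\infty, \kappa}^{\mathrm{perf}})^\omega)$. Concretely, since $\mathcal{P}$ is presentable and stable and $\mathrm{H}$ commutes with filtered colimits, the restriction $\mathrm{H}|_{(\mathrm{Cat}_{\infty, \kappa}^{\mathrm{perf}})^\omega}$ extends uniquely along the stabilized Yoneda embedding $\phi$ to a colimit-preserving functor $\widetilde{\mathrm{H}}\colon \mathrm{PSh}_{\cS_\infty}((\mathrm{Cat}_{\infty, \kappa}^{\mathrm{perf}})^\omega) \to \mathcal{P}$ (this is the composition of the usual Yoneda extension for presheaves of $\infty$-groupoids with the fact that $\Sigma^{\infty}_{+}$ is left adjoint to the forgetful functor from spectral presheaves, together with the universal property of $\cS_\infty$ as the stabilization of $\cT_\infty$). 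The equality $\widetilde{\mathrm{H}}\circ \phi \simeq \mathrm{H}$ on all of $\mathrm{Cat}_{\infty, \kappa}^{\mathrm{perf}}$ then follows by writing an arbitrary $\cC \in \mathrm{Cat}_{\infty, \kappa}^{\mathrm{perf}}$ as a filtered colimit of its compact subobjects and using that both sides preserve such colimits.

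Next I would show that $\widetilde{\mathrm{H}}$ inverts the class $W$ of morphisms $\phi(\cB)/\phi(\cA) \to \phi(\cC)$ coming from split exact sequences $\cA \to \cB \to \cC$. This is exactly the additivity hypothesis on $\mathrm{H}$: since $\widetilde{\mathrm{H}}$ is exact (being colimit-preserving between stable presentable categories), it sends $\phi(\cB)/\phi(\cA)$ to the cofiber $\mathrm{H}(\cB)/\mathrm{H}(\cA)$, and the assumption that $\mathrm{H}$ takes split exact sequences to cofiber sequences forces the comparison map to $\mathrm{H}(\cC)$ to be an equivalence. By the universal property of the localization $\mathrm{PSh}_{\cS_\infty}((\mathrm{Cat}_{\infty, \kappa}^{\mathrm{perf}})^\omega) \to \mathrm{Mot}^{\mathrm{add}}$ in the $\infty$-category of presentable $\infty$-categories and colimit-preserving functors (see Lurie, \emph{Higher Topos Theory}, Proposition~5.5.4.20), $\widetilde{\mathrm{H}}$ descends uniquely to a colimit-preserving functor $\overline{\mathrm{H}}\colon \mathrm{Mot}^{\mathrm{add}} \to \mathcal{P}$ with $\overline{\mathrm{H}} \circ \cU \simeq \mathrm{H}$.

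Uniqueness of $\overline{\mathrm{H}}$ among colimit-preserving functors is then automatic from the two universal properties used: any such $\overline{\mathrm{H}}'$ pulls back to a colimit-preserving functor out of the presheaf category that agrees with $\mathrm{H}$ on the image of $\phi$, and both uniqueness clauses (Yoneda extension and localization) combine to give $\overline{\mathrm{H}}' \simeq \overline{\mathrm{H}}$.

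The only step requiring genuine care—and what I expect to be the main technical obstacle if one is being fully rigorous—is checking that the class $W$ of morphisms being inverted is of small generation and compatible with the presentable structure, so that the localization $\mathrm{Mot}^{\mathrm{add}}$ exists as a presentable stable $\infty$-category and the localization functor is itself colimit-preserving. This is where one invokes the fact that split exact sequences form a set (up to equivalence) of morphisms between compact objects, so that $W$ is generated under colimits by a small set and Lurie's criterion for accessible localization applies. Once this is in place, everything else is a formal consequence of the two universal properties.
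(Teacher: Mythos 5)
The paper gives no proof of this proposition: it is imported verbatim as Theorem 5.12 of [HSS], and the surrounding text merely fixes the notation ($\phi$, $\cU$, Definition \ref{defaddmot}) in which that theorem is phrased. So there is nothing in the paper itself to compare against; what you have written is a reconstruction of the argument in [HSS] (and its close cousin in [BGT]). Your outline is correct and follows the expected route: extend $\mathrm{H}$ along $\phi$ to a colimit-preserving functor $\widetilde{\mathrm{H}}$ out of the spectral presheaf category, observe that additivity of $\mathrm{H}$ forces $\widetilde{\mathrm{H}}$ to invert the localizing class $W$, then descend via the universal property of Bousfield localization, with uniqueness following from the uniqueness clauses of both universal properties invoked.

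Two remarks if you were to tighten this into a full proof. For the extension step it is cleaner to say directly that $\mathrm{PSh}_{\cS_\infty}(C)\simeq \mathrm{PSh}(C)\otimes \cS_\infty$ is the free stable presentable $\infty$-category on the small $\infty$-category $C$, so that $\mathrm{Fun}^{\mathrm{L}}(\mathrm{PSh}_{\cS_\infty}(C),\cP)\simeq \mathrm{Fun}(C,\cP)$ for any stable presentable $\cP$; your chain through $\Sigma^\infty_+$ and its forgetful right adjoint lands in the same place but less transparently. The small-generation concern you flag at the end is real and is the one nonformal step: Definition \ref{defaddmot} quantifies over \emph{all} split exact sequences in $\mathrm{Cat}_{\infty,\kappa}^{\mathrm{perf}}$, and $\phi(\cC)$ is not representable (hence not compact) when $\cC$ is not. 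The resolution, as in [BGT], is to note that every split exact sequence is a filtered colimit of split exact sequences of compact objects and that $\phi$ and cofibers commute with filtered colimits, so inverting $W$ is equivalent to inverting the \emph{set} of arrows coming from compact sequences; Lurie's criterion for accessible localization of presentable $\infty$-categories then applies and also guarantees that the localization functor preserves colimits, which you use implicitly in the uniqueness step.
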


\begin{remark}
\label{splitsplit}
Let $ 
\mathrm{H}\colon \mathrm{Cat}_{\infty, \kappa}^{\mathrm{perf}}  \to \cP
$ be an additive invariant. If 
$
\cA \stackrel{F} \longrightarrow 
\cB \stackrel{G} \longrightarrow \cC
$
is a split exact sequence in $\mathrm{Cat}_{\infty, \kappa}^{\mathrm{perf}},$ then there is a canonical splitting  
$ \, 
\mathrm{H}(\cB) \simeq 
\mathrm{H}(\cA) \oplus \mathrm{H}(\cC) \, . 
$ 
Indeed, let $(F)^R$  be the right adjoint  of $F.$ Since $\mathrm{H}$ is additive 
\begin{equation}
\label{eqeqsplitsplit}
\mathrm{H}(\cA) \stackrel{\mathrm{H}(F)} \longrightarrow 
\mathrm{H}(\cB) \stackrel{\mathrm{H}(G)} \longrightarrow \mathrm{H}(\cC)
\end{equation}
is a fiber sequence in $\cP.$ Further 
$\mathrm{H}((F)^R)$ is a section of $\mathrm{H}(F).$ Thus  (\ref{eqeqsplitsplit}) splits, and  $\mathrm{H}(\cB)$ decomposes as the direct sum of 
 $\mathrm{H}(\cA)$ and $\mathrm{H}(\cC).$
\end{remark}

\begin{lemma}
\label{motsplitlem}
Let $\cC= \langle  \cC_x,  x \in P \rangle$ be a stable $\infty$-category equipped with a psod of type 
$P,$ and assume that $P$ is finite and directed (i.e. it admits an order-reflecting map to the natural numbers $\bN$, ordered in the standard manner). Then there is an equivalence 
$
\cU(\cC) \simeq \bigoplus_{x \in P }\cU(\cC_x).
$ 
\end{lemma}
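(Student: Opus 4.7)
The plan is to induct on $|P|$, using the psod to peel off one admissible factor at a time and to invoke the additivity of $\cU$ recorded in Remark \ref{splitsplit}. The base case $|P|\leq 1$ is immediate, so assume $|P|\geq 2$. Since $P$ admits an order-reflecting map $f\colon P\to\bN$, it is automatically totally preordered, so in particular it contains a minimum element $x_{0}$. The defining psod orthogonality ``$y<_{P}x\Rightarrow \cC_{y}\subseteq \cC_{x}^{\bot}$'' then yields $\cC_{x_{0}}\subseteq \cC_{x}^{\bot}$ for every $x\in P':=P\setminus\{x_{0}\}$; writing $\cC':=\langle \cC_{x},\ x\in P'\rangle$, these pointwise orthogonalities upgrade to $\cC_{x_{0}}\subseteq (\cC')^{\bot}$ because the right orthogonal is closed under fibers, cofibers and retracts. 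This exhibits $\cC$ as a length-two psod $\cC=\langle \cC_{x_{0}},\cC'\rangle$ with both factors admissible (the first by hypothesis, the second automatically as the SOD-complement of an admissible subcategory).

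By the standard correspondence between length-two psod-s with admissible factors and split exact sequences recalled in Section \ref{esoinfca}, this length-two psod translates into a split exact sequence
$$
\cC'\longrightarrow \cC\longrightarrow \cC_{x_{0}}
$$
in $\mathrm{Cat}^{\mathrm{perf}}_{\infty,\kappa}$. A short verification then shows that $\cC'$ itself carries a psod of type $P'$ with factors the original $\cC_{x}$, $x\in P'$: admissibility of each $\cC_{x}$ inside $\cC'$ follows by restricting the adjoints of $\cC_{x}\hookrightarrow \cC$, generation is by construction, and the orthogonality conditions are inherited from those on $\cC$. Since $P'$ is finite and still directed via the restriction of $f$, the inductive hypothesis applies and supplies $\cU(\cC')\simeq \bigoplus_{x\in P'}\cU(\cC_{x})$.

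Applying $\cU$ to the split exact sequence above and invoking Remark \ref{splitsplit} produces the canonical splitting $\cU(\cC)\simeq \cU(\cC_{x_{0}})\oplus \cU(\cC')$; combining this with the inductive decomposition of $\cU(\cC')$ yields $\cU(\cC)\simeq \bigoplus_{x\in P}\cU(\cC_{x})$, as required. The only non-formal ingredient is the promotion of a length-two psod with admissible factors to a split exact sequence, which is the main technical point to be careful about, but it is a standard output of the semi-orthogonal decomposition formalism in the $\infty$-categorical setting.
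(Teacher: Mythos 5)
Your proof is correct and follows essentially the same route as the paper's: use the order-reflecting map to $\bN$ to realize the psod as a finite iterated (totally ordered) sod, then split off the factors one at a time via Remark \ref{splitsplit}. The paper simply chooses a numbering $p_0,\dots,p_m$ upfront and appeals to Remark \ref{splitsplit} in one stroke, whereas you phrase the same peeling-off as an explicit induction on $|P|$ and spell out the intermediate verifications (that $\cC_{x_0}\subseteq(\cC')^\bot$, that $\cC'$ inherits a psod of type $P'$, and the sod-to-split-exact-sequence translation); one small wording slip is that the closure property you invoke is really that the \emph{left} orthogonal of $\cC_{x_0}$ is a thick subcategory of $\cC$, which is why it contains the stable subcategory $\cC'$ generated by the $\cC_x$, $x\neq x_0$.
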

\begin{proof}
Note that if $P$ is directed we can choose a numbering 
$
\{p_0, \ldots, p_m \}
$
of its elements with the property that, if $i < j,$ then $\cC_{p_i} \subseteq \cC_{p_j}^\bot.$  Thus we can write down a 
sod $\langle \cC_{p_0}, \ldots, \cC_{p_m} \rangle$ for $\cC$.
Then the second statement is a simple consequence of  Remark \ref{splitsplit}.  
\end{proof}

\subsection{The non-commutative motive of a log stack}
We introduce the following notations.
\begin{itemize}
\item If $X$ is a stack we set 
$\cU(X):=\cU(\Perf(X))$.  
\item If $X$ is a log algebraic stack, we denote by
$ X_{\mathrm{Kfl}}$ the ringed Kummer flat topos  over $X$. We set 
$\cU(X_{\mathrm{Kfl}}):=\cU(\Perf(X_{\mathrm{Kfl}}))$.
\end{itemize}
We will apply Lemma \ref{motsplitlem} to the psod-s we constructed in sections \ref{sncd} and \ref{ncd}.     
Let $(X, D)$ be a log stack given by an algebraic stack $X$ equipped with   a  normal crossings divisor 
$D.$ Let $S_I$ be the preorder of strata of $(X,D)$. In the  statement below we  use the same notations as  in 
Section \ref{sncd}.

\begin{corollary}
\label{ncmotdirectsum}
Let $(X, D)$ be a log stack given by an algebraic stack $X$ equipped with a simple normal crossings divisor 
$D.$ 
Then there is an equivalence $$
\cU((X,D)_{\mathrm{Kfl}}) \simeq \cU(X) \bigoplus \Big ( \bigoplus_{S \in S_I^*} \Big ( \bigoplus_{\chi \in (\mathbb{Q}/\mathbb{Z})_I^*} \cU(S) \Big ) \Big ). 
$$ 
\end{corollary}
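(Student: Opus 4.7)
The plan is to string together three results: Proposition \ref{kflinf} to replace the Kummer flat topos by the infinite root stack, Proposition \ref{prop.db.irs} to express perfect complexes on the infinite root stack as a filtered colimit along the cofinal tower $\{\radice[\vec n!]{(X,D)}\}_{n\in\bN}$, and Proposition \ref{sod!snc} plus Lemma \ref{motsplitlem} to split the motive at each finite level. Concretely, the first step gives $\cU((X,D)_{\mathrm{Kfl}})\simeq \cU(\radice[\infty]{(X,D)})$, which is legitimate because the log structure coming from a simple normal crossings divisor is locally free, satisfying the hypotheses of Proposition \ref{kflinf}.

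Next I would use that the universal additive invariant $\cU$ preserves filtered colimits, so that
$$
\cU(\radice[\infty]{(X,D)}) \simeq \varinjlim_{n} \cU(\radice[\vec n!]{(X,D)}).
$$
For each fixed $n$, Proposition \ref{sod!snc}(2) equips $\Perf(\radice[\vec n!]{(X,D)})$ with a psod of type $(\mathbb{Z}_{I,\vec n!},\leq^!)$, which is totally ordered on a finite set and thus directed. Hence Lemma \ref{motsplitlem} yields
$$
\cU(\radice[\vec n!]{(X,D)}) \simeq \cU(X) \oplus \bigoplus_{J\in S_I^*}\bigoplus_{\chi\in \mathbb{Z}_{J,\vec n!}^*} \cU(D_J),
$$
where the identifications $\cA_{\vec 0}^!\simeq \Perf(X)$ and $\cA_\chi^{J,!}\simeq \Perf(D_J)$ come from Proposition \ref{sod!snc}(2) and the decomposition $\mathbb{Z}_{I,\vec n!}=\coprod_{J\in S_I}\mathbb{Z}_{J,\vec n!}^*$ of Lemma \ref{lemimm}.

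Finally, I would pass to the filtered colimit. Part (3) of Proposition \ref{sod!snc} says that the pullback functors along $g_{\overrightarrow{(n+1)}!,\overrightarrow{n}!}$ are compatible with the psod-s, so under $\cU$ they are induced on summands by the inclusion of index sets $\mathbb{Z}_{I,\vec n!}\hookrightarrow \mathbb{Z}_{I,\overrightarrow{(n+1)}!}$. Because filtered colimits commute with direct sums in the stable presentable category $\mathrm{Mot}^{\mathrm{add}}$, the colimit of these finite decompositions is
$$
\cU(X) \oplus \bigoplus_{J\in S_I^*}\bigoplus_{\chi\in (\mathbb{Q}/\mathbb{Z})_J^*} \cU(D_J),
$$
using $\bigcup_n \mathbb{Z}_{J,\vec n!}^* = (\mathbb{Q}/\mathbb{Z})_J^*$. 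Identifying each $D_J$ with the corresponding stratum $S\in S_I^*$ gives the stated formula.

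The only subtle point is the bookkeeping in the passage to the colimit: one has to verify that the compatibility of the psod-s along root maps upgrades to the statement that, under the colimit, the summand indexed by $\chi\in\mathbb{Z}_{J,\vec n!}^*$ maps identically (via the equivalence $\cA_\chi^{J,!}\simeq\Perf(D_J)$) to the corresponding summand at level $n+1$. This is essentially built into the recursive construction in the proof of Proposition \ref{sod!snc}, so the main obstacle is unwinding the definitions of the embeddings $\Phi_{J,\chi}$ on nested levels and checking that they agree under the identification $\mathbb{Z}_{J,\vec n!}^* \subset \mathbb{Z}_{J,\overrightarrow{(n+1)}!}^*$. Once this is verified, everything reduces to the formal observation that $\cU$ commutes with filtered colimits and finite direct sums.
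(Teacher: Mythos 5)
Your proof is correct and mirrors the paper's own argument step for step: replace the Kummer flat topos by the infinite root stack via Proposition \ref{kflinf}, split the motive at each finite factorial root level via Proposition \ref{sod!snc} and Lemma \ref{motsplitlem}, and pass to the filtered colimit using that $\cU$ preserves filtered colimits and that the psod-s are compatible along the root maps. The only cosmetic difference is that you spell out the intermediate finite-level decomposition and the index-set bookkeeping more explicitly than the paper's four-sentence proof does.
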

\begin{proof}
Using Lemma \ref{kflinf}  we obtain  an equivalence
$
\Perf((X,D)_{\mathrm{Kfl}}) \simeq \Perf(\radice[\infty]{(X,D)})
$.
The non-commutative motive of  $\Perf(\radice[n!]{(X,D)})$ has a decomposition as in the statement (except the indexing set $(\mathbb{Q}/\mathbb{Z})_I^*$ has to be replaced by $\mathbb{Z}_{I, n!}^*$): this follows from   Proposition \ref{sod!snc} and 
Lemma \ref{motsplitlem}. The category $\Perf(\radice[\infty]{(X,D)})$ is a filtered colimit of the categories $\Perf(\radice[n!]{(X,D)})$. Also, by Theorem \ref{mainsgncdiv}, it carries a psod that is the colimit of the psod-s of the categories 
$\Perf(\radice[n!]{(X,D)}).$ The statement follows because, by construction, $\cU(-)$ commutes with filtered colimits.
\end{proof}

Formulas exactly paralleling Corollary \ref{ncmotdirectsum}  can be obtained in the general normal crossing setting. 
This is straightforward, as explained in Section \ref{ncd}, but involves  messy combinatorics.  
For this reason we give instead a simplified statement, which is contained in   Corollary \ref{ncmotdirectsumII} below. 
 
In the following statement, if $S$ is a stratum of $(X,D)$, we denote by $S^\vee$ its normalization. 
\begin{corollary}
\label{ncmotdirectsumII}
Assume that the ground ring $\kappa$ is a field of characteristic $0$. Let $(X, D)$ be a log stack given by an algebraic stack $X$ equipped with a normal crossings divisor 
$D$.  
Then for each $S \in S_D^*$ there exists an infinite countable set $\cI_S$, such that there is an equivalence 
 $$ 
\cU((X,D)_{\mathrm{Kfl}}) \simeq \cU(X) \bigoplus \Big ( \bigoplus_{S \in S_D^*} \Big ( \bigoplus_{j \in \cI_S} \cU(S^\vee) \Big ) \Big ). 
$$  
\end{corollary}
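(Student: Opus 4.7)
The plan is to reduce to Theorem \ref{mainncdivsod} and then essentially repeat the argument of Corollary \ref{ncmotdirectsum}, with extra bookkeeping to collapse each $\cU(\widetilde D_J)$ to a finite sum of motives of the form $\cU(S^\vee)$ for $S\in S_D^*$. The first step is to apply Lemma \ref{kflinf} to identify $\cU((X,D)_{\mathrm{Kfl}})\simeq\cU(\radice[\infty]{(X,D)})$. Then Theorem \ref{mainncdivsod} provides a psod of $\Perf(\radice[\infty]{(X,D)})$ whose pieces, after fully expanding the nested psod-s, are exactly: one copy of $\Perf(X)$, finitely many pieces $\Perf(S^\vee)$ arising from the psod of $\cA_\varnothing^!=\Perf(\widetilde X)$ (given by Lemma \ref{adaptedcarsod} applied to $\widetilde X$ viewed as the total space of the iterated blow-up $\widetilde\pi$), and for each $J\in S_{\widetilde I}^*$ and each $\chi\in(\mathbb{Q}/\mathbb{Z})_J^*$, finitely many pieces $\Perf(S^\vee)$ from the psod of $\cA_\chi^{J,!}\simeq\Perf(\widetilde D_J)$ supplied by Lemmas \ref{strataareadapted} and \ref{adaptedcarsod}.

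Next, as in the proof of Corollary \ref{ncmotdirectsum}, I would realize $\Perf(\radice[\infty]{(X,D)})$ as the filtered colimit of the fully faithful tower $\Perf(\radice[n!]{(\widetilde X,\widetilde D)})$ (via Proposition \ref{prop.db.irs} and Proposition \ref{excisionpara}), each of which carries a finite psod whose expanded factors are of the above form. Applying Lemma \ref{motsplitlem} at each finite stage and passing to the colimit (using that $\cU$ commutes with filtered colimits) yields an equivalence
\[
\cU(\radice[\infty]{(X,D)})\simeq \cU(X)\oplus\bigoplus_{S\in S_D^*}\bigoplus_{j\in\cI_S}\cU(S^\vee),
\]
where $\cI_S$ records the number of appearances of $\Perf(S^\vee)$ across all the factors enumerated above.

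The final and only non-trivial point is to check that each $\cI_S$ is countably infinite. Countability is automatic: the set of factors is indexed by the countable set $\{\varnothing\}\sqcup\bigsqcup_{J\in S_{\widetilde I}^*}(\mathbb{Q}/\mathbb{Z})_J^*$ with finitely many pieces contributed by each index. For the lower bound, the main obstacle is to verify that for every stratum $S\in S_D^*$, the summand $\Perf(S^\vee)$ actually appears in the psod of $\Perf(\widetilde D_{J_0})$ for at least one $J_0\in S_{\widetilde I}^*$; once this is established, the countably infinite indexing set $(\mathbb{Q}/\mathbb{Z})_{J_0}^*$ forces $|\cI_S|=\aleph_0$. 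To see this, I would choose $\widetilde S\in S_{\widetilde I}^*$ with $\widetilde\pi(\widetilde S)=S$ (any such stratum will do, existence being ensured by surjectivity of $\widetilde\pi$ on strata from the construction in Section \ref{secstrictify}) and inspect Definition \ref{auxiliarystacks}: in both the base and the inductive cases of the ``type $Z_i$'' definition, the first factor $Y_\alpha$ is a disjoint union of copies of $S^\vee$, so tracing through the projective bundle formula and the Orlov blow-up formula in the proof of Lemma \ref{adaptedcarsod} produces at least one $\Perf(S^\vee)$ factor in the psod of $\Perf(\widetilde D_{J_0})$ where $J_0$ is the index of $\widetilde S$. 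This closes the argument.
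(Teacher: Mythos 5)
Your argument is correct and follows precisely the route the paper indicates but declines to write out: pass to the strictification $(\widetilde X,\widetilde D)$ via Proposition \ref{excisionpara}, take the filtered colimit of the finite-level psod-s as in Corollary \ref{ncmotdirectsum}, and then collapse each $\Perf(\widetilde D_J)$ (and $\Perf(\widetilde X)$) to a finite sum of $\cU(S^\vee)$'s using Lemmas \ref{strataareadapted} and \ref{adaptedcarsod}. The paper gives no proof of its own, merely noting that ``this is straightforward but \dots involves messy combinatorics,'' so you are supplying exactly what was intended.

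The one place your write-up is thinner than it should be is the verification that $|\cI_S|$ is \emph{infinite} for every $S\in S_D^*$, which you correctly identify as the nontrivial point. You invoke two facts: (i) $\widetilde\pi$ hits every positive-codimension stratum $S\in S_D^*$ by a stratum $\widetilde S\in S_{\widetilde I}^*$, and (ii) the adaptedness chain of Definition \ref{auxiliarystacks} for such a $\widetilde S$ has $S$ (not some other stratum) at its bottom, so $\Perf(S^\vee)$ genuinely occurs as a factor of $\Perf(\widetilde S)$. Neither is stated in Section \ref{secstrictify} --- the paper only asserts that $\widetilde\pi$ maps strata to strata, not surjectively, and Definition \ref{auxiliarystacks} allows the bottom stratum $S'$ to be an arbitrary element of $S_D$. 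Both claims are true: simple strata have strict transforms mapping onto them, each non-simple stratum eventually becomes a blow-up center whose exceptional divisor is a codimension-one stratum of $(\widetilde X,\widetilde D)$ surjecting onto it, and the composite $\widetilde S\to Y_\alpha\to S'$ produced by unwinding Lemma \ref{strataareadapted} is $\widetilde\pi|_{\widetilde S}$, forcing $S'=S$. But since this is exactly the combinatorics the paper punts on, a complete proof should spell it out rather than cite the section.
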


By the universal property of $\cU,$ Corollary  \ref{ncmotdirectsum} and \ref{ncmotdirectsumII} 
imply uniform direct sum decompositions across all additive invariants. As the case of algebraic K-theory is especially important we formulate it explicitly in the following corollary: this generalizes Hagihara and Nizio{\l}'s  as we drop the simplicity assumption on $D$, $X$ can be a stack,  and $X$ need not be regular away from $D$.   
 
\begin{corollary}
\label{ncmotdirectsum1}
\mbox{}
\begin{itemize}
\item 
Let $(X, D)$ be a log stack given by an algebraic stack $X$ equipped with a simple normal crossings divisor 
$D.$  
Then there is a direct sum decomposition of spectra \begin{equation}
\label{formula1234}
K((X,D)_{\mathrm{Kfl}}) \simeq K(X) \bigoplus \Big ( \bigoplus_{S \in S_D^*} \Big ( \bigoplus_{\chi \in (\mathbb{Q}/\mathbb{Z})_S^*} K(S) \Big ) \Big )   .
\end{equation}
\item Assume that the ground ring $\kappa$ is a field of characteristic $0$. Let $(X, D)$ be a log stack given by an algebraic stack $X$ equipped with a normal crossings divisor 
$D.$  
Then there is a direct sum decomposition of spectra
$$
K((X,D)_{\mathrm{Kfl}}) \simeq K(X) \bigoplus \Big ( \bigoplus_{S \in S_D^*} \Big ( \bigoplus_{j \in \cI_S} K(S^\vee) \Big )\Big )  .$$ 
\end{itemize}
\end{corollary}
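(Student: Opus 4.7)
The plan is essentially a formal application of the universal property of non-commutative motives, since the heavy lifting has already been done at the level of $\mathrm{Mot}^{\mathrm{add}}$ in Corollaries \ref{ncmotdirectsum} and \ref{ncmotdirectsumII}. First I would recall that non-connective algebraic K-theory $K(-)\colon \mathrm{Cat}^{\mathrm{perf}}_{\infty,\kappa}\to \cS_\infty$ is an additive invariant (this is the paradigmatic example, as noted in the introduction and in Section \ref{dgchern}, and established in \cite{BGT}). Consequently, by the universal property of $\cU$ recorded in Section 5 of the text, there is a unique colimit-preserving functor $\overline{K}\colon \mathrm{Mot}^{\mathrm{add}} \to \cS_\infty$ such that $K \simeq \overline{K}\circ \cU$.

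Next I would simply apply $\overline{K}$ to the direct sum decompositions of non-commutative motives provided by Corollaries \ref{ncmotdirectsum} and \ref{ncmotdirectsumII}. Since $\overline{K}$ preserves all (small) colimits and in particular arbitrary direct sums of spectra are colimits, the decomposition
$$
\cU((X,D)_{\mathrm{Kfl}}) \simeq \cU(X) \bigoplus \Big ( \bigoplus_{S \in S_D^*} \bigoplus_{\chi \in (\mathbb{Q}/\mathbb{Z})_S^*} \cU(S) \Big )
$$
of Corollary \ref{ncmotdirectsum} transports term-by-term to the corresponding direct sum decomposition (\ref{formula1234}) in the spectrum category. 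The same argument, applied to Corollary \ref{ncmotdirectsumII}, yields the second assertion for general normal crossings divisors over a field of characteristic zero; here the summands become $K(S^\vee)$ indexed by the countable sets $\cI_S$ produced by the strictification procedure of Section \ref{secstrictify}.

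The only minor point worth mentioning—hardly an obstacle—is that the equivalence $K((X,D)_{\mathrm{Kfl}})\simeq K(\Perf(\radice[\infty]{(X,D)}))$ used implicitly in the statement is provided by Proposition \ref{kflinf}, under the local freeness hypothesis on the log structure (automatic when the log structure comes from a normal crossings divisor). With that identification in hand, applying $\overline{K}$ to Corollaries \ref{ncmotdirectsum} and \ref{ncmotdirectsumII} yields the stated formulas, completing the proof.
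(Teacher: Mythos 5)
Your proposal is correct and follows exactly the paper's own (implicit) argument: the paper states this corollary without a separate proof, simply noting just before it that ``By the universal property of $\cU$, Corollary~\ref{ncmotdirectsum} and~\ref{ncmotdirectsumII} imply uniform direct sum decompositions across all additive invariants,'' and you have spelled out precisely that mechanism—factoring $K$ through $\cU$ via the colimit-preserving $\overline{K}$ and applying $\overline{K}$ to the motivic direct sums, using Proposition~\ref{kflinf} for the identification $\Perf((X,D)_{\mathrm{Kfl}})\simeq \Perf(\radice[\infty]{(X,D)})$.
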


\begin{remark}\label{rmk:ket}
The previous result has an analogue for the Kummer \'etale topos of $(X,D)$, parallel to the second part of the statement of Theorem 1.1 of \cite{Ni1} and the Main Theorem of \cite{hagihara}. In characteristic zero there is no difference, so this comment is relevant only if $\kappa$ has positive or mixed characteristic, and, assuming that $D$ is equicharacteristic as in  \cite{Ni1}, $\bQ/\bZ$ has to be replaced by $(\bQ/\bZ)'=\bZ_{(p)}/\bZ$ (where $p$ is the characteristic over which $D$ lives) in the formulas above.
 
This analogous formula for the Kummer \'etale K-theory follows from our methods, starting from the analogue of Proposition \ref{kflinf} for the Kummer \'etale site and a restricted version $\radice[\infty']{(X,D)}$ of the infinite root stack, where we take the inverse limit only of root stacks $\radice[r]{(X,D)}$ where $r$ is not divisible by $p$.  
 This statement in turn follows from the same argument used in the proof of \ref{kflinf}, after proving Theorem 6.16 and Corollary 6.17 of \cite{TV} for the Kummer \'etale site and this restricted root stack. We leave the details to the interested reader.
 \end{remark}
 
 \subsection{Log schemes with simplicial log structure} \label{sec:simplicial.applications}  
 
 Let $D$ be a divisor with simplicial singularities in an algebraic stack $X$. Consider the associated log stack $(X,D)$ with simplicial log structure and let 
 $\radice[\mathcal{F}]{(X,D)}$ be its canonical root stack, which  is of the form $(X',D')$, where $D'$ is a normal crossings divisor on $X'$. 
 Our techniques allow us to derive a decomposition formula for the Kummer flat K-theory of  $X$ in terms of the geometry of $(X',D')$. In fact, we can formulate two such results.  
 
 Let $S_{I'}$ be the preorder of strata of $(X',D')$. By Theorem \ref{maininfpsodsimpli} $\Perf((X,D)_{\mathrm{Kfl}})$ carries a canonical psod of type $S_{I'},$ and this yields a decomposition of the noncommutative motive $\cU((X,D)_{\mathrm{Kfl}}).$  In particular, we obtain an equivalence of spectra  
\begin{equation}
\label{KKalg}
K((X,D)_{\mathrm{Kfl}}) \simeq K((X',D')_{\mathrm{Kfl}})
\simeq K(X') \bigoplus \Big ( \bigoplus_{J \in S_{I'}^*} \Big ( \bigoplus_{\chi \in (\mathbb{Q}/\mathbb{Z})_{J}^*} K(S') \Big ) \Big ).
\end{equation}  Under some additional assumptions on 
 $(X,D)$ however, we can do better. We can refine (\ref{KKalg}) to a second decomposition formula for the (\emph{complexified}) Kummer flat K-theory of $(X,D)$ which is formulated in terms of the  \emph{G-theory} of the strata of $X$ determined by the divisor $D$ via the associated log structure. We do this in Proposition \ref{propKKalg} below.

We will make use of results proved in \cite{krishna2017atiyah}. 
Let $(X, D)$ be a log scheme 
where $D$ has simplicial singularities. We assume that \begin{itemize}
\item[($\star$)]
$\kappa=\bC$, $X$ is quasi-projective, and $(X, D)$ has a global chart $X\to [\Spec \bC[P]/D(P^\gp)]$ for a simplicial monoid $P$, which is a smooth morphism. 
\end{itemize}
This implies that the canonical root stack  $\radice[\cF]{(X,D)}$  
is a   quotient stack $[(Y,E)/G]$ where $Y$ is a smooth quasi-projective scheme, $E\subset Y$ is a simple normal crossings divisor and $G$ is a finite group acting on the pair $(Y,E)$. Also, $(X,D)$ is obtained by taking the coarse quotient for the action of $G$. We denote $\radice[\cF]{(X,D)}$ by $(X',D')$, where $X'=[Y/G]$ and $D'$ is the induced simple normal crossings divisor $[E/G]$.
In particular, $X'$ is smooth and has a quasi-projective coarse moduli space.

Let $L$, $I'$ and $I$ be, respectively, the set of irreducible components of the divisors 
$ \, 
E \subset Y, \, D' \subset X', \,$ and $D \subset X$. As usual we denote the corresponding sets of strata by 
$S_L$, $S_{I'}$ and $S_I$. There is a canonical bijection between the sets $S_{I'}$ and $S_{I}$. The group  
$G$ acts on  $ S_L,$ and there is a map  
$
 p\colon S_L \to S_L/G \cong S_{I'} \cong S_{I} 
 $
  induced by the quotient 
 $Y \to [Y/G] \simeq X'.$ 
 Let $F$ be the \emph{disjoint} union of the sets of irreducible components of the fixed loci  $Y^g \subset Y,$ as $g$ ranges over $G \setminus \{1_G\}.$  The  fixed loci are  strata of $Y$, and this gives a map $F \to S_L$. In general this  is not an injection, as the same stratum of 
 $Y$ might appear more than once in $F$  if it is   fixed by  several distinct  group elements.

 If $U$ is a  stratum  of $Y$ we introduce the following notations,  \begin{itemize}
 \item  $
 F_U := \{T \in F \mid U \subseteq T \} \subseteq F ,
 $
 \item $
 (\mathbb{Q}/\mathbb{Z})_{F_U}^* := (\mathbb{Q}/\mathbb{Z})_U^* 
 \coprod \Big ( \coprod_{T \in F_U} (\mathbb{Q}/\mathbb{Z})_T^* \Big ) 
 $,  where the index sets on the right hand side are written according to the convention explained in Remark \ref{remremnotation} : that is, they are labeled by strata, rather than by subsets of $L$. We will follow this convention throughout Section \ref{sec:simplicial.applications}. 
 \end{itemize}
We extend this to $X$ using the map 
$p\colon S_E \to S_D.$ Namely, if $S$ is in $S_D$ we set:  
\begin{itemize}
 \item  $
 F_S := \coprod_{U \in 
 p^{-1}(S)}   p(F_U),    
 $  $F_S $ is the \emph{disjoint} union of the sets $p(F_U)$,
 \item $
 (\mathbb{Q}/\mathbb{Z})_{F_S}^* := (\mathbb{Q}/\mathbb{Z})_S^* 
 \coprod \Big ( \coprod_{T \in F_S} (\mathbb{Q}/\mathbb{Z})_T^* \Big ) .
 $
 \end{itemize}
 If $X$ is an algebraic stack, in the statement of Proposition \ref{propKKalg}, and throughout its proof, we denote  $G_i(X)$ the $i$-th G-theory group of $X$ with \emph{complex coefficients}, i.e.
 $
 G_i(X):=K_i(\mathrm{Coh}(X)) \otimes \mathbb{C}.
 $ 
\begin{proposition}
\label{propKKalg}
 Let $(X, D)$ be a log scheme given by a divisor $D$ with simplicial singularities, satisfying   assumption  \emph{(}$\star$\emph{)}.    
Then for all $i \in \mathbb{N}$ there is a   direct sum decomposition 
\begin{equation}
\label{complex-decompo}
K_i((X,D)_{\mathrm{Kfl}}) \otimes \mathbb{C} \cong G_i(X)  \bigoplus \Big ( \bigoplus_{S \in S_I^*} \Big ( \bigoplus_{\chi \in (\mathbb{Q}/\mathbb{Z})_{F_S}^*} G_i(S)  \Big ) \Big ). 
\end{equation}
\end{proposition}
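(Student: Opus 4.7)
The plan is to combine the psod on $\Perf(\radice[\infty]{(X,D)})$ given by Theorem~\ref{maininfpsodsimpli} with the complexified Atiyah--Segal decomposition for K-theory of smooth Deligne--Mumford quotient stacks proven in \cite{krishna2017atiyah}.

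First, I would start from the equivalence $\Perf((X,D)_{\mathrm{Kfl}})\simeq\Perf(\radice[\infty]{(X',D')})$, where by assumption $(\star)$ the canonical root stack has the form $(X',D')=[(Y,E)/G]$. Applying Theorem~\ref{maininfpsodsimpli} and Lemma~\ref{motsplitlem}, together with the fact that K-theory is an additive invariant that preserves filtered colimits, and then complexifying, yields
$$
K_i((X,D)_{\mathrm{Kfl}})\otimes\bC \;\cong\; K_i(X')\otimes\bC \;\oplus\; \bigoplus_{J'\in S_{I'}^*}\bigoplus_{\chi\in(\bQ/\bZ)_{J'}^*}K_i(D'_{J'})\otimes\bC.
$$
This is exactly equation (\ref{KKalg}) complexified. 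Under the canonical bijection $S_{I'}\cong S_I$, each $D'_{J'}$ is a smooth DM quotient stack of the form $[\widetilde{S}/G]$ with $\widetilde{S}\subseteq Y$ a smooth $G$-invariant closed subscheme whose coarse moduli space is the corresponding stratum $S\in S_I$.

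Next, I would invoke the complexified Atiyah--Segal decomposition of \cite{krishna2017atiyah}. For a smooth quasi-projective $Z$ with an action of a finite group $G$, this takes the form
$$
K_i([Z/G])\otimes\bC \;\cong\; G_i(Z/G)\otimes\bC \;\oplus\; \bigoplus_{[g]\neq 1}G_i(Z^g/C(g))\otimes\bC,
$$
and each summand on the right splits further according to the irreducible components of $Z^g$, i.e.\ according to the elements $T\in F$ supported inside $Z$. Applying this to $Z=Y$ and to each $Z=\widetilde{S}$, and using that under $(\star)$ every irreducible component of $Y^g$ is a stratum of $Y$, one identifies the image in $X$ of each such component with the stratum $p(T)$ of $(X,D)$, which is contained in the stratum $S$ corresponding to $\widetilde{S}$.

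Finally, I would match the resulting summands with the formula (\ref{complex-decompo}). The $[g]=1$ contribution of $K_i(X')\otimes\bC$ gives $G_i(X)$; the $[g]=1$ contributions of the $K_i(D'_{J'})\otimes\bC$, as $\chi$ ranges over $(\bQ/\bZ)_{J'}^*$, supply the summands $\bigoplus_{S\in S_I^*}\bigoplus_{\chi\in(\bQ/\bZ)_S^*}G_i(S)$; and the $[g]\neq 1$ contributions assemble, stratum by stratum, into $\bigoplus_{S\in S_I^*}\bigoplus_{T\in F_S}\bigoplus_{\chi\in(\bQ/\bZ)_T^*}G_i(S)$. The main technical obstacle will be precisely this combinatorial matching: one must verify that the irreducible components of the fixed loci $Y^g$, recorded with multiplicity by $F_S$, correspond bijectively, with the correct multiplicities, to the twisted sectors appearing in the Atiyah--Segal decompositions of the individual $K_i(D'_{J'})\otimes\bC$. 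The explicit quotient description of $(X',D')$ guaranteed by $(\star)$ is essential for making this bookkeeping tractable.
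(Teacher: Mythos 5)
Your proposal matches the paper's proof in both strategy and substance: both start from the complexification of the decomposition~(\ref{KKalg}) (obtained via Theorem~\ref{maininfpsodsimpli}), invoke the Atiyah--Segal decomposition of Krishna--Ravi from~\cite{krishna2017atiyah} to rewrite $K_i(X')\otimes\bC$ and $K_i(D'_{J'})\otimes\bC$ as $G$-theory of coarse spaces of inertia (noting $G$ is abelian here, so your conjugacy-class/centralizer formulation coincides with the paper's sum over all $g\in G$), and then regroup the resulting $G_i(S)$ summands to produce the index set $(\bQ/\bZ)_{F_S}^*$. The combinatorial regrouping you flag as the main obstacle is indeed the crux and is precisely what the paper's proof spends most of its space on (and which the paper itself carries out explicitly only in the simplified affine toric case, stating the general case ``requires some extra care'').
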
 
Proposition \ref{propKKalg} follows from (\ref{KKalg}) and an Atiyah--Segal-type formula  expressing the G-theory of a stack in terms of the G-theory of the coarse moduli of its inertia. The most general version of such a formula in the   literature was obtained  in  \cite{krishna2017atiyah} (and holds over $\mathbb{C}$). The assumptions we 
impose on $(X, D)$ mirror  the assumptions made in \cite{krishna2017atiyah}: they can be relaxed if more general versions 
of the Atiyah--Segal decomposition  will become available in the future. 

\begin{proof}
For simplicity we assume that  $X$ is an affine toric variety with simplicial singularities. Then  $Y=\mathbb{A}^n$,  
and $G$ is a finite group acting torically. 
The proof in the general case is the same, except the book-keeping of the summands on the right-hand side of 
(\ref{complex-decompo}) requires some extra care. 

Throughout the proof, if  $X$ is a stack we denote by
$IX$  the inertia of $X$, and  by 
$\widetilde{IX}$ its coarse moduli space.  For all $i \in \mathbb{N},$ formula (\ref{KKalg}) yields an isomorphism of abelian groups 
\begin{equation}
\label{prelstepeq}
K_i((X,D)_{\mathrm{Kfl}})  \cong K_i(X') \bigoplus \Big ( \bigoplus_{S' \in S_{I'}^*} \Big ( \bigoplus_{\chi \in (\mathbb{Q}/\mathbb{Z})_{S'}^*} K_i(S') \Big ) \Big ) 
\end{equation}
All the strata $S' \in S_{I'}$ are smooth, and thus their G-theory and K-theory are the same. By Theorem 1.1 of \cite{krishna2017atiyah}, for all $i \in \mathbb{N}$ there is an isomorphism  
$$K_i(X') \otimes \mathbb{C} = G_i(X')   \cong G_i(\widetilde{IX'})  \cong \bigoplus_{g \in G} G_i(Y^g/G)  = 
G_i(X) \bigoplus \Big ( \bigoplus_{T \in F} G_i(T/G)  \Big ).
$$ 
Similarly, if $S' = [U/G] \in S_{I'}$ is a stratum, we have 
$$  K_i(S') \otimes \mathbb{C} = G_i(S') \cong G_i(\widetilde{IS'}) \cong \bigoplus_{g \in G} G_i(Y^g \cap U /G)= 
G_i(U/G) \bigoplus \Big (\bigoplus_{T \in F} G_i(T \cap U/G) \Big ).$$
Thus, if we complexify formula (\ref{prelstepeq}),  we find that $K_i((X,D)_{\mathrm{Kfl}})  \otimes \mathbb{C}$ is isomorphic to  \begin{equation}
\label{eqeqdecompdecomp}
  G_i(X) \bigoplus \Big  ( \bigoplus_{T \in F} G_i(T/G) \Big ) \bigoplus \Big ( \bigoplus_{U \in S_{L}, U \neq Y} \Big ( \bigoplus_{\chi \in (\mathbb{Q}/\mathbb{Z})_U^*} G_i(U/G) \bigoplus 
\Big ( \bigoplus_{T \in F} G_i(T \cap U/G) \Big ) \Big ) \Big ).  
\end{equation}
The main difference between the statement we need to prove  and the decomposition  (\ref{formula1234}) 
which holds in the simple normal crossings case is that, in general,   the indexing set  $(\mathbb{Q}/\mathbb{Z})_{F_S}^*$ corresponding to a stratum $S \in S_D$ is larger than the indexing set 
$(\mathbb{Q}/\mathbb{Z})_{S}^*$ which appears in 
(\ref{formula1234}).  The reason is that bigger strata  containing $S$ might 
split off extra factors  of the form $G_i(S)$ owing to the Atiyah--Segal decomposition encoded in (\ref{eqeqdecompdecomp}).  
Formula (\ref{complex-decompo}) is then obtained by rearranging the factors on the right-hand side of (\ref{eqeqdecompdecomp}) so as to group  together all factors of the form $G_i(S).$

More precisely, let $S=U/G$ be a stratum  of    $X.$   
Assume that 
 there exists a pair $T \in F,$  $V \in S_E$ such that $U=T \cap V.$   Then the summand of (\ref{prelstepeq}) corresponding to the stratum  $[V/G] \in S_{D'}$ is $
 \bigoplus_{\chi \in (\mathbb{Q}/\mathbb{Z})_{[V/G]}^*} K_i([V/G]) \otimes \mathbb{C}$ and  
can be rewritten as
$$
 G_i(U/G) \bigoplus \Big (
 \bigoplus_{\chi \in (\mathbb{Q}/\mathbb{Z})_V^*} 
 G_i(T  \cap V/G) \Big ) \bigoplus \Big ( \bigoplus_{\chi \in (\mathbb{Q}/\mathbb{Z})_V^*} \Big (
\bigoplus_{T' \in F, T' \neq T} G_i(T' \cap V/G) \Big ) \Big )
 \cong
$$ 
$$
\cong G_i(U/G) \bigoplus  \Big ( \bigoplus_{\chi \in (\mathbb{Q}/\mathbb{Z})_V^*} 
 G_i(S) \Big ) \bigoplus \Big (\bigoplus_{\chi \in (\mathbb{Q}/\mathbb{Z})_V^*} 
\Big (\bigoplus_{T' \in F, T' \neq T} G_i(T' \cap V/G) \Big ) \Big ).
$$
Thus 
it splits  off   
a summand 
$\bigoplus_{\chi \in (\mathbb{Q}/\mathbb{Z})_V^*} 
 G_i(S).$
 Taking into account the contributions coming from all  pairs $T \in F,$  $V \in S_E$ such that $U=T \cap V,$ yields the summand  
 $
 \bigoplus_{\chi \in (\mathbb{Q}/\mathbb{Z})_{F_S}^*} G_i(S)
 $ 
 which appears in (\ref{complex-decompo}). This concludes the proof. 
\end{proof}

\subsection{Logarithmic Chern character}
\label{logchchar}
In this last section we sketch one additional application of our techniques. Namely, we  define a \emph{logarithmic Chern 
character} and  
explain some of its basic properties. We conclude by formulating a Grothendieck-Riemann-Roch statement for the logarithmic Chern character. For simplicity in this section $\kappa$ will be a field of characteristic $0$.

Recall from Section \ref{dgchern} the definition of the Chern character morphism $\mathrm{ch}$ in the setting of $\infty$-categories.

\begin{definition}
\label{deflogchernchar}
Let $X$ be a log algebraic stack. We define the  \emph{logarithmic Chern character}  to be the morphism
$
\mathrm{ch}\colon K(X_{\mathrm{Kfl}}  )\longrightarrow \mathrm{HH}( X_{\mathrm{Kfl}}). 
$
\end{definition}

To emphasize the fact that we are in the logarithmic setting, we will denote the logarithmic Chern character   by $\mathrm{ch}^{\mathrm{log}}.$  
The next statement follows immediately from Corollary \ref{ncmotdirectsum}. 
\begin{proposition}
\label{logcherncomm}
Let $(X, D)$ be a log stack given by 
an algebraic stack $X$ equipped with a simple normal crossings divisor $D.$ Let $I$ be the set of irreducible components of $D$ and denote by $S_I$ the set of strata.  Then there  is a commutative diagram   
$$
\xymatrix{
K((X,D)_{\mathrm{Kfl}}) \ar[r]^-{\mathrm{ch}^{\mathrm{log}}} \ar[d]_-\simeq & \mathrm{HH}((X,D)_{\mathrm{Kfl}}) \ar[d]^-\simeq \\ 
K(X) \bigoplus \Big ( \bigoplus_{S \in S_I^*} \Big ( \bigoplus_{\chi \in (\mathbb{Q}/\mathbb{Z})_S^*} K(S) \Big ) \Big ) \ar[r]^-{\oplus \mathrm{ch}} & \mathrm{HH}(X) \bigoplus \Big ( \bigoplus_{S \in S_I^*} \Big ( \bigoplus_{\chi \in (\mathbb{Q}/\mathbb{Z})_S^*} \mathrm{HH}(S) \Big ) \Big ) }
$$
where $\oplus \mathrm{ch}$ denotes the direct sum of the Chern character maps  
$ \mathrm{ch}\colon K(S) \longrightarrow \mathrm{HH}(S)
$ for $S \in S_I$.
\end{proposition}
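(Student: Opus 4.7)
The plan is to deduce the commutativity of the square from the naturality of the Chern character $\mathrm{ch}\colon K(-) \Rightarrow \mathrm{HH}(-)$, viewed as a natural transformation between additive invariants of $\mathrm{Cat}^{\mathrm{perf}}_{\infty,\kappa}$. The vertical equivalences in the square are themselves instances of the general splitting principle for additive invariants (Remark \ref{splitsplit}) applied to the psod of $\Perf(\radice[\infty]{(X,D)})$ produced in Theorem \ref{mainsgncdiv}.

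First, I would invoke Proposition \ref{kflinf} to identify $\Perf((X,D)_{\mathrm{Kfl}})$ with $\Perf(\radice[\infty]{(X,D)})$, so that Theorem \ref{mainsgncdiv} furnishes a psod of the source category of the Chern character. Since both $K$ and $\mathrm{HH}$ are additive invariants (see Section 10 of \cite{BGT}), Corollary \ref{ncmotdirectsum} applied successively to these two invariants yields the two vertical equivalences. Concretely, for each semi-orthogonal factor $\cA_\chi^{J,!} \simeq \Perf(S)$ with $S$ a stratum, the admissible inclusion $\iota_{J,\chi}\colon \cA_\chi^{J,!} \hookrightarrow \Perf((X,D)_{\mathrm{Kfl}})$ induces maps $(\iota_{J,\chi})_*\colon \mathrm{H}(S) \to \mathrm{H}((X,D)_{\mathrm{Kfl}})$ for $\mathrm{H} \in \{K, \mathrm{HH}\}$, and the equivalence is the direct sum (over $(J,\chi)$, including the $J = \varnothing$ factor, which produces the summand $\mathrm{H}(X)$) of these maps.

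Next, the commutativity of the square reduces to the claim that, for each admissible inclusion $\iota_{J,\chi}$, the naturality square
$$
\xymatrix{
K(\cA_\chi^{J,!}) \ar[r]^-{\mathrm{ch}} \ar[d]_-{(\iota_{J,\chi})_*} & \mathrm{HH}(\cA_\chi^{J,!}) \ar[d]^-{(\iota_{J,\chi})_*} \\
K((X,D)_{\mathrm{Kfl}}) \ar[r]^-{\mathrm{ch}^{\mathrm{log}}} & \mathrm{HH}((X,D)_{\mathrm{Kfl}})
}
$$
commutes, which is immediate from the naturality of $\mathrm{ch}$ applied to the morphism $\iota_{J,\chi}$ in $\mathrm{Cat}^{\mathrm{perf}}_{\infty,\kappa}$. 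Assembling these squares over all semi-orthogonal factors, and using that $\mathrm{ch}$ is compatible with direct sums of spectra, produces the commutative diagram of the proposition.

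The main conceptual step is the translation of the psod-induced splitting into a splitting compatible with the natural transformation $\mathrm{ch}$; no single step should pose serious difficulty, since the hard work of constructing the psod and proving additive-invariant splittings has already been carried out in Theorem \ref{mainsgncdiv} and Corollary \ref{ncmotdirectsum}. The mild technicality to watch is the passage from the finite root stacks to the infinite root stack, which requires taking a filtered colimit of the corresponding commutative squares; this is harmless because $K$, $\mathrm{HH}$, and the natural transformation $\mathrm{ch}$ all preserve filtered colimits in $\mathrm{Cat}^{\mathrm{perf}}_{\infty,\kappa}$.
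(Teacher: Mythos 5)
Your proof is correct and fills in precisely the argument the paper leaves implicit — the paper's proof is literally the one-liner ``follows immediately from Corollary \ref{ncmotdirectsum}'', and the way it does follow is exactly as you describe: the vertical equivalences come from applying the additive invariants $K$ and $\mathrm{HH}$ to the psod via Lemma \ref{motsplitlem}/Remark \ref{splitsplit} (plus a filtered colimit to handle the infinite index set), and the square commutes by naturality of $\mathrm{ch}$ applied to the admissible inclusions $\iota_{J,\chi}$. This is the same approach; the only cosmetic difference is that the paper would phrase it by factoring $\mathrm{ch}$ through the universal functor $\cU$ to $\mathrm{Mot}^{\mathrm{add}}$ and applying the colimit-preserving comparison $\overline{\mathrm{ch}}\colon \overline{K}\Rightarrow \overline{\mathrm{HH}}$ to the direct-sum decomposition of $\cU((X,D)_{\mathrm{Kfl}})$, whereas you unwind this to the level of individual naturality squares.
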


\begin{definition}
\label{logchernrem1}
Let $(X, D)$ be a log scheme given by a smooth and proper scheme $X$ together with a simple normal crossings divisor $D$. Then we define the \emph{de Rham logarithmic Chern character} $\mathrm{ch}^{\mathrm{log}}_{\mathrm{dR}}$ 
to be the composite
$$
\xymatrix{K_0((X,D)_{\mathrm{Kfl}}) \ar[r]^-{\mathrm{ch}^{\mathrm{log}}} \ar[dr]_ -{\mathrm{ch}^{\mathrm{log}}_{\mathrm{dR}}} & \mathrm{HH}_0 ((X,D)_{\mathrm{Kfl}}) \ar[d]^-\cong \\ 
 & \bigoplus_{k \geq 0}\mathrm{H}_{\mathrm{dR}}^{2k}(X) \bigoplus \Big ( \bigoplus_{S \in S_I^*} \Big ( \bigoplus_{\chi \in (\mathbb{Q}/\mathbb{Z})_S^*} \bigoplus_{k \geq 0}\mathrm{H}_{\mathrm{dR}}^{2k}(S) \Big ) \Big ) }
$$
\end{definition}

\begin{remark} 
\label{logchernrem2}
The morphism $\mathrm{ch}^{\mathrm{log}}_{\mathrm{dR}}$ is closely related to the parabolic Chern character considered in \cite{iyer2007relation}. One  difference is that 
the authors in  \cite{iyer2007relation} work with finite rather than  infinite root stacks.  
\end{remark}
We conclude by stating a Grothendieck--Riemann--Roch theorem for the logarithmic Chern character. We will place ourselves  under quite restrictive assumptions. We will return to the problem of extending this logarithmic GRR formalism to a larger class of log stacks in future work.   
Let $f\colon (Y, E) \longrightarrow (X, D)$ be a strict map of log schemes having the following properties: 
\begin{itemize}
\item the underlying schemes $Y$ and $X$ are smooth and proper, and $E$ and $D$ are simple normal crossings divisors; 
\item the morphism between the underlying schemes $f\colon Y \to X$ is flat and proper.
\end{itemize}
 Let $L$ and $I$ be the irreducible components of $E$ and $D$ and denote by $S_L$ and $S_I$ the sets of strata. 
Note that each stratum $S_Y \in S_L$ is mapped by $f$ to a stratum $S_X \in S_I.$ Further, for each stratum $S_Y \in S_L,$  the classical Grothendieck--Riemann--Roch theorem gives a 
commutative diagram 
\begin{equation}
\label{grrstratum}
\begin{gathered}
\xymatrix{
K_0(S_Y) \ar[r]^-{\mathrm{ch}^{\mathrm{dR}}} \ar[d] & \bigoplus_{k \geq 0}\mathrm{H}_{\mathrm{dR}}^{2k}(S_Y) \ar[d]^-{f_*(-  \wedge \mathrm{Td_{S_Y/S_X}} )} \\ 
K_0(S_X) \ar[r]^-{\mathrm{ch}^{\mathrm{dR}}} & \bigoplus_{k \geq 0}\mathrm{H}_{\mathrm{dR}}^{2k}(S_X) 
}
\end{gathered}
\end{equation}
where  $\mathrm{Td_{S_Y/S_X}}$ is
the Todd class of the relative tangent bundle. 
Taking the direct sum of the vertical morphism  on the right of  (\ref{grrstratum})  over all strata we obtain a morphism 
$$
\xymatrix{ 
 \bigoplus_{k \geq 0}\mathrm{H}_{\mathrm{dR}}^{2k}(Y)    
 \bigoplus \Big ( \bigoplus_{S  \in S_L^*} \Big ( \bigoplus_{\chi \in (\mathbb{Q}/\mathbb{Z})^*_{S }} \bigoplus_{k \geq 0}  \mathrm{H}_{\mathrm{dR}}^{2k}(S ) \Big )  
\ar[d]^-{\bigoplus f_*(-\wedge\mathrm{Td})} \Big ) 
\\ 
\bigoplus_{k \geq 0}\mathrm{H}_{\mathrm{dR}}^{2k}(X)    
 \bigoplus \Big ( \bigoplus_{S  \in S_I^*} \Big ( \bigoplus_{\chi \in (\mathbb{Q}/\mathbb{Z})^*_{S }} \bigoplus_{k \geq 0}  \mathrm{H}_{\mathrm{dR}}^{2k}(S ) \Big )  \Big )
}
$$
which we denote for simplicity   
$\bigoplus f_*(-\wedge \mathrm{Td}),$ dropping the indices from the Todd classes. 
\begin{proposition}
Let $f\colon (Y, E) \longrightarrow (X, D)$ be a map of log schemes satisfying the properties above. Then:
\begin{enumerate}[leftmargin=*]
\item  
There is a commutative diagram in $\cS_\infty$
$$
\xymatrix{
K((Y,E)_{\mathrm{Kfl}}) \ar[r]^-{\mathrm{ch}^{\mathrm{log}}} \ar[d]_-{f_*} & \mathrm{HH}((Y,E)_{\mathrm{Kfl}}) \ar[d]^-{f_*} \\ 
K((X,D)_{\mathrm{Kfl}}) \ar[r]^-{\mathrm{ch}^{\mathrm{log}}} & \mathrm{HH}((X,D)_{\mathrm{Kfl}}).
}
$$
\item There is a commutative diagram of abelian groups
$$
\xymatrix{
K_0((Y,E)_{\mathrm{Kfl}}) \ar[r]^-{\mathrm{ch}^{\mathrm{log}}_{\mathrm{dR}}} \ar[d]_-{f_*} & \bigoplus_{k \geq 0}\mathrm{H}_{\mathrm{dR}}^{2k}(Y)    
 \bigoplus \Big ( \bigoplus_{S  \in S_L^*} \Big ( \bigoplus_{\chi \in (\mathbb{Q}/\mathbb{Z})^*_{S }} \bigoplus_{k \geq 0}  \mathrm{H}_{\mathrm{dR}}^{2k}(S ) \Big ) \Big) \ar[d]^-{\bigoplus f_*(-\wedge\mathrm{Td})} \\ 
K_0((X,D)_{\mathrm{Kfl}}) \ar[r]^-{\mathrm{ch}^{\mathrm{log}}_{\mathrm{dR}}} &\bigoplus_{k \geq 0}\mathrm{H}_{\mathrm{dR}}^{2k}(X)    
 \bigoplus \Big ( \bigoplus_{S  \in S_I^*} \Big ( \bigoplus_{\chi \in (\mathbb{Q}/\mathbb{Z})^*_{S }} \bigoplus_{k \geq 0}  \mathrm{H}_{\mathrm{dR}}^{2k}(S ) \Big )\Big).}
$$
\end{enumerate}
\end{proposition}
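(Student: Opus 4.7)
The plan is as follows. Part (1) is essentially formal: by (\ref{chernchar}), the Chern character $\mathrm{ch}\colon K \Rightarrow \mathrm{HH}$ is a natural transformation between additive invariants on $\mathrm{Cat}^{\mathrm{perf}}_{\infty,\kappa}$, so for any exact functor of $\kappa$-linear $\infty$-categories $F\colon \cC \to \cD$ the resulting square involving $K(F)$, $\mathrm{HH}(F)$, and the Chern characters commutes. Applied to the pushforward $f_*\colon \Perf((Y,E)_{\mathrm{Kfl}}) \to \Perf((X,D)_{\mathrm{Kfl}})$---which exists via Proposition \ref{kflinf} as the colimit over finite root stacks of the classical proper pushforwards $\widetilde{f}_{\vec{r},*}\colon \Perf(\radice[\vec{r}]{(Y,E)}) \to \Perf(\radice[\vec{r}]{(X,D)})$, compatible by base change since strictness of $f$ makes the relevant squares Cartesian---this yields the commutative diagram of (1).

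For part (2), I would reduce to the classical Grothendieck--Riemann--Roch theorem stratum by stratum, using the direct sum decompositions of Corollaries \ref{ncmotdirectsum} and \ref{ncmotdirectsum1}. The key step is to show that under these decompositions $f_*$ splits as a direct sum of the classical proper pushforwards $(f|_{S_Y})_*$ between corresponding strata. Since $f$ is strict, for each stratum $S_Y \subset Y$ corresponding to $J_Y \subseteq L$ the image $f(S_Y)$ is contained in a unique stratum $S_X \subset X$ corresponding to $J_X \subseteq I$, and strictness gives a canonical bijection $J_Y \leftrightarrow J_X$ with $|J_Y|=|J_X|$. The induced map between the associated gerbes $D_{J_Y,\vec{r}_{J_Y}} \to D_{J_X,\vec{r}_{J_X}}$ covers $f|_{S_Y}$ and preserves the $\mu_{J,\vec{r}}$-gerbe structure, hence is compatible with the character decompositions of Lemma \ref{lemma: E}. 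Together with base change along the closed embeddings $i_{J,\vec{r}_J}$ and the root stack projections $g_{\vec{r},\vec{r}_J}$ entering the embeddings $\Phi_{J,\chi}$, this shows that $f_*$ sends $\cA_\chi^{J_Y,!} \simeq \Perf(S_Y)$ to $\cA_\chi^{J_X,!} \simeq \Perf(S_X)$ via $(f|_{S_Y})_*$.

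With this decomposition in hand, I would combine Proposition \ref{logcherncomm} (which identifies $\mathrm{ch}^{\mathrm{log}}_{\mathrm{dR}}$ with the direct sum of stratum-wise de Rham Chern characters), the HKR isomorphism $\mathrm{HH}_0(\Perf(S)) \cong \bigoplus_k \mathrm{H}_{\mathrm{dR}}^{2k}(S)$, and the classical Grothendieck--Riemann--Roch square (\ref{grrstratum}) applied to each proper morphism $f|_{S_Y}\colon S_Y \to S_X$ between smooth schemes (strata of smooth simple normal crossings log pairs are smooth). Taking direct sums over strata assembles these into the logarithmic GRR diagram of part (2).

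The main obstacle will be the careful verification that $f_*$ respects the psod. This reduces to a collection of base change and compatibility statements: pushforward $i_*$ along closed embeddings of strata must commute with pullback along root stack projections $g^*$ (which holds for perfect complexes up to Tor-independence of the Cartesian squares), and the character decomposition of $\mu$-gerbes must be preserved by the induced maps between gerbes (which follows from the matching of bands forced by strictness). Once these compatibilities are verified, the bookkeeping with the indexing sets $(\mathbb{Q}/\mathbb{Z})_J^*$ is routine, following the templates of Sections \ref{sncd} and \ref{ncd}.
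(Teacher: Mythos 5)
Your proposal is correct and follows essentially the same route as the paper's own proof: part (1) via naturality of $\mathrm{ch}$ applied to the pushforward $f_{\infty,*}$ obtained as a colimit of the proper flat pushforwards $f_{r,*}$, and part (2) by showing $f_*$ respects the psod (using strictness to obtain Cartesian squares with the universal Cartier divisors, base change, and compatibility of the character decompositions) and then applying classical GRR stratum by stratum. The only minor divergence is that you carry out the bookkeeping over all strata/components directly, whereas the paper restricts to the case where $D$ and $E$ are irreducible and says the general case is similar; your appeal to Tor-independence is also subsumed in the paper's stronger flatness hypothesis, which guarantees the relevant fiber products are derived fiber products.
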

\begin{proof}
Let us start with the first  statement. We will use  the equivalence $
\Perf((X,D)_{\mathrm{Kfl}}) \simeq \Perf(\radice[\infty]{(X,D)})
$
 from Proposition \ref{kflinf}, and the identifications 
$$
K((Y,E)_{\mathrm{Kfl}}) \simeq K(\radice[\infty]{(Y,E)}), \, \, 
K((X,D)_{\mathrm{Kfl}}) \simeq K(\radice[\infty]{(X,D)}),
$$
$$
\mathrm{HH}((Y,E)_{\mathrm{Kfl}}) \simeq \mathrm{HH}(\radice[\infty]{(Y,E)}), \, \, 
\mathrm{HH}((X,D)_{\mathrm{Kfl}}) \simeq \mathrm{HH}(\radice[\infty]{(X,D)}). 
$$
 Let $
f_r\colon \radice[r]{(Y, E)} \longrightarrow \radice[r]{(X, D)}$  and $ f_\infty\colon \radice[\infty]{(Y, E)} \longrightarrow \radice[\infty]{(X,D)},
$
be the maps between the $r$-th and the infinite root stacks   induced by $f.$ For every $r \in \mathbb{N},$  $f_r$ is   flat and proper (therefore perfect) and thus by \cite[Examples 2.2 (a)]{li-ne} it induces a  push-forward  
$
f_{r,*}\colon \Perf(\radice[r]{(Y, E)}) \longrightarrow \Perf(\radice[r]{(X, D)}) .
$
Taking the colimit over $r$ we obtain the push-forward  
$
f_{\infty,*}\colon \Perf(\radice[\infty]{(Y, E)}) \longrightarrow \Perf(\radice[\infty]{(X, D)}). 
$
Applying $\mathrm{ch}$ to $f_{\infty,*}$ yields the   commutative diagram below, which gives statement $(1)$    
$$
\xymatrix{
K(\radice[\infty]{(Y, E)}) \ar[r]^-{\mathrm{ch}} \ar[d]_-{f_{\infty,*}} & \mathrm{HH}(\radice[\infty]{(Y, E)}) \ar[d]^-{f_{\infty,*}} \\ 
K(\radice[\infty]{(X, D)}) \ar[r]^-{\mathrm{ch}} & \mathrm{HH}(\radice[\infty]{(X, D)}), 
}
$$
Let us consider  the second  statement next. 
For simplicity, we restrict to the case where $D$ and  $E$ are irreducible (and this $f^{-1}(D)=E$, by strictness). The general case is similar. We need to prove  that the push-forward  $f_{\infty,*}$ functor preserves the summands of the psod-s of $\Perf(\radice[\infty]{(Y, E)})$ and $\Perf(\radice[\infty]{(X, D)}).$   As  $f$ is strict the diagram 
\begin{equation}
\begin{gathered}
\label{eqeqfiberfiber}
 \xymatrix{
\radice[\infty]{(Y, E)} \ar[r]^-{g_{\infty,1}} \ar[d]_-{f_\infty} & Y \ar[d]^-{f} \\
  \radice[\infty]{(X, D)} \ar[r]^-{g_{\infty,1}} & X
 }
 \end{gathered}
\end{equation}
is cartesian. Further, $f$ is flat and thus base-change yields a commutative diagram
\[
\xymatrix{ \Perf (Y) \ar[d]_{f_*}  \ar[r]^<<<<<{ g^*_{\infty, 1}}&  \Perf(\radice[\infty]{(Y, E)}) \ar[d]^{f_{\infty,*}}\\
\Perf (X) \ar[r]^<<<<<{g^*_{\infty, 1}} &  \Perf (\radice[\infty]{(X, D)}).
}\]
This shows that $f_{\infty, *}$ maps the semi-orthogonal summand 
$ \Perf (Y) \subset \Perf(\radice[\infty]{(Y, E)})$ to $\Perf(X)$.   Next, let us turn  to the other summands of the sod of $\Perf(\radice[\infty]{(Y, E)}).$ Again, since $f$ is strict,   the   square below is cartesian  
\[
\xymatrix{ E_r \ar[d]_{f_r}  \ar[r]^{ }& \radice[r]{(Y, E)} \ar[d]^{f_r} \\
D_r \ar[r]  & \radice[r]{(X, D)},  }\]
where as usual $E_r$ and $D_r$ denote the universal Cartier divisors of the two root stacks. Note that flatness of $f$ implies flatness of $f_r$, and therefore $E_r$ is also the derived fiber product of the diagram.

Base change yields a commutative diagram
\[
\xymatrix{
\bigoplus_{\chi \in \mathbb{Z}_r}\Perf (E_r) _\chi  \simeq    \Perf (E_r) \ar[d]_-{f_{r,*}}  \ar[r] & \Perf (\radice[r]{(Y, E)}) \ar[d]^{f_{r,*}}  \\
\bigoplus_{\chi \in \mathbb{Z}_r} \Perf (D_r)_\chi  \simeq  \Perf (D_r) \ar[r]^{} &  \Perf   (\radice[r]{(X, D)}).
}
\]
Additionally, for every $\chi \in \mathbb{Z}_r,$ the restriction of $f_{r,*}$ to $(\Perf (E_r) )_\chi$ coincides with $f_*$: more precisely, there is a commutative  diagram 
$$
\xymatrix{
(\Perf (E_r) )_\chi     \ar[d]_-{f_{r,*}}  \ar[r]^-{\simeq} &  \Perf (E)\ar[d]^{f_{*}}  \\
(\Perf (D_r) )_\chi        \ar[r]^-{\simeq} &  \Perf (D).
}
$$
This shows that $f_*$ respects the summands of the sod-s of the $r$-th root stacks given by Proposition \ref{prop: sod}. We are actually interested in the compatibility with the sod-s of $n!$-th root stacks constructed recursively in Proposition \ref{sod!}. Note however that the latter are obtained iterating the construction from Proposition \ref{prop: sod}: thus iterating the argument above also implies that $f_*$ respects the sod given in Proposition \ref{sod!}.

This implies that the push-forward map $
 f_*\colon K((Y,E)_{\mathrm{Kfl}}) 
 \longrightarrow  K((X,D)_{\mathrm{Kfl}})
 $ decomposes as a direct sum of push-forwards along $f,$ which we denote by 
 $\bigoplus f_*,$
\begin{equation}
\label{pushdirectpush} \bigoplus f_*\colon K(Y) \bigoplus \Big ( \bigoplus_{\chi \in (\mathbb{Q}/\mathbb{Z})^*} K( E) \Big ) \longrightarrow K(Y) \bigoplus \Big (\bigoplus_{\chi \in (\mathbb{Q}/\mathbb{Z})^*} K(D) \Big ).
\end{equation} 
Then the second  statement follows by applying the ordinary Grothendiek--Riemann--Roch to each summand in (\ref{pushdirectpush}). 
\end{proof}

\bibliographystyle{plain}
\bibliography{biblio}

\providecommand\noopsort[1]{t}
\begin{thebibliography}{10}

\bibitem{AC}
Dan Abramovich and Qile Chen.
\newblock Stable logarithmic maps to {D}eligne-{F}altings pairs {II}.
\newblock {\em Asian Journal of Mathematics}, 18(3):465--488, 2014.

\bibitem{BFN}
David Ben-Zvi, John Francis, and David Nadler.
\newblock Integral transforms and {D}rinfeld centers in derived algebraic
  geometry.
\newblock {\em J. Amer. Math. Soc.}, 23(4):909--966, 2010.

\bibitem{bergh2016geometricity}
Daniel Bergh, Valery~A Lunts, and Olaf~M Schn{\"u}rer.
\newblock Geometricity for derived categories of algebraic stacks.
\newblock {\em Selecta Mathematica}, 22(4):2535--2568, 2016.

\bibitem{BGT}
A.~Blumberg, D.~Gepner, and G.~Tabuada.
\newblock A universal characterization of higher algebraic {$K$}-theory.
\newblock {\em Geom. Topol.}, 17(2):733--838, 2013.

\bibitem{bondal1990representable}
Alexei~I. Bondal and Mikhail~M. Kapranov.
\newblock Representable functors, {S}erre functors, and mutations.
\newblock {\em Izvestiya: Mathematics}, 35(3):519--541, 1990.

\bibitem{borne-vistoli}
Niels Borne and Angelo Vistoli.
\newblock Parabolic sheaves on logarithmic schemes.
\newblock {\em Advances in Mathematics}, 231(3-4):1327--1363, oct 2012.

\bibitem{cadman}
Charles Cadman.
\newblock Using stacks to impose tangency conditions on curves.
\newblock {\em American Journal of Mathematics}, 129(2):405--427, 2007.

\bibitem{cualduararu2005mukai}
Andrei C{\u{a}}ld{\u{a}}raru.
\newblock The {M}ukai pairing, {II}: the {H}ochschild--{K}ostant--{R}osenberg
  isomorphism.
\newblock {\em Advances in Mathematics}, 194(1):34--66, 2005.

\bibitem{CSST}
David Carchedi, Sarah Scherotzke, Nicol{\`o} Sibilla, and Mattia Talpo.
\newblock Kato-{N}akayama spaces, infinite root stacks, and the profinite
  homotopy type of log schemes.
\newblock {\em Geom. Topol.}, 21(5):3093--3158, 2017.

\bibitem{C}
Qile Chen.
\newblock Stable logarithmic maps to {D}eligne--{F}altings pairs {I}.
\newblock {\em Annals of Mathematics}, 180(2):455--521, 2014.

\bibitem{cisinski2011non}
Denis-Charles Cisinski and Gon{\c{c}}alo Tabuada.
\newblock Non-connective {$K$}-theory via universal invariants.
\newblock {\em Compositio Mathematica}, 147(4):1281--1320, 2011.

\bibitem{collins2010gluing}
John Collins, Alexander Polishchuk, et~al.
\newblock Gluing stability conditions.
\newblock {\em Advances in Theoretical and Mathematical Physics},
  14(2):563--608, 2010.

\bibitem{conrad}
Brian Conrad.
\newblock From normal crossings to strict normal crossings.
\newblock
  \url{http://math.stanford.edu/~conrad/249BW17Page/handouts/crossings.pdf}.

\bibitem{dhillon2015g}
Ajneet Dhillon and Ivan Kobyzev.
\newblock G-theory of root stacks and equivariant k-theory.
\newblock {\em arXiv preprint arXiv:1510.06118}, 2015.

\bibitem{fantechi2010smooth}
Barbara Fantechi, Etienne Mann, and Fabio Nironi.
\newblock Smooth toric {D}eligne-{M}umford stacks.
\newblock {\em Journal f{\"u}r die reine und angewandte Mathematik (Crelle's
  Journal)}, 2010(648):201--244, 2010.

\bibitem{Ga}
Dennis Gaitsgory.
\newblock Ind-coherent sheaves.
\newblock {\em Moscow Mathematical Journal}, 13(3):399--528, 2013.

\bibitem{gaitsgory2017study}
Dennis Gaitsgory and Nick Rozenblyum.
\newblock {\em A study in derived algebraic geometry}, volume~1.
\newblock American Mathematical Soc., 2017.

\bibitem{GS}
Mark Gross and Bernd Siebert.
\newblock Logarithmic {G}romov-{W}itten invariants.
\newblock {\em Journal of the American Mathematical Society}, 26(2):451--510,
  2013.

\bibitem{gross2006mirror}
Mark Gross, Bernd Siebert, et~al.
\newblock Mirror symmetry via logarithmic degeneration data {I}.
\newblock {\em Journal of Differential Geometry}, 72(2):169--338, 2006.

\bibitem{hagihara}
Kei Hagihara.
\newblock Structure theorem of {K}ummer \'etale {$K$}-group.
\newblock {\em $K$-Theory}, 29(2):75--99, 2003.

\bibitem{hagihara2016structure}
Kei Hagihara.
\newblock Structure theorem of {K}ummer \'etale {$K$}-group {II}.
\newblock {\em Documenta Mathematica}, 21:1345--1396, 2016.

\bibitem{hesselholt2003k}
Lars Hesselholt and Ib~Madsen.
\newblock On the k-theory of local fields.
\newblock {\em Annals of mathematics}, 158(1):1--113, 2003.

\bibitem{howell}
Nicholas~I. Howell.
\newblock Motives of log schemes.
\newblock
  \url{https://scholarsbank.uoregon.edu/xmlui/bitstream/handle/1794/22740/Howell_oregon_0171A_11948.pdf?sequence=1},
  2017.

\bibitem{HSS}
Marc Hoyois, Sarah Scherotzke, and Nicol{\`o} Sibilla.
\newblock Higher traces, noncommutative motives, and the categorified {C}hern
  character.
\newblock {\em Advances in Mathematics}, 309:97--154, 2017.

\bibitem{ishii2011special}
Akira Ishii, Kazushi Ueda, et~al.
\newblock The special {M}c{K}ay correspondence and exceptional collections.
\newblock {\em Tohoku Mathematical Journal}, 67(4):585--609, 2015.

\bibitem{ito2017log}
Tetsushi Ito, Kazuya Kato, Chikara Nakayama, and Sampei Usui.
\newblock On log motives.
\newblock Preprint, \href{http://arxiv.org/abs/1712.09815}{arXiv:1712.09815}.

\bibitem{iyer2007relation}
Jaya~NN Iyer and Carlos~T Simpson.
\newblock A relation between the parabolic {C}hern characters of the de {R}ham
  bundles.
\newblock {\em Mathematische Annalen}, 338(2):347--383, 2007.

\bibitem{kato}
Kazuya Kato.
\newblock Logarithmic structures of {F}ontaine-{I}llusie.
\newblock In {\em Algebraic analysis, geometry, and number theory ({B}altimore,
  {MD}, 1988)}, pages 191--224. Johns Hopkins Univ. Press, Baltimore, MD, 1989.

\bibitem{krause2010localization}
Henning Krause.
\newblock Localization theory for triangulated categories.
\newblock In {\em In Triangulated categories, volume 375 of London Math. Soc.
  Lecture Note Ser}. Citeseer, 2010.

\bibitem{krishna2017atiyah}
Amalendu Krishna and Bhamidi Sreedhar.
\newblock Atiyah-{S}egal theorem for {D}eligne-{M}umford stacks and
  applications.
\newblock Preprint, \href{http://arxiv.org/abs/1701.05047}{arXiv:1701.05047}.

\bibitem{kuznetsov2015semiorthogonal}
Alexander Kuznetsov.
\newblock Derived categories view on rationality problems.
\newblock Preprint,
  \href{https://arxiv.org/pdf/1509.09115.pdf}{arXiv:1509.09115}.

\bibitem{leip}
Malte Leip.
\newblock Thh of log rings.
\newblock In L.~Hesselholt and P.~Scholze, editors, {\em Arbeitsgemeinschaft:
  Topological Cyclic Homology}. Mathematisches Forschungsinstitut Oberwolfach,
  2018.

\bibitem{li-ne}
Joseph Lipman and Amnon Neeman.
\newblock Quasi-perfect scheme-maps and boundedness of the twisted inverse
  image functor.
\newblock {\em Illinois J. Math.}, 51(1):209--236, 2007.

\bibitem{lurie2009higher}
Jacob Lurie.
\newblock {\em Higher Topos Theory (AM-170)}.
\newblock Princeton University Press, 2009.

\bibitem{Lu2}
Jacob Lurie.
\newblock {\em Higher Algebra}.
\newblock 2016.
\newblock \url{http://www.math.harvard.edu/~lurie/papers/HA.pdf}.

\bibitem{Ni1}
Wies{\l}awa Nizio{\l}.
\newblock {$K$}-theory of log-schemes. {I}.
\newblock {\em Doc. Math.}, 13:505--551, 2008.

\bibitem{ogus}
Arthur Ogus.
\newblock Lectures on logarithmic algebraic geometry.
\newblock Version from December 18, 2017. To be published by Cambridge
  University Press.

\bibitem{ollsonhh}
Martin Olsson.
\newblock Hochschild and cyclic homology of log schemes.
\newblock Talk available at \url{https://www.youtube.com/watch?v=vPkSZm8DOYk}.

\bibitem{Ols}
Martin Olsson.
\newblock Logarithmic geometry and algebraic stacks.
\newblock {\em Ann. Sci. \'Ecole Norm. Sup. (4)}, 36(5):747--791, 2003.

\bibitem{Ol}
Martin Olsson.
\newblock The logarithmic cotangent complex.
\newblock {\em Math. Ann.}, 333(4):859--931, 2005.

\bibitem{robalo2015k}
Marco Robalo.
\newblock {$K$}-theory and the bridge from motives to noncommutative motives.
\newblock {\em Advances in Mathematics}, 269:399--550, 2015.

\bibitem{rognes2015localization}
John Rognes, Steffen Sagave, and Christian Schlichtkrull.
\newblock Localization sequences for logarithmic topological hochschild
  homology.
\newblock {\em Mathematische Annalen}, 363(3-4):1349--1398, 2015.

\bibitem{R}
Nick Rozenblyum.
\newblock Filtered colimits of $\infty$-categories.
\newblock Preprint,
  \url{http://www.math.harvard.edu/~gaitsgde/GL/colimits.pdf}, 2012.

\bibitem{sala2017hall}
Francesco Sala and Olivier Schiffmann.
\newblock The circle quantum group and the infinite root stack of a curve (with
  an appendix by {T}atsuki {K}uwagaki).
\newblock Preprint, \href{http://arxiv.org/abs/1711.07391}{arXiv:1711.07391}.

\bibitem{satriano2013canonical}
Matthew Satriano.
\newblock Canonical artin stacks over log smooth schemes.
\newblock {\em Mathematische Zeitschrift}, 274(3-4):779--804, 2013.

\bibitem{scherotzke2016logarithmic}
Sarah Scherotzke, Nicol{\`o} Sibilla, and Mattia Talpo.
\newblock On a logarithmic version of the derived {M}c{K}ay correspondence.
\newblock Preprint, arXiv:1612.08961, to appear in Compos. Math., 2018.

\bibitem{stacks-project}
The {Stacks Project Authors}.
\newblock Stacks project.
\newblock \url{http://stacks.math.columbia.edu}, 2018.

\bibitem{tabuada2008}
Gon{\c c}alo Tabuada.
\newblock Higher {$K$}-theory via universal invariants.
\newblock {\em Duke Math. J.}, 145(1):121--206, 10 2008.

\bibitem{talpo2014moduli}
Mattia Talpo.
\newblock Moduli of parabolic sheaves on a polarized logarithmic scheme.
\newblock {\em Trans. Amer. Math. Soc.}, 369(5):3483--3545, 2017.

\bibitem{talpo2017parabolic}
Mattia Talpo.
\newblock Parabolic sheaves with real weights as sheaves on the
  {K}ato--{N}akayama space.
\newblock {\em Adv. Math.}, 336:97--148, 2018.

\bibitem{TV}
Mattia Talpo and Angelo Vistoli.
\newblock Infinite root stacks and quasi-coherent sheaves on logarithmic
  schemes.
\newblock {\em Proc. Lond. Math. Soc.}, 117(5):1187--1243, 2018.

\bibitem{TaV}
Mattia Talpo and Angelo Vistoli.
\newblock The {K}ato–{N}akayama space as a transcendental root stack.
\newblock {\em Int. Math. Res. Not.}, 2018(19):6145--6176, 2018.

\end{thebibliography}

\end{document}